\newcommand{\R}{{\Bbb R}}
\newcommand{\N}{{\Bbb N}}
\newcommand{\C}{{\Bbb C}}
\newtheorem{thm}{Theorem}
\newtheorem{lemma}[thm]{Lemma}
\newtheorem{corollary}[thm]{Corollary}
\newtheorem{proposition}[thm]{Proposition}
\newtheorem{remark}[thm]{Remark}
\newtheorem{example}[thm]{Example}
\newproof{proof}{Proof}
\begin{document}

\begin{frontmatter}

\title{Monotone waves for  non-monotone and non-local  monostable   reaction-diffusion equations} 

\author[a]{Elena Trofimchuk}
\author[b]{Manuel Pinto}
\author[c]{and Sergei Trofimchuk\footnote{Corresponding author.}}
\address[a]{Department of Mathematics II,
National Technical University, Kyiv,  Ukraine
\\ {\rm E-mail: trofimch@imath.kiev.ua}}
\address[b]{Facultad de Ciencias, Universidad
de Chile,  San\-tia\-go, Chile  
\\ {\rm E-mail: pintoj@uchile.cl}}
\address[c]{Instituto de Matem\'atica y F\'isica, Universidad de Talca, Casilla 747,
Talca, Chile \\ {\rm E-mail: trofimch@inst-mat.utalca.cl}}

\begin{abstract}
We propose a criterion for the existence of monotone wavefronts in non-monotone and  non-local  monostable  diffusive equations of the Mackey-Glass type.  This extends  recent results by Gomez {\it et al.} \cite{FGT} proved for  the particular case of equations with local delayed reaction. In addition, we demonstrate  the uniqueness (up to a translation) of obtained monotone wavefront within the class of all monotone wavefronts (such  a kind of conditional uniqueness was recently established  for the non-local KPP-Fisher equation by Fang and Zhao).  Moreover,  we show that if  delayed reaction  is local then this uniqueness actually holds within the class of all wavefronts  and therefore the minimal fronts  under consideration (either pulled or pushed) should be monotone.  Similarly to the case of  the  KPP-Fisher equations, 
our approach is based on the construction of an appropriate  fundamental solution for associated  boundary value problem for linear integral-differential equation. 
\end{abstract}

\begin{keyword}monostable equation\sep
monotone traveling front\sep distributional solution \sep non-monotone reaction \sep non-local interaction \sep existence \sep uniqueness \sep  minimal speed
\MSC[2010] 34K12\sep 35K57\sep 92D25
\end{keyword}

\end{frontmatter}


\section{Introduction and main results}

\paragraph{Introduction} In this work,  we  study the  existence and uniqueness of {\it monotone} wavefronts $u(x,t)= \phi(x+ct)$ for the monostable  delayed non-local reaction--diffusion equation 
\begin{equation}\label{17a} \hspace{0mm}
u_t(t,x) = u_{xx}(t,x)  - u(t,x) + \int_{\R}K(x-y)g(u(t-h,y))dy, \ u
\geq 0, 
\end{equation}
when the reaction term $g:\R_+\to\R_+$ neither is {\it monotone} nor defines a {\it quasi-monotone} functional in the sense of Wu-Zou \cite{wz} or Martin-Smith \cite{MS} and when the 
non-negative kernel $K(s)$  is Lebesgue integrable on $\R$.  Equation (\ref{17a}) is an important object of studies in the population dynamics, see \cite{BY,fzJDE,SEDY,gouss,LZii,ma1,MLLS,MeiI,tz,TAT,WLR,XX,YCW,YZ}. 
Taking formally $K(s)=\delta(s)$, the Dirac delta function,   we obtain the diffusive Mackey-Glass type equation 
\begin{equation}\label{17b} \hspace{0mm}
u_t(t,x) = u_{xx}(t,x)  - u(t,x) + g(u(t-h,x)), \ u
\geq 0,
\end{equation} 
another popular focus  of investigation, see \cite{FGT,IGT,TPT} for more details and references. 

 In the classical case,  when $h=0$, all 
wavefronts to the monostable equation (\ref{17b}) are monotone and, given a fixed admissible wave velocity $c$, all of them  are generated by a unique  front by means of   translations.  The same  monotonicity-uniqueness principle  is  valid  for certain subclasses of equations (\ref{17b}) with  $h >0$ (e.g. when $g$ is monotone  \cite{TPT}) and even  for equations (\ref{17a}) (e.g. when $g$ is a monotone and globally Lipschitzian function, with the Lipschitz constant $g'(0)$,  and when additionally $K(s)=K(-s), \ s \in \R$ \cite{ma1,tz}).  
However, if the reaction term $g$ is non-monotone and non-local,  monotonicity and uniqueness are not longer obligatory front's characteristics.  For example,  \cite{TAT} provides conditions sufficient for  non-monotonicity of wavefronts' profiles for non-local equation (\ref{17a}) with compactly supported kernel $K$.  Co-existence of multiple wavefronts for non-local models is also 
known from  \cite{HK,NPTT}. 
Thus the  question about  the existence and uniqueness of {\it monotone} wavefronts  for the monostable  {\it non-monotone} non-local (or delayed) reaction-diffusion equations  seems to be interesting and timely. In fact,  
recently it called the attention of several researchers. In this regard, the most studied model was the non-local 
KPP-Fisher equation \cite{ZAMP,BNPR,FZ,HK,NPT,NPTT}
\begin{equation}\label{17nl}
u_t(t,x) = u_{xx}(t,x)  + u(t,x)(1-\int_\R K(x-y)u(t,y)dy),
\end{equation}
and its local delayed version \cite{ZAMP,ADN,KO,HTa,GT,FGT,wz} (called  the
diffusive Hutchinson's equation)  
\begin{equation}\label{17} \hspace{5mm}
u_t(t,x) = u_{xx}(t,x)  + u(t,x)(1-u(t-\tau,x)). \end{equation} 
The above cited papers elaborated a complete characterization of models (\ref{17nl}) and (\ref{17}) possessing monotone wavefronts. Moreover, the absolute uniqueness (i.e. uniqueness within the class of all wavefronts) of monotone 
wavefronts to (\ref{17}) and the conditional uniqueness (i.e. uniqueness within the subclass of monotone wavefronts) of monotone 
wavefronts to (\ref{17nl}) was also proved in these works. As we have mentioned, in general, monotone and non-monotone wavefronts can coexist in  (\ref{17nl})  \cite{HK,NPTT}. 

In the case of models (\ref{17a}) and (\ref{17b}) having non-monotone  function $g$,  the  existence of {\it monotone} wavefronts was analyzed only for equation (\ref{17b}) in \cite{FGT}, by the help of the Hale-Lin functional-analytic approach and a continuation argument.  This  method required a detailed
analysis of  a family of  linear differential Fredholm
operators associated with (\ref{17b}).  The discrete Lyapunov functionals of Mallet-Paret and Sell for delayed differential equations were also used  in an essential way. Therefore the task of extension of the approach 
developed in  \cite{FGT} on  non-local equations (\ref{17a})  seems to be quite difficult (if anyhow possible).  Consequently,  the main goal of the present paper is to provide an alternative  
technique allowing to analyse  monotonicity  of wavefronts for non-monotone and non-local  equation (\ref{17a}).  A key feature  of this technique consists in reduction of the wave profile equation for  (\ref{17a})  to a new and non-obvious  convolution equation (see Section \ref{FH}).  The obtained nonlinear  equation is then studied by means of various already established methods (in particular, we use the Berestycki-Nirenberg sliding solution method  as well as  approaches developed in \cite{AGT,SEDY}). 
Remarkably, by weakening restrictions imposed on the birth function $g$,  our technique  improves  the existence criterion of  \cite{FGT}.  Furthermore, it also enables us to prove  the conditional uniqueness of monotone fronts for equations  (\ref{17a}), (\ref{17b}). Note that for non-monotone equations  (\ref{17a}), (\ref{17b}), the uniqueness  problem was only partially  solved in \cite{AGT,fzJDE,FGT,tz} by means of  the Diekmann-Kaper method \cite{AGT,dk}. This, however,  presumes the Lipschitz condition  $|g(s_1)-g(s_2)|\leq g'(0)|s_1-s_2|$ which is sufficient for the absolute uniqueness of each front (either monotone or non-monotone) but excludes from the consideration so-called pushed fronts. These fronts are quite significant  from the ecological point of view, see \cite{STR,TPT} and references therein. 
Our results (Theorems \ref{main2} and \ref{mainPF} below) provide a simple criterion guaranteeing  the absolute uniqueness of monotone wavefronts for (\ref{17b}), including the minimal one
(it doesn't matter whether it is pushed or not).  
It should be noted here that, as the results of \cite{IGT, FGT,TT}  show, in many cases only the slow traveling  fronts of  (\ref{17b}) can be monotone (or eventually monotone) while the increase of  propagation speed might lead to the appearance of  slowly oscillating waves.  This is why the studies of monotonicity of the slow
 traveling fronts to equations (\ref{17a}), (\ref{17b}) is of our primary interest.  On the other hand, 
already starting from the seminal work of Kolmogorov, Petrovskii and Piskunov \cite{KPP},  
the applied importance of the slowest (i.e. minimal or critical) waves  is also well known: in particular, it reflects the fact that the invasion of a new unexplored territory by a single species population is realised with the so-called minimal speed of propagation $c_*$, in the form of the minimal  front $u(x,t)= \phi_*(x+c_*t)$.  Clearly, the geometric properties of the leading edge of  the invasion profile $\phi_*(s),\ s\in \R,$  are quite significant for the description of   transition  from an uninvaded  to invaded state. 

\vspace{1mm}

In the subsequent part of this section, we state the key hypotheses used in the paper  and briefly discuss our main theorems together with a key auxiliary  assertion.   

\vspace{1mm}

\paragraph{Main assumptions}\

\vspace{1mm}

\noindent  {\rm \bf(M)}  $g\in C(\R_+), \ g(s)>0$ for $s>0$,  and the equation $g(s)= s$ has exactly two nonnegative solutions:
$0$ and $\kappa >0$. Moreover, $g$ is differentiable at the equilibria  with $g'(0) >1$ and $g'(\kappa) <0$. 

\vspace{2mm}

\noindent  {\rm \bf(ST)}  $g(s)- g'(\kappa)s \ \mbox{is non-decreasing on } \ [0,\kappa]. 
$
Observe that the last assumption implies the sub-tangency property of $g$ at $\kappa$: $g(s) \leq g(\kappa) + g'(\kappa)(s-\kappa),\ s \in [0,\kappa].$ 

\vspace{2mm}

\noindent  {\rm \bf(K)}  $K\geq 0$ and  $\int_\R K(s)ds=1$. Moreover,  $\int_\R K(s)e^{-\lambda s}ds <\infty$ for 
each $\lambda \in \R$.

\begin{example} If $g$ is differentiable on $[0,\kappa]$ then the hypothesis {\rm \bf(ST)}  amounts to the inequality  {\rm \bf(ST$'$)}:  $g'(s) \geq g'(\kappa)$ satisfied for all  $s \in [0,\kappa]$. For the following non-local version of the popular Nicholson's blowflies diffusive equation 
$$
u_t(t,x) = u_{xx}(t,x)  - \delta u(t,x) + p\int_{\R}K(x-y)u(t-h,y)e^{-u(t-h,y)}dy,  \quad p>\delta >0, 
$$
the  assumptions  {\rm \bf(M)}   and  {\rm \bf(ST$'$)} are equivalent to the inequalities $e < p/\delta \leq e^2$. We note that the bulk of information concerning the Nicholson's diffusive equation is obtained  for a simpler (monotone) case when  $1< p/\delta \leq e$ (cf. the recent works \cite{XX} and \cite{STR-2}). 
\end{example}

\paragraph{Main results: existence} Clearly, $u(t,x)= \phi(x+ct)$ is a front solution of equation  (\ref{17a})  if and only if the profile $y=\phi(t)$ solves the boundary value problem
\begin{equation}\label{yp}
y''(t) -cy'(t)-y(t) + \int_\R K(t-s) g(y(s-ch))=0, \ y(-\infty)=0, \ y(+\infty)=\kappa, \ y(t) \geq 0. 
\end{equation}
For kernel $K$ satisfying  {\rm \bf(K)}, we  will consider the characteristic 
functions 
$$
\chi_0(z)=z^2-cz-1+g'(0) e^{-chz}\int_\R e^{-zs}K(s)ds, \quad z \in \C,$$
$$
\chi_\kappa(z)=z^2-cz-1+ g'(\kappa)e^{-chz}\int_\R e^{-zs}K(s)ds, \quad z \in \C,
$$
associated with the linearizations of (\ref{yp}) at the equilibria $0$ and $\kappa$, respectively. 
We will need the following  three subsets $\mathcal{D}_{0}, \mathcal{D}_{\kappa}, \mathcal{D}_{\frak L} := \overline{\mathcal{D}}_{0} \cap \mathcal{D}_{\kappa}$ of the half-plane $(h,c)\in \R_+\times \R$:
$$
\mathcal{D}_{\kappa}= \left\{(h,c)\in \R_+\times \R:  \chi_\kappa(z) \ \mbox{has at least one positive and one negative simple zeros}\right\}; 
$$ 
$$
\mathcal{D}_{0}= \left\{(h,c)\in \R_+\times \R:  \chi_0(z) \ \mbox{has exactly two positive zeros $\mu_0 <\mu_1$}\right\}.$$
The geometric description  of the open domain $\mathcal{D}_{0}$ is well known and it is  summarised in the following assertion:
\begin{proposition} Assume that $g'(0)>1$. Then for each $h \geq 0$ there exists a unique  $c=c_\#(h)\in \R$ such that the equation $\chi_0(z)=0$ with this $c$ has a unique positive double root. The 
function $c_\#:\R_+ \to \R$ 
is $C^\infty$-continuous and strictly decreasing. Furthermore, 
$\mathcal{D}_{0}$ coincides with the set $\left\{(h,c)\in \R_+\times \R:  c > c_\#(h)\right\}$. \end{proposition}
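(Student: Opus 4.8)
The plan is to treat $\chi_0(\cdot)=\chi_0(\cdot\,;h,c)$ as a real-analytic function of the real variable $z$ and to exploit its \emph{convexity}. Rewriting the last term as $g'(0)e^{-chz}\int_\R e^{-zs}K(s)\,ds=g'(0)\int_\R e^{-z(s+ch)}K(s)\,ds$, I note it is a positive superposition of the convex exponentials $z\mapsto e^{-z(s+ch)}$; since $z^2$ is strictly convex and {\rm \bf(K)} permits differentiation under the integral (all exponential moments being finite), one gets $\chi_0''(z)\ge 2>0$. Hence $\chi_0$ is strictly convex on $\R$, has at most two real zeros, and $\chi_0(z)\to+\infty$ as $z\to\pm\infty$. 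Because $\chi_0(0)=g'(0)-1>0$ by {\rm \bf(M)} and {\rm \bf(K)}, the origin sits strictly above zero, so $\chi_0$ has two distinct positive zeros iff its global minimum is negative and attained at a positive abscissa, which—again using $\chi_0(0)>0$—is equivalent to the single scalar condition $\phi(h,c):=\min_{z\ge0}\chi_0(z;h,c)<0$. Thus $(h,c)\in\mathcal{D}_0\iff\phi(h,c)<0$, a positive double root occurs exactly when $\phi(h,c)=0$, and strict convexity makes that double root unique.

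Fixing $h$ and varying $c$, a direct computation gives $\partial_c\chi_0(z;h,c)=-z\bigl(1+g'(0)h\,e^{-chz}\int_\R e^{-zs}K(s)\,ds\bigr)<0$ for every $z>0$, so on the positive axis $\chi_0$ decreases pointwise as $c$ grows. Since $\chi_0(z)\ge z^2-cz-1$ there, the minimizer stays in a compact set and $\phi(h,\cdot)$ is continuous; the envelope identity $\phi'(c)=\partial_c\chi_0$ (evaluated at the minimizer) shows $\phi$ equals the constant $g'(0)-1>0$ while the vertex of $\chi_0$ is $\le0$, and is strictly decreasing to $-\infty$ once the vertex enters $(0,\infty)$ (the vertex is positive precisely for $c>-g'(0)\bar s/(1+g'(0)h)$, where $\bar s:=\int_\R sK(s)\,ds$). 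Consequently $\phi(h,\cdot)$ vanishes at a single point, which I define to be $c_\#(h)$; by the equivalence above $\mathcal{D}_0=\{(h,c):c>c_\#(h)\}$, and at $c_\#(h)$ the unique minimizer $\mu>0$ is the required positive double root.

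For regularity I would apply the implicit function theorem to $G(\mu,c)=\bigl(\chi_0(\mu;h,c),\ \partial_z\chi_0(\mu;h,c)\bigr)=(0,0)$ at the double-root point, where $\partial_z\chi_0(\mu)=0$. The $(\mu,c)$-Jacobian of $G$ has determinant $-\partial_c\chi_0(\mu)\cdot\partial_{zz}\chi_0(\mu)$, nonzero because $\partial_{zz}\chi_0\ge2>0$ and $\partial_c\chi_0(\mu)<0$ (as $\mu>0$). Since {\rm \bf(K)} makes $\chi_0$ jointly real-analytic in $(z,h,c)$, the implicit function theorem produces $\mu(h)$ and $c_\#(h)$ as $C^\infty$ (indeed analytic) functions, which settles the smoothness claim.

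The remaining and, I expect, most delicate point is strict monotonicity. For $h_1<h_2$, $c>0$, and $z>0$ one has $e^{-ch_2z}<e^{-ch_1z}$, whence $\chi_0(z;h_2,c)<\chi_0(z;h_1,c)$ and therefore $\phi(h_2,c)<\phi(h_1,c)$; inserting $c=c_\#(h_1)$ gives $\phi(h_2,c_\#(h_1))<0$, i.e.\ $c_\#(h_2)<c_\#(h_1)$. \emph{This comparison, however, requires $c_\#(h)>0$}, and that positivity is the crux. Observe that at $c=0$ the function $\chi_0(z;h,0)=z^2-1+g'(0)\int_\R e^{-zs}K(s)\,ds$ is \emph{independent of $h$}, so the sign of $c_\#(h)$ is constant in $h$ and equals the sign of $\min_z\bigl(z^2-1+g'(0)\int_\R e^{-zs}K(s)\,ds\bigr)$. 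I would close the argument by bounding this minimum below: Jensen's inequality yields $\int_\R e^{-zs}K(s)\,ds\ge e^{-z\bar s}$, so for a centered kernel ($\bar s=0$, in particular a symmetric one) we get $z^2-1+g'(0)\ge g'(0)-1>0$, forcing $c_\#(h)>0$, after which the displayed comparison delivers strict decrease. Securing this sign for a possibly skewed kernel—where an effective drift can in principle render the threshold speed nonpositive—is where the genuine difficulty of the monotonicity statement resides.
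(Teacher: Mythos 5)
Your proposal is essentially correct on everything except the monotonicity claim, and there is no self-contained argument in the paper to compare it against: the proposition is delegated wholesale to \cite[Lemma 22]{SEDY} and \cite[Theorem 1.1]{AV}, with the proof-environment containing only these citations and two side remarks. Your direct route --- strict convexity $\partial_z^2\chi_0\ge 2$ (differentiation under the integral being licensed by {\rm \bf(K)}), the value $\chi_0(0)=g'(0)-1>0$, the reduction of membership in $\mathcal{D}_0$ to the scalar condition $\phi(h,c):=\min_{z\ge 0}\chi_0(z;h,c)<0$, the pointwise strict decrease $\partial_c\chi_0(z;h,c)<0$ for $z>0$, the limits $\phi\to g'(0)-1$ as $c\to-\infty$ and $\phi\to-\infty$ as $c\to+\infty$, and the implicit function theorem applied to $(\chi_0,\partial_z\chi_0)=(0,0)$ with Jacobian determinant $-\partial_c\chi_0\cdot\partial_z^2\chi_0\neq 0$ --- correctly delivers the existence and uniqueness of $c_\#(h)$, the identity $\mathcal{D}_0=\{(h,c): c>c_\#(h)\}$, and the $C^\infty$ (indeed analytic) regularity.

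The gap you flag in the last paragraph is genuine, and in fact your own machinery shows it cannot be closed in the stated generality. Differentiate the system $\chi_0(\mu(h);h,c_\#(h))=\partial_z\chi_0(\mu(h);h,c_\#(h))=0$ with respect to $h$; the $\mu'(h)$-term drops out because $\partial_z\chi_0(\mu)=0$, and with $F(z):=\int_\R e^{-zs}K(s)\,ds$ and $\mu=\mu(h)$ one obtains
\[
c_\#'(h)=-\,\frac{c_\#(h)\,g'(0)\,e^{-c_\#(h) h\mu}F(\mu)}{1+g'(0)\,h\,e^{-c_\#(h) h\mu}F(\mu)},
\]
so that $\operatorname{sign} c_\#'(h)=-\operatorname{sign} c_\#(h)$. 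Combined with your observation that $\operatorname{sign} c_\#(h)=\operatorname{sign}\phi(h,0)$ is independent of $h$ (the delay enters only through $ch$), this yields a trichotomy: $c_\#$ is strictly decreasing if and only if $\min_{z\ge0}\bigl(z^2-1+g'(0)F(z)\bigr)>0$ (true for $K=\delta$ and, by your Jensen bound, for centered kernels), is identically zero if this minimum vanishes, and is strictly \emph{increasing} if it is negative --- which does occur for sufficiently skewed kernels, e.g.\ $K$ a narrow bump centered at a large $s_0>0$. Since the paper itself records, citing \cite{SEDY}, that $c_\#(h)$ may take negative values, the unconditional ``strictly decreasing'' can only be read with the positivity proviso that \cite{AV} secures under the symmetry $K(s)=K(-s)$. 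So your diagnosis is precise: what is missing is not a technical step in your argument but a hypothesis the statement needs. To finish, either assume $c_\#(h)>0$ (e.g.\ a centered kernel) and run your comparison argument, or record the displayed derivative formula and state the resulting trichotomy instead of unconditional decrease.
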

\begin{proof} For example, see \cite[Lemma 22]{SEDY} and  \cite[Theorem 1.1]{AV}. By \cite{AV} if, in addition, 
$K(s)=K(-s), \ s \in \R,$ then $0< c_\#(h)=O(1/h)$ at $+\infty$. In general, however, $c_\#(h)$ can take negative values, cf. \cite{SEDY}. \hfill $\square$
\end{proof}
It is clear that the finite part of the boundary of $\mathcal{D}_{\frak L}$  consists from the curves 
determined either from the system $\chi_0(z)=0, \ \chi'_0(z)=0$ or from the system $\chi_\kappa(z)=0, \ \chi'_\kappa(z)=0$.
Thus, in each particular case,  the shape of the domains $\mathcal{D}_{\kappa},\ \mathcal{D}_{\frak L}$ can be easily  identified. For instance, if $K(s)$ is the Dirac's delta, then  $\mathcal{D}_{\frak L}$ is a simply connected domain whose boundary contains a non-empty segment of the half-line  $\{h=0, c \geq 0\}$ \cite{FGT}: 
$$ \mathcal{D}_{\frak L} = \{(h,c): h \in [0,h_*], \ h_* \leq +\infty,  \   0< c_\#(h) 
\leq c < c^*(h)\}, $$
where the smooth decreasing function $c^*(h)$ is determined by  $\chi_\kappa(z)$.
The aforementioned characteristics  of  $\mathcal{D}_{\frak L}$ are essential for the use of a continuation argument in \cite{FGT}.  For general kernels $K$, however, 
 the  set  $\mathcal{D}_{\frak L}$ eventually might be more complicated (for instance, not connected). 
One of  advantages of our present approach is that  it does not require any connectedness property from  $\mathcal{D}_{\frak L}$:
\begin{thm} \label{mai1}  
Assume  {\rm \bf(M)},  {\rm \bf(K)},  {\rm \bf(ST)} and that $g$ is sub-tangential at the equilibrium $0$: 
$
g(s) \leq g'(0)s, \ \mbox{for all } \ s \in [0,\kappa]$.  Then for each point $(h,c)$ in the closure $\overline{\mathcal{D}}_{\frak L}$ of the set  $\mathcal{D}_{\frak L}$,  
equation (\ref{17a}) has at least one wavefront $u(t,x)= \phi_c(x
+ct)$ with strictly increasing profile $\phi_c(s), \ s \in \R$.
\end{thm}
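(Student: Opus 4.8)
The plan is to linearise the profile problem (\ref{yp}) at the positive equilibrium in a way that makes the remaining nonlinearity monotone, and then to invert the linear part by convolution with a positive fundamental solution. Concretely, set $G(u):=g(u)-g'(\kappa)u$; by {\rm\bf(ST)} the function $G$ is non-decreasing on $[0,\kappa]$, with $G(0)=0$ and $G(\kappa)=\kappa(1-g'(\kappa))$. Transferring the term $g'(\kappa)\int_\R K(t-s)y(s-ch)\,ds$ to the left, (\ref{yp}) becomes $\mathcal L_\kappa y=-\int_\R K(t-s)G(y(s-ch))\,ds$, where
\begin{equation*}
(\mathcal L_\kappa y)(t):=y''(t)-cy'(t)-y(t)+g'(\kappa)\int_\R K(t-s)y(s-ch)\,ds
\end{equation*}
is the integral-differential operator with symbol $\chi_\kappa$ (i.e. $\mathcal L_\kappa e^{zt}=\chi_\kappa(z)e^{zt}$). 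Since $(h,c)\in\mathcal D_\kappa$, $\chi_\kappa$ has one positive and one negative simple real zero and $\chi_\kappa(0)=g'(\kappa)-1<0$; from this I would produce a nonnegative, two-sided exponentially decaying fundamental solution $\mathcal N\ge0$ of $-\mathcal L_\kappa$, satisfying $\int_\R\mathcal N=1/(1-g'(\kappa))$. Convolving, the profile equation becomes the fixed-point problem
\begin{equation*}
y=\mathcal A y,\qquad (\mathcal A y)(t):=\int_\R\mathcal N(t-s)\int_\R K(s-\tau)G(y(\tau-ch))\,d\tau\,ds,
\end{equation*}
and, because $\mathcal N\ge0$, $K\ge0$ and $G$ is non-decreasing, $\mathcal A$ is order-preserving and maps the order interval $[0,\kappa]$ into itself with $\mathcal A0=0$ and $\mathcal A\kappa=\kappa$.

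Next I would trap a non-trivial monotone solution between ordered sub- and super-solutions of $\mathcal A$. The super-solution is $\overline\phi(t):=\min\{e^{\mu_0 t},\kappa\}$, with $\mu_0$ the smaller positive zero of $\chi_0$: the sub-tangency hypothesis $g(s)\le g'(0)s$ on $[0,\kappa]$ gives $G(s)\le(g'(0)-g'(\kappa))s$, and combining this with the identity $-\chi_\kappa(\mu_0)=(g'(0)-g'(\kappa))e^{-ch\mu_0}\widehat K(\mu_0)$ (obtained from $\chi_0(\mu_0)=0$, where $\widehat K(\lambda)=\int_\R K(s)e^{-\lambda s}\,ds$ is finite by {\rm\bf(K)}) yields exactly $\mathcal A(e^{\mu_0\cdot})\le e^{\mu_0\cdot}$, so $\overline\phi$ is a non-decreasing super-solution. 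For the sub-solution I would take the classical monostable choice $\underline\phi(t):=\max\{0,\,e^{\mu_0 t}-q\,e^{\eta t}\}$ with $\mu_0<\eta<\mu_1$ and $q$ large; its sub-solution property rests on the strict gap $\mu_0<\mu_1$ and on a lower Taylor bound $g(s)\ge g'(0)s-Cs^{1+\gamma}$ near $0$ coming from differentiability of $g$ at the origin. Starting the monotone iteration from $\overline\phi$ produces a non-increasing sequence $\mathcal A^n\overline\phi$ that stays above $\underline\phi$ and converges to a non-decreasing fixed point $\phi_c$ with $\underline\phi\le\phi_c\le\overline\phi\le\kappa$ and $\underline\phi\not\equiv0$.

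Since $\mathcal N\in W^{2,1}(\R)$, a bootstrap through the convolution representation shows $\phi_c$ is a bounded classical solution of (\ref{yp}); being non-decreasing and pinned between $0$ and $\kappa$ it has one-sided limits that are fixed points of $g(s)=s$, and the non-triviality forced by $\underline\phi$ gives $\phi_c(+\infty)=\kappa$, while $\phi_c\le\overline\phi\to0$ as $t\to-\infty$ gives $\phi_c(-\infty)=0$. To upgrade non-decreasing to strictly increasing I would invoke the Berestycki--Nirenberg sliding method: comparing $\phi_c(\cdot+\tau)$ with $\phi_c$ for $\tau>0$ and using order-preservation of $\mathcal A$ together with the strong positivity of convolution with $\mathcal N$, one rules out interior contact, whence $\phi_c'>0$.

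The hardest points are two. First, the nonnegativity and correct exponential decay of $\mathcal N$: because the non-local term carries the sign $g'(\kappa)<0$, $\mathcal N$ is a perturbation of the classical Green's function of $y''-cy'-y$, and its positivity — equivalently, the inverse-positivity of $-\mathcal L_\kappa$ — must be secured from the location of the zeros of $\chi_\kappa$ (simple real zeros of opposite sign together with the absence of zeros of $-\chi_\kappa$ on the relevant vertical strip), which is exactly what membership in $\mathcal D_\kappa$ encodes; I expect to follow the fundamental-solution constructions of \cite{SEDY,AGT}. Second, the passage to the closure $\overline{\mathcal D}_{\frak L}$: at boundary points $\chi_0$ may acquire a double positive root ($\mu_0=\mu_1$), which destroys the gap needed for $\underline\phi$, or $\chi_\kappa$ may lose a simple real zero, which degenerates $\mathcal N$. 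I would treat these critical cases by approximation, choosing interior points $(h_n,c_n)\in\mathcal D_0\cap\mathcal D_\kappa$ converging to the boundary point, extracting the monotone fronts $\phi_{c_n}$ built above, normalising by $\phi_{c_n}(0)=\kappa/2$, and passing to the limit via Helly's selection theorem for monotone functions together with the uniform $C^2$ estimates from the convolution representation; the limit is then a monotone front at the boundary point, its strict monotonicity again recovered by sliding.
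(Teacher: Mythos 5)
Your plan is, at its core, the same as the paper's: monotonize the nonlinearity using \textbf{(ST)}, invert the resulting non-local linear operator by a sign-definite fundamental solution, reduce (\ref{yp}) to a monotone convolution fixed-point problem solved by monotone iteration, and reach $\partial\mathcal{D}_{\frak L}$ by approximating with interior points, normalising $\phi_{c_n}(0)=\kappa/2$ and passing to the limit --- this is exactly Theorem \ref{ar} plus Cases I--II of Section \ref{S3}. One structural difference: the paper shifts by $\xi=|g'(\kappa)|+\delta$ rather than your $\xi=|g'(\kappa)|$, so that $g_1(s)=(g(s)+\xi s)/(1+\xi)$ is \emph{strictly} increasing with $g_1'(\kappa)\in(0,1)$; your $G(u)=g(u)-g'(\kappa)u$ is only non-decreasing, so ``strong positivity rules out interior contact'' is not automatic (an order-preserving $\mathcal{A}$ built on a non-strictly-monotone $G$ can admit contact), and you would need a substitute argument --- e.g. that contact $\phi(t_1)=\phi(t_2)$ forces $K*G(\phi(\cdot-ch))$ to be both monotone and periodic, hence constant, contradicting the boundary conditions.

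There are, however, two genuine gaps. First, the positivity of your kernel $\mathcal{N}$ is not something you can import from \cite{SEDY,AGT}: those works invert \emph{local} linear parts (cf. the explicit formula (\ref{wn})), whereas here the linear part retains the non-local delayed term with negative coefficient, and the sign-definiteness of its fundamental solution is precisely the main technical content of the paper (Section \ref{FH}): Lemma \ref{L12} excludes zeros of $\chi(z,\xi)$ from the strip $\overline{\Pi}(\lambda_2,\lambda_{-1})$ beyond the real ones, and Lemma \ref{Le6} proves negativity by a continuation in $\xi\in[0,\xi^*]$ from the explicit case $\xi=0$, via uniform bounds, local boundedness of the sign-change points $T_\pm(\xi)$, a minimum-point argument against equation (\ref{leq}), and a separate treatment of the degenerate double-root case $\xi=\xi^*$ (which is also what makes boundary points of $\mathcal{D}_{\frak L}$ tractable). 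Asserting ``I would produce a nonnegative fundamental solution'' leaves the engine of the whole proof unproved. Second, your sub-solution $\max\{0,\,e^{\mu_0 t}-q e^{\eta t}\}$ requires $g(s)\ge g'(0)s-Cs^{1+\gamma}$ near $0$, and you claim this follows from differentiability at the origin; it does not --- \textbf{(M)} gives only $g(s)=g'(0)s+o(s)$, and a one-sided H\"older bound of the type (\ref{gco}) is an extra hypothesis (imposed in Theorem \ref{main2} precisely because it is not implied by \textbf{(M)}). The paper avoids this by replacing $g_1$ near zero with the exactly-linear truncation $\gamma_n$ and iterating monotonically between $\phi^-(t)=n^{-1}e^{\mu_0 t}(1-e^{\epsilon t})\chi_{\R_-}(t)$ and $\phi_0(t)=n^{-1}e^{\mu_0 t}$ (Remark \ref{RR8}, resting on \cite[Theorem 7]{SEDY}); you would need this or an equivalent device for your iteration to close under the stated hypotheses.
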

As we have  mentioned, the conclusion and the proof of Theorem \ref{mai1} are 
 also valid when $K(s)$ is  the Dirac delta function, i.e. 
 for the local equation  (\ref{17b}).  Thus it is enlightening to compare criterion of front's monotonicity for (\ref{17b}) established in   \cite[Theorem 2.2]{FGT} and  Theorem \ref{mai1}.  These two results almost coincide except for  two important details: $g$ in \cite{FGT} must be more smooth 
 ($C^{1,\gamma}-$continuous  on $[0,\kappa]$)  and  must have a unique critical
point on $(0,\kappa)$. That is,    the unimodal form of $g$ is assumed in \cite{FGT}  instead of the condition {\rm \bf(ST)}. Clearly, even if these both  requirements are fulfilled for the classical population models (Nicholson's blowflies model, hematopoiesis model), they are independent: so that Theorems \ref{mai1} and  \cite[Theorem 2.2]{FGT}  complement each other in the case of local delayed equations.   This comparison also shows that for some models  
(e.g. for equation (\ref{17b})) Theorem \ref{mai1} provides the necessary and sufficient conditions  of front's monotonicity. Indeed, it follows from 
\cite{FGT} that  (\ref{17b}) can not have any monotone front if  $(h,c)$ does not belong to  the closure of the set  $\mathcal{D}_{\frak L}$.

\paragraph{Main results: uniqueness} To prove the conditional uniqueness (up to a translation) of a monotone wavefront,  instead of  the above mentioned  Diekmann-Kaper theory \cite{AGT,dk,XX}, here   we are using 
an alternative approach based on  the sliding solution method developed by Berestycki and Nirenberg, cf.  \cite{TPT}.  This technique was successfully applied in \cite{chen,co,cdm, mazou} to prove the uniqueness of {\it monotone} wavefronts without  imposing  any Lipshitz condition on $g$.  In the present paper, we consider  sliding solutions to prove the following.
\begin{thm}\label{main2} Assume   {\rm \bf(M)}, {\rm \bf(K)} and {\rm \bf(ST)}. In addition,  let $g$ be $C^1$-smooth in some neighborhood of $\kappa$ and there exist $C >0,\ \theta \in (0,1],$ $\delta >0$ such that   
\begin{equation}\label{gco}
\left|g(u)/u- g'(0)\right| \leq Cu^\theta, \quad u\in  (0,\delta].
\end{equation}
Fix some $(c,h)\in  \mathcal{D}_{\frak L}$, and suppose that $u_1(t,x)= \phi(x+ct)$, $u_2(t,x)= \psi(x+ct)$ 
are two monotone traveling fronts of equation  (\ref{17a}). Then $\phi(s)= \psi(s+s_0), \ s \in \R,$ for some $s_0$. 
\end{thm}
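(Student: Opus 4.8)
The plan is to establish the conclusion by the Berestycki--Nirenberg sliding method, comparing $\phi$ with the family of translates $\psi_\tau(\cdot):=\psi(\cdot+\tau)$, $\tau\in\R$. Writing $G(u):=g(u)-g'(\kappa)u$, hypothesis {\rm \bf(ST)} tells us that $G$ is non-decreasing on $[0,\kappa]$, which endows the profile equation (\ref{yp}) with an order-preserving structure once the linear part is split off. Indeed, if $\psi_\tau(s)\geq\phi(s)$ for all $s$ (both profiles taking values in $[0,\kappa]$), then the difference $w:=\psi_\tau-\phi\geq0$ satisfies, by monotonicity of $G$,
\begin{equation*}
w''(t)-cw'(t)-w(t)+g'(\kappa)\int_\R K(t-s)\,w(s-ch)\,ds=-\int_\R K(t-s)\bigl[G(\psi_\tau(s-ch))-G(\phi(s-ch))\bigr]\,ds\leq0,
\end{equation*}
since the right-hand integrand is non-negative. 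This reduces the comparison to a single linear integro-differential inequality to which a maximum-principle argument can be applied.

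First I would record the precise asymptotic behaviour of an arbitrary monotone front at the two ends. At $+\infty$, the $C^1$-smoothness of $g$ near $\kappa$ together with the fact that $(c,h)\in\mathcal{D}_\kappa$ (so $\chi_\kappa$ possesses a negative simple zero) forces $\kappa-\phi$ and $\kappa-\psi$ to decay monotonically to $0$ at the common exponential rate fixed by that negative root. At $-\infty$, the H\"older-type control (\ref{gco}) on $g(u)/u-g'(0)$ is what guarantees, through an Ikehara-type Tauberian argument, that the nonlinear correction is negligible and that every monotone front approaches $0$ along the common leading rate $e^{\mu_0 t}$ determined by the smaller positive zero $\mu_0<\mu_1$ of $\chi_0$. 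Establishing this common rate for \emph{all} monotone fronts at the fixed interior point $(c,h)\in\mathcal{D}_{\frak L}$ is the crucial output of this step, and it is precisely here that (\ref{gco}) and the smoothness near $\kappa$ enter.

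Next I would show that $\psi_\tau\geq\phi$ on all of $\R$ once $\tau$ is sufficiently large. On a large compact interval this follows from strict monotonicity and positivity of the profiles; near $+\infty$ it follows from the common exponential approach to $\kappa$ combined with the sub-tangency built into {\rm \bf(ST)}; near $-\infty$ it follows from the matched exponential decay of the previous step, a large shift $\tau$ making $\psi_\tau$ dominate $\phi$ along the leading edge. I would then define the critical translate
\begin{equation*}
\tau^*:=\inf\{\tau\in\R:\ \psi_\sigma\geq\phi\ \text{on}\ \R\ \text{for all}\ \sigma\geq\tau\},
\end{equation*}
which is finite, and by continuity yields $\psi_{\tau^*}\geq\phi$ everywhere.

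It remains to prove the rigidity $\psi_{\tau^*}\equiv\phi$, whence $\phi(s)=\psi(s+\tau^*)$ and one takes $s_0=\tau^*$. Supposing instead $w=\psi_{\tau^*}-\phi\not\equiv0$, I would apply a strong maximum principle to the displayed inequality to conclude $w>0$ on all of $\R$, and then use the sharp asymptotics at $\pm\infty$ to extract a uniform positive gap that survives a small further decrease of $\tau$ below $\tau^*$, contradicting the minimality of $\tau^*$. The main obstacle I anticipate is exactly this last step: because of the convolution, the term $g'(\kappa)\int_\R K(t-s)w(s-ch)\,ds$ couples distant points and carries the \emph{negative} factor $g'(\kappa)$, so a naive pointwise strong maximum principle is unavailable. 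One must instead exploit the non-negativity and full spread of the kernel $K$ to rule out an interior zero of $w$, and control $w$ on whole neighbourhoods and at infinity simultaneously, adapting the non-local sliding technique of the cited works to the present convolution operator.
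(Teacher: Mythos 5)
Your overall scheme (sliding, matched asymptotics at $-\infty$, rigidity of the critical translate) is the right family of ideas, and you correctly locate where \textbf{(ST)}, the $C^1$-smoothness near $\kappa$, and condition (\ref{gco}) must enter. However, there is a genuine gap, and it sits exactly at the step you flag as the ``main obstacle'': you never resolve it, and your proposed remedy would not work as stated. Splitting off $g'(\kappa)$ leaves the non-local term in your linear inequality with a \emph{negative} coefficient, so at an interior touching point $t_0$ of $w=\psi_{\tau^*}-\phi\geq 0$ one only gets $w''(t_0)\leq |g'(\kappa)|\,K*w(t_0-ch)$, which is no contradiction --- the maximum principle genuinely fails, as you note. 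Your hope of exploiting ``the non-negativity and full spread of the kernel $K$'' is not an argument, and in fact \textbf{(K)} permits compactly supported and even one-sided kernels (the paper explicitly treats $\int_{-\infty}^0K(s)\,ds=0$), so ``full spread'' is not available. The paper's resolution is the opposite splitting: set $\xi=|g'(\kappa)|+\delta$ and move $\xi\int_\R K(t-s)y(s-ch)\,ds$ into the linear operator, so that the remaining nonlinearity $g_1(s)=(g(s)+\xi s)/(1+\xi)$ is \emph{increasing} on $[0,\kappa]$ with $g_1'(\kappa)=\delta/(|g'(\kappa)|+\delta)<1$; then one proves (Lemma \ref{Le6}, the technical heart of Section \ref{FH}) that the fundamental solution $v(t,\xi)$ of the resulting hyperbolic operator is \emph{negative} --- and this is precisely where $(c,h)\in\mathcal{D}_{\frak L}$, i.e.\ the existence of real zeros of $\chi(\cdot,\xi)$ of both signs for $\xi\leq\xi^*$, is used. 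Inverting the linear part converts (\ref{yp}) into the positive-kernel fixed-point equation $\phi=N*g_1(\phi(\cdot-ch))$ with $N=-(1+\xi)K*v>0$, and the comparison (Lemma \ref{l6}) is then run directly on this equation: a vertical shift $a_*=\inf\{a\geq0:\psi+a\geq\phi\}$ combined with the contraction bound $\gamma_1=\max_{[\kappa-\sigma,\kappa+\sigma]}g_1'<1$ and the tail estimate $\kappa\int_Q^{+\infty}N(s-ch)\,ds<a_*(1-\gamma_1)$ replaces the strong maximum principle entirely. Without this inversion mechanism (or a substitute for it), your rigidity step has no proof.

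A second, smaller but real error: you assert that (\ref{gco}) forces \emph{every} monotone front to decay at the common rate $e^{\mu_0 t}$ at $-\infty$. This is false in general --- a key selling point of the paper is that it does \emph{not} exclude pushed-type fronts, whose profiles decay at the faster rate $e^{\mu_1 t}$ (possibly with a factor $|t|$ in the confluent case). The Diekmann--Kaper/Ikehara step (Corollary \ref{coco}) yields only a dichotomy: each profile behaves like $e^{\mu_j t}$ or $|t|e^{\mu_j t}$ with $j\in\{0,1\}$ possibly depending on the profile. That the two given fronts share the \emph{same} $j$ is not automatic; the paper derives it from the comparison lemma itself (if $\phi\sim e^{\mu_1 t}$ decayed faster than $\psi\sim e^{\mu_0 t}$, then Lemma \ref{l6} would give $\psi(t+\tau)>\phi(t)$ for every $\tau\in\R$, and letting $\tau\to-\infty$ yields $\phi\leq 0$, a contradiction). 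So the asymptotic matching you treat as an independent preliminary is in fact logically downstream of the very comparison principle whose proof is missing from your proposal.
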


\begin{remark}\label{R1} As a by-product of the proofs of Theorems \ref{mai1},  \ref{main2}, we obtain the following: 
Assume {\rm \bf(K)} and  {\rm \bf(M)} where $g'(\kappa)\geq 0$ is considered instead of $g'(\kappa)< 0$. If, in addition, $ 
g(s) \leq g'(0)s, \ s \in [0,\kappa]$, and
$g$ is monotone and  satisfies the smoothness conditions of Theorem \ref{main2}, then for each point $(h,c)$ in the closure of the set  $\mathcal{D}_{0}$,  
equation (\ref{17a}) has a unique (up to a translation) monotone wavefront $u(t,x)= \phi_c(x
+ct).$
\end{remark}

We will say that some $c$ is an admissible speed of propagation for (\ref{17a})  (or for (\ref{17b})) if there exists a positive wave solution $u=\phi(x+ct)$ to 
(\ref{17a})  (to (\ref{17b}), respectively) such that $\phi(-\infty)=0$ and $\liminf_{t\to +\infty} \phi(t) >0$. We call such a wave solution semi-wavefront.   As \cite{HK,NPTT} reveals, proper semi-wavefronts and monotone fronts can co-exist in non-local monostable equations. Nevertheless, as 
the next result shows, the statement of Theorem \ref{main2} can be strengthened for the case of local delayed reaction:
\begin{thm} \label{mainPF}Assume   {\rm \bf(M)}, {\rm \bf(K)} and {\rm \bf(ST)}. Then each semi-wavefront  $u = \phi(x+ct)$, $(h,c)\in {\mathcal{D}}_{\frak L},$ for equation (\ref{17b}) is actually a monotone front and therefore it is the unique possible wavefront solution of (\ref{17b}) (up to a translation) propagating with the speed $c$. 
\end{thm}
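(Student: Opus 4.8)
The plan is to prove the substantive half of the statement --- that every semi-wavefront of (\ref{17b}) with $(h,c)\in\mathcal{D}_{\frak L}$ is a monotone front --- and then to read off uniqueness. Fix such a semi-wavefront $\phi$; it is a bounded positive solution of the local profile equation $\phi''-c\phi'-\phi+g(\phi(\cdot-ch))=0$ with $\phi(-\infty)=0$ and $m:=\liminf_{t\to+\infty}\phi(t)>0$. Everything reduces to showing that $\phi$ is non-decreasing, for then a sliding comparison in the spirit of Theorem \ref{main2} (applied to \emph{monotone} solutions of the local equation, where only the structural hypotheses (M) and (ST) are really needed) identifies all monotone fronts of a given speed up to translation; since every wavefront is in particular a semi-wavefront, this yields the asserted uniqueness up to a shift.

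First I would describe the leading edge. Membership $(h,c)\in\overline{\mathcal{D}}_0$ means that the rightmost zeros of $\chi_0$ governing the decay at $-\infty$ are the positive reals $\mu_0<\mu_1$ (merging into a double root on $\partial\mathcal{D}_0$). A positive solution decaying to $0$ cannot be asymptotic to a strictly complex exponential, so an Ikehara/Tauberian argument of the type used in \cite{SEDY,AV} gives $\phi(t)\sim A e^{\mu_0 t}$ (or $A t e^{\mu_0 t}$ on the boundary) with $A>0$ as $t\to-\infty$. In particular $\phi$ and $\phi'$ are strictly positive on a half-line $(-\infty,T_-]$; this is exactly the place where $\overline{\mathcal{D}}_0$, and not merely $\mathcal{D}_\kappa$, enters.

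Next I would upgrade $\phi$ to a genuine front and control its tail. Using (M), (ST) and boundedness, a study of the $\omega$-limit set --- exploiting the subtangency $g(s)\le g(\kappa)+g'(\kappa)(s-\kappa)$ to damp overshoots past $\kappa$ --- should rule out persistent oscillation and force $\phi(+\infty)=\kappa$; since $(h,c)\in\mathcal{D}_\kappa$, the simple negative real zero $\lambda_-$ of $\chi_\kappa$ then governs a one-signed approach $\phi(t)-\kappa\sim B e^{\lambda_- t}$, so that $\phi'$ is again of one sign on some $[T_+,+\infty)$. I expect this step to be the main obstacle: the condition $g'(\kappa)<0$ creates a delayed negative feedback that tends to produce slowly oscillating tails, and it is precisely the interplay of (ST) with $(h,c)\in\mathcal{D}_\kappa$ that has to exclude them.

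The last step propagates one-signedness of $\phi'$ across the whole line. When $g$ is differentiable along the orbit, $v:=\phi'$ solves the linear nonautonomous delay equation $v''-cv'-v+g'(\phi(\cdot-ch))\,v(\cdot-ch)=0$; in general one argues instead with the difference-quotient equation satisfied by $\phi(\cdot+\eta)-\phi(\cdot)$, whose delayed coefficient is the divided difference of $g$ and is bounded below by $g'(\kappa)$ thanks to (ST). Writing either equation as a first-order delay system, I would invoke the discrete, integer-valued Lyapunov functional of Mallet--Paret and Sell --- available here precisely because the reaction is local --- which furnishes a sign-change counter $V$ that is non-increasing in $t$. By the two asymptotic analyses above $V$ vanishes near both $\pm\infty$, hence $V\equiv 0$ on $\R$, so $v$ never changes sign; combined with $\phi'>0$ near $-\infty$ this forces $\phi'\ge 0$ throughout (and $\phi'>0$ by the variational equation), i.e. $\phi$ is a monotone front. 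With monotonicity in hand, the uniqueness conclusion follows as indicated in the first paragraph.
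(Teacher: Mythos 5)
There are two genuine gaps in your plan, and they sit exactly where the paper's proof does its real work. First, your middle step --- ruling out oscillations about $\kappa$ and forcing $\phi(+\infty)=\kappa$ with a one-signed tail --- is only asserted (``should rule out persistent oscillation''), and you yourself flag it as the main obstacle. It is: for $(h,c)\in\mathcal{D}_\kappa$ the delayed negative feedback at $\kappa$ is precisely the mechanism that produces slowly oscillating tails in this class of equations, and no $\omega$-limit-set argument from (M) and (ST) alone is offered that excludes them. The paper never proves tail convergence first; it proves strict monotonicity directly, and $\phi(+\infty)=\kappa$ then comes for free. Second, your sign-propagation step via the Mallet--Paret--Sell integer-valued functional is unsound as stated: monotonicity of that zero-counting functional requires the delayed feedback coefficient to have \emph{constant sign}, whereas here the coefficient $g'(\phi(t-ch))$ (or the divided difference of $g$ for $\phi(\cdot+\eta)-\phi$) is positive near $-\infty$ (since $g'(0)>1$) and negative near $+\infty$ (since $g'(\kappa)<0$); hypothesis (ST) bounds it below by $g'(\kappa)$ but does not fix its sign. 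So $V$ need not be non-increasing along the whole line and the conclusion $V\equiv 0$ does not follow. (Using this machinery also runs against the paper's declared aim of replacing the approach of \cite{FGT}, where such functionals were applied in a carefully controlled setting.)

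The paper's actual argument bypasses both issues. Via Theorem \ref{ar} it writes $\phi = N * g_1(\phi(\cdot-ch))$ with a positive kernel $N$ and monotone $g_1$, which immediately gives $0<\phi(t)<\kappa$; by \cite[Lemma 6]{TT}, $\phi'>0$ on a maximal interval $(-\infty,\sigma)$; assuming $\sigma$ finite (normalized to $0$), Lemma \ref{v3} provides an explicit Riemann--Stieltjes representation of $\phi'$ on $[0,ch]$ that exploits the purely exponential form of $N$ on $\R_-$ from Remark \ref{Re7} --- this, and not a sign-change counter, is where locality of the reaction enters --- and the shape of $N$ (unique maximum at $0$, monotone on each half-line) yields $\phi'\le 0$ on a maximal interval $(0,\sigma_1)$. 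Uniform persistence of semi-wavefronts forces $\sigma_1$ to be finite with $\sigma_1>ch$, and evaluating the profile equation at $\sigma_1$, where $\phi'(\sigma_1)=0$, $\phi''(\sigma_1)\ge 0$ and $\phi(\sigma_1)\le \phi(\sigma_1-ch)$, contradicts $g(s)>s$ on $(0,\kappa)$. Hence $\sigma=+\infty$, $\phi$ is a monotone front, and uniqueness follows by the sliding argument of Section \ref{S4}, as you anticipated in your opening reduction. That reduction, and your description of the leading edge, are fine; the two steps above are the ones that need to be replaced by an argument of this kind.
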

\begin{remark} Fix some $h>  0$ and consider 
$$
{\mathcal{C}}(h) := \{ c \geq 0: \ \mbox{there is  a semi-wavefront for (\ref{17b}) propagating at the velocity\ } c \}.
$$
It is well known (cf. \cite[Theorem 4]{TAT} or \cite[Remark 8]{IGT}) that ${\mathcal{C}}(h) $ contains some infinite subinterval $[c_1(h), +\infty)$ while 
$
c_*(h): =\inf {\mathcal{C}}(h) \geq c_\#(h) \in \R. 
$
It is easy to see that ${\mathcal{C}}(h)$ is closed (cf. \cite[Lemma 26]{GT}) so that $c_*(h) \in {\mathcal{C}}(h)$. 
The number $c_*(h)$ is called the minimal speed of propagation for the monostable model (\ref{17b}). In general, $c_*(h)$ is not {\it linearly determined}, i.e.  ${\mathcal{C}}(h) \not= [ c_\#(h), +\infty)$ \cite{HuM, XX}. The minimal wave is called pushed if  $c_*(h)> c_\#(h)$.   For equation (\ref{17b}) with monotone $g$, a min-max representation for the speed of pushed wavefront can be found in \cite{STR}.

For some models (for example, if $g$ is monotone on $[0,\kappa]$ \cite{LZii,TPT} or if $g$ is sub-tangential at $0$), we have  ${\mathcal{C}}(h)= [c_*(h),+\infty)$.  In addition, if  $g$ is also sub-tangential at $\kappa$, then ${\mathcal{C}}(h)$ for equation (\ref{17b}) can 
be represented as a union of three adjacent intervals  ${\mathcal{C}}(h)= \mathcal{I}_m\cup \mathcal{I}_{so}\cup\mathcal{I}_{sw}$ (some of them can be empty) corresponding to 
velocities of  monotone fronts, slowly oscillating fronts and proper semi-wavefronts, respectively  \cite{FGT,TT}.  If   $g$ is not sub-tangential at $\kappa$, then also non-monotone non-oscillating fronts can appear \cite{IGT}. In 
the important case, when 
$g$ is neither monotone on $[0,\kappa]$ nor sub-tangential at $0$,  the question about the connectedness 
of the set ${\mathcal{C}}(h)$ is largely open at this point of investigation. 
\end{remark}

\paragraph{An auxiliary result} It is convenient to transform the profile equation (\ref{yp})  into a suitable nonlinear convolution equation \cite{dk,wz}
\begin{equation}\label{ftc}
\phi(t)= \int_\R N(t-s) g_1(\phi(s-ch))ds, 
\end{equation}
with appropriate kernel $N \in L_1(\R,\R_+)$ and continuous monostable nonlinearity $g_1:\R_+\to \R_+$. Certainly, $N, g_1$ depend on $c,h, K, g$ and the choice of specific $N, g_1$ depends on the goals of investigation. A correct 
determination of $N$ and $g_1$ may indicate a shortest way in establishing various properties of profiles (including  their existence).  For instance, all above mentioned front's monotonicity criteria for the  
KPP-Fisher equations (\ref{17nl}) and (\ref{17}) were obtained after discovering a satisfactory form of the associated  convolution equation, see \cite{FZ,GT,KO}. Similarly, an important part of this paper is focused 
on reducing equation (\ref{yp}) to the `optimal' convolution equation:  
\begin{thm} \label{ar}  
Assume  {\rm \bf(M)} and   {\rm \bf(K)}.  Then for each point $(h,c)\in \mathcal{D}_{\frak L}$,  there exist $g_1$, 
positive $\delta$ and kernels $N, - v>0$ given by
$$N=-(1+\xi) K*v:= -(1+\xi) \int_\R K(s)v(t-s)ds, \
g_1(s)= \frac{g(s)+\xi s}{1+\xi}, \ \xi:= |g'(\kappa)|+\delta, $$
such that 
the boundary value problem (\ref{yp}) has a solution if and only if 
equation (\ref{ftc}) has a non-negative solution satisfying the boundary conditions of (\ref{yp}).  Furthermore, 
$\int_\R N(s)ds=1$ and  $\int_\R N(s)e^{-\lambda s}ds <\infty$ for all $\lambda$ from some maximal  finite 
interval $(\gamma_l, \gamma_r) \ni \{0\}$. Continuous  function  $v$ is $C^\infty$- smooth on $\R_-$ and $\R_+$ and has a unique minimum point at $t=0$. In fact, $v$ is strictly monotone on $\R_-$ and  $\R_+$ and it is  strictly convex on $\R_-$. 
\end{thm}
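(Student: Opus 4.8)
The plan is to turn the boundary value problem \eqref{yp} into the convolution equation \eqref{ftc} by a Diekmann--Kaper type reduction organised around the modified linear operator whose inverse is convolution with $v$. Fix a (small) $\delta>0$ to be determined, set $\xi=|g'(\kappa)|+\delta$ and write $g(s)=(1+\xi)g_1(s)-\xi s$. Since $g'(\kappa)<0$, hypothesis {\rm \bf(ST)} says $g(s)-g'(\kappa)s=g(s)+|g'(\kappa)|s$ is non-decreasing on $[0,\kappa]$, so $g_1(s)=(g(s)+\xi s)/(1+\xi)$ is in fact strictly increasing there; this monotonicity is the motivation for the particular $\xi$, although it is exploited in the later theorems rather than here. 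Substituting the identity for $g$ into \eqref{yp} and moving the linear part $\xi\,K\ast y(\cdot-ch)$ to the left yields the equivalent relation
\begin{equation}\label{star}
\mathcal{A}[y]:=y''-cy'-y-\xi\,(K\ast y(\cdot-ch))=-(1+\xi)\,(K\ast(g_1\circ y(\cdot-ch))).
\end{equation}
Writing $\hat K(z):=\int_\R e^{-zs}K(s)\,ds$ (entire by {\rm \bf(K)}), the operator $\mathcal{A}$ has symbol $\mathcal{A}[e^{z\cdot}]=p(z)e^{z\cdot}$ with
$$
p(z)=z^2-cz-1-\xi\,e^{-chz}\hat K(z)=\chi_\kappa(z)-\delta\,e^{-chz}\hat K(z),
$$
the last equality using $-\xi=g'(\kappa)-\delta$. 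Thus $v$ should be the fundamental solution $\mathcal{A}[v]=\delta_0$, formally $\hat v=1/p$; convolving \eqref{star} with $v$ and using associativity of $\ast$ then reproduces \eqref{ftc} with $N=-(1+\xi)K\ast v$.

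The analytic heart of the proof, and the step I expect to be hardest, is to make this inversion rigorous: I must produce a genuine $v\in L^1(\R)$ with $\hat v=1/p$, which reduces to showing that, after fixing $\delta>0$ small, $p$ is zero-free on a vertical strip $\Pi=\{\gamma_l<\mathrm{Re}\,z<\gamma_r\}$ with $\gamma_l<0<\gamma_r$. Because $(h,c)\in\mathcal{D}_\kappa$, the function $\chi_\kappa$ has simple real zeros of both signs; since $\chi_\kappa(0)=g'(\kappa)-1<0$ and the nonlocal term is real and positive on the real axis, $\chi_\kappa<0$ between the real zeros of $\chi_\kappa$ nearest $0$ on each side. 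The subtracted term $-\delta\,e^{-chz}\hat K(z)$ is also negative there, so for small $\delta$ the perturbed $p$ still has simple real zeros $\gamma_l<0<\gamma_r$ with $p<0$ in between. To exclude \emph{non-real} zeros inside $\Pi$ I would use the standard real/imaginary-part estimate along vertical lines: on $\mathrm{Re}\,z=x$ one has $|e^{-chz}\hat K(z)|\le e^{-chx}\hat K(x)$ because $K\ge0$, which controls the transcendental term and, together with the dominance of $z^2$, confines the zeros of $p$ in $\Pi$ to the real axis. Given such a strip, $v(t)=\tfrac1{2\pi i}\int_{\sigma-i\infty}^{\sigma+i\infty}e^{zt}p(z)^{-1}\,dz$ (any $\sigma\in(\gamma_l,\gamma_r)$, independent of $\sigma$ by Cauchy's theorem) defines a continuous $v$ with $|v(t)|\le Ce^{\gamma_l t}$ for $t\ge0$ and $|v(t)|\le Ce^{\gamma_r t}$ for $t\le0$, both decaying since $\gamma_l<0<\gamma_r$; hence $v\in L^1(\R)$ and $\mathcal{A}[v]=\delta_0$ by construction.

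The remaining quantitative properties I would read off from the transform. As $\hat N(z)=-(1+\xi)\hat K(z)/p(z)$ is analytic on $\Pi$ and $\hat K$ is entire, the kernel satisfies $\int_\R N(s)e^{-\lambda s}\,ds=\hat N(\lambda)<\infty$ exactly for $\lambda$ in the maximal strip $(\gamma_l,\gamma_r)$, which is finite because $p$ has real zeros at its endpoints; the normalization follows from $\hat K(0)=1$ and $p(0)=-(1+\xi)$, giving $\int_\R N=\hat N(0)=1$. For the sign and shape of $v$ I would shift the inversion contour and collect residues at the real zeros of $p$: the dominant exponentials carry the sign dictated by $\hat v(0)=p(0)^{-1}=-(1+\xi)^{-1}<0$, yielding $-v>0$ and hence $N=(1+\xi)K\ast(-v)>0$. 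Away from the origin $v$ solves the homogeneous equation $v''=cv'+v+\xi\,(K\ast v(\cdot-ch))$, and a bootstrap (each differentiation transfers onto $v$ through the convolution, smoothing the single corner at $0$) gives $v\in C^\infty(\R_-)\cap C^\infty(\R_+)$, while the jump $v'(0^+)-v'(0^-)=1$ from $\mathcal{A}[v]=\delta_0$ pins the unique corner at $0$. Strict monotonicity on each half-line and, above all, strict convexity on $\R_-$ are the most delicate qualitative points; I would extract them from the sign $-v>0$, the positivity of $K$ and the homogeneous equation, arguing that $-v$ is monotone on each ray much as for the corresponding Green's function, with the nonlocal term supplying the convexity on $\R_-$.

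Finally, the equivalence ``\eqref{yp} solvable $\Leftrightarrow$ \eqref{ftc} solvable with the stated boundary behaviour'' follows by making \eqref{star} reversible on bounded functions. If $y$ solves \eqref{yp}, then $y,y',y''$ are bounded, \eqref{star} holds, and convolving with the $L^1$ kernel $v$ (legitimate since $v$ decays exponentially) produces a non-negative solution $\phi=y$ of \eqref{ftc}. Conversely, if $\phi\ge0$ solves \eqref{ftc} with $\phi(-\infty)=0$ and $\liminf\phi>0$, then $\phi=N\ast(g_1\circ\phi(\cdot-ch))$ is as regular as the fundamental solution allows, and applying $\mathcal{A}$ recovers \eqref{star}, i.e.\ \eqref{yp}. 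The two equilibria are preserved, for $g(0)=0$ and $g(\kappa)=\kappa$ give $g_1(0)=0$, $g_1(\kappa)=\kappa$, while $\int_\R N=1$ makes the constants $0$ and $\kappa$ fixed points of the convolution operator; this is exactly what transfers the boundary conditions of \eqref{yp} to \eqref{ftc} and back.
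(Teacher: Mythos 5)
Your reduction is architecturally identical to the paper's: the same $\xi=|g'(\kappa)|+\delta$, the same symbol $\chi(z,\xi)=z^2-cz-1-\xi e^{-chz}\hat K(z)$, the same fundamental solution $v$ and kernel $N=-(1+\xi)K*v$, the equivalence of (\ref{yp}) and (\ref{ftc}) via uniqueness of bounded solutions (the paper's Corollary \ref{com}), and the normalisation $\int_\R N=1$ from $\chi(0,\xi)=-(1+\xi)$. The first genuine gap is in your zero-free strip argument. The estimate $|e^{-chz}\hat K(z)|\le e^{-chx}\hat K(x)$ on $\Re z=x$ rules out non-real zeros only on lines where $|x^2-cx-1|>\xi e^{-chx}\hat K(x)$, i.e.\ \emph{outside} the interval between the innermost real zeros. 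Inside the central strip $\lambda_1(\xi)<x<\lambda_0(\xi)$ --- precisely the strip containing the imaginary axis on which the inversion integral (\ref{ev}) must be taken --- one has $\chi(x,\xi)<0$, so $|x^2-cx-1|<\xi e^{-chx}\hat K(x)$ and the inequality runs the wrong way; the quadratic dominance only yields a uniform bound $|\Im z|\le C$ for zeros there, not their absence. So neither hyperbolicity nor the maximal strip $(\gamma_l,\gamma_r)$ follows from your sketch. The paper closes exactly this hole in Lemma \ref{L12} by a continuation argument in $\xi$: at $\xi=0$ the strip is zero-free by the explicit quadratic, zeros in the strip have uniformly bounded imaginary parts for $\xi\in[0,\xi^*]$, and no zero can cross the boundary lines $\Re z=\lambda_j(\xi)$ (the factorisation estimate $|z-A||z-B|>|x-A||x-B|$ applies there), hence none ever enter.

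The second, more serious gap is global negativity of $v$. Shifting the contour and collecting residues gives $v(t,\xi)=\rho_j(\xi)e^{\lambda_j(\xi)t}+r_j(t,\xi)$ with $\rho_j<0$ (the paper's (\ref{ficoR})), which proves $v<0$ only near $\pm\infty$; and $\hat v(0)=1/\chi(0,\xi)<0$ gives only $\int_\R v<0$, not a pointwise sign. The paper's Lemma \ref{Le6} obtains $v(t,\xi)<0$ for all $t$ by a homotopy in $\xi\in[0,\xi^*]$ anchored at the explicit formula (\ref{wn}) for $\xi=0$: continuity of $v(t,\xi)$ jointly in $(t,\xi)$, a uniform localisation of the possible sign-change points $T_\pm(\xi)$ extracted from the residue asymptotics, and, at the first parameter $\xi_c$ where $v$ touches $0$, a maximum-principle contradiction at a nonzero interior minimum $a_c$: there $v(a_c)<0$, $v'(a_c)=0$, $v''(a_c)\ge0$ and $K*v(a_c-ch)\le0$, which is incompatible with the homogeneous equation $v''-cv'-v-\xi K*v(\cdot-ch)=0$. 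This same interior-minimum argument is what then delivers the unique minimum at $t=0$, the strict monotonicity on $\R_\mp$ and the strict convexity on $\R_-$ --- precisely the points you deferred as ``delicate''. Without this mechanism the positivity of $N$ and the shape statements of the theorem (and hence everything downstream in the existence and uniqueness proofs) remain unproven, so this is the missing idea rather than a routine verification.
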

\begin{remark}\label{Re7} For equation  (\ref{17b}),  a more explicit  form of $N(t)$ can be obtained:  $$ N(t)= -(1+\xi)v(t,\xi),\ \mbox{ where} \ 
v(t,\xi) = -
 \frac{1}{\chi'(\lambda_0(\xi))} \left\{\begin{array}{cc} \tilde u(t), & t\geq 0, \\ e^{\lambda_0(\xi) t }, & t<0, \end{array}\right.
$$
 $\chi(z)=z^2-cz-1-\xi e^{-chz}$,  $\lambda_0(\xi)$ is the unique positive zero of  $\chi(z)$, and $\tilde u(t)$ is the solution of the following initial value problem:
\begin{equation}\label{red}
u''(t) -cu'(t)-u(t) -\xi u(t-ch) =0,
\end{equation}
$$ u(s) = e^{\lambda_0(\xi) s}, \ s \in [-ch,0], \quad  u'(0) = -(\lambda_0(\xi)-c+\xi che^{-\lambda_0(\xi)ch}). 
$$
When $\xi =0$, this formula for the fundamental solution $v(t,\xi) $ for (\ref{red}) is well known, see  (\ref{wn}).  It should be noted that  the explicit exponential form of $v(t)$ for negative $t$ will allow  to prove the monotonicity of all wavefronts under conditions of Theorem \ref{mainPF}.  On the other hand, the one-sided Laplace transform $\hat v(z)=\int_0^{+\infty}e^{-zt}v(t)dt$ of $ v(t)$ can also be easily found: 
$$
\hat v(z)= \frac{1}{\chi(z)} -\frac{1}{\chi'(\lambda_0)}\frac{1}{z-\lambda_0(\xi)}. 
$$
This function is analytic in the half-plane $\{\Re z > \lambda_1(\xi)\}$ where  $\lambda_1(\xi)$ is the biggest negative zero of  $\chi(z)$. As the Laplace transform of the negative function, $-\hat v(x)$, $x \in (\lambda_1(\xi), +\infty)$, provides a new example of completely monotone elementary function, see  \cite{AP,MR,WID}.

\end{remark}
Since the analytical properties of functions $g_1, v$ defined in Theorem \ref{ar} are rather nice (for example, $g_1$ is monotone if   {\rm \bf(ST)} is assumed), the optimality  of their choice for solving our existence/uniqueness problems has to be explained in terms of optimality  of  the set  $\mathcal{D}_{\frak L} = \overline{\mathcal{D}_{0}} \cap \mathcal{D}_{\kappa}$.  In the ideal case, the closure of $\mathcal{D}_{\frak L}$ must contain the set $\frak{M}$ of all pairs $(h,c)$
for which (\ref{yp}) has a unique (up to a shift) monotone solution.  Since it is well known (cf. \cite{SEDY}) that  $\frak{M} \subseteq  \overline{\mathcal{D}_{0}}$, we actually need only to justify   the choice of 
$$
\mathcal{D}_{\kappa}= \left\{(h,c)\in \R_+\times \R:  \chi_\kappa(z) \ \mbox{has at least one positive and one negative simple root}\right\}.
$$  
The necessity of the presence of at least one negative zero of  $\chi_\kappa(z)$ in the definition of $\mathcal{D}_{\kappa}$ for the existence of monotone fronts was proved both for the delayed equation (\ref{17b}) (e.g. see \cite{FGT}) and the non-local equation  (\ref{17a}) (at least when $K$ has compact support, cf. \cite[Theorem 6]{TAT}).  However, the necessity of the presence of at least one positive zero of  $\chi_\kappa(z)$ in the definition of $\mathcal{D}_{\kappa}$ is not so obvious. Here we give the following two arguments in support of  this presence: 
firstly, in some situations (e.g. for equation (\ref{17b})),  at least one positive zero of  $\chi_\kappa(z)$  exists automatically for all 
parameters in $\overline{\mathcal{D}_{0}}$; secondly,  a unique positive zero of $\chi_\kappa(z)$  plays an essential
role  in proof of  the monotonicity criterion  in \cite{FGT} (more precisely, in proofs of surjectivity of associated Fredholm operators, see  Proposition 3.2 and Lemma 3.3 in \cite{FGT}).

Finally, a few words about the organisation of the paper. In the next section, we define and study  properties
of the fundamental solution $v(t,\xi)$. These studies are resumed in Theorem \ref{ar} which is formally proved  in  
Step I  of Sections \ref{S3}. 
The convolution equation (\ref{ftc})  is then used to prove 
Theorems \ref{mai1}, \ref{main2} in Sections \ref{S3} and \ref{S4}, respectively. The proof of Theorem \ref{mainPF} is given in Section \ref{S5}.

\section{Negativity of the fundamental solution.}\label{FH}
\subsection{The fundamental solution: definitions and properties}
\noindent Fix $c,d,\xi \in \R$,    kernel $K(s)$ satisfying {\rm \bf(K)}  and consider the linear  integral-differential  inhomogeneous equation 
\begin{equation}\label{leq}
y''(t) -cy'(t)-dy(t) -\xi \int_\R K(t-s) y(s-ch)ds + f(t) =0,
\end{equation}
where $f:\R \to \R$ is a bounded continuous function and the characteristic function 
$$
\chi(z,\xi)=z^2-cz-d-\xi e^{-chz}\int_\R e^{-zs}K(s)ds, \quad z \in \C,$$
does not have zeros on the imaginary axis (in such a case, we will say that equation (\ref{leq}) is hyperbolic). 
Suppose, for a moment, that $f$ is compactly supported
and that, for this inhomogeneity,  equation  (\ref{leq}) has a solution $y:\R \to \R$ exponentially decaying, together with its first derivative $y'(t)$,  at $\pm\infty$. 
Then, 
applying the bilateral Laplace transformation to (\ref{leq}), 
we find easily that this equation has a solution 
$y(t) = -v*f(s)$,  which is the convolution of $f$ with the  bilateral Laplace inverse $v(t,\xi)$ of  
$1/\chi(\lambda,\xi)$.
Since $y(t)$ is a bounded function,   the formula $y(t) = -v*f(s)$ shows that the  inverse Laplace 
transform should be  applied to $1/\chi(\lambda,\xi)$ considered on the maximal vertical analyticity strip $\Pi(\lambda_l.\lambda_r):=\{z: \lambda_l < \Re z < \lambda_r \}$ that includes the imaginary axis (observe that $\lambda_l <0< \lambda_r$ since  
the imaginary axis does not contain any singular point of $1/\chi(\lambda,\xi)$). 
The function  $v(\cdot,\xi):\R\to \C$ is called the fundamental solution for equation  (\ref{leq}).  The above said and the inversion theorem  imply that 
\begin{equation}\label{ev}
v(t,\xi) = -\frac{1}{2\pi }\int_{-\infty}^{+\infty} \frac{e^{iut}du}{u^2+ci u+d +\xi e^{-iu ch}\int_\R K(s)e^{-ius}ds}, \ t \in \R.
\end{equation}
We view this formula as a formal definition of the fundamental solution for equation  (\ref{leq}). 

\vspace{-1mm}

\begin{lemma} \label{Le1}  Suppose that $\chi(z,\xi)$ does not have pure imaginary zeros. Then $v(\cdot,\xi):\R\to \R$ is a real valued function which is  infinitely differentiable with respect to $t $ on the set $\R\setminus\{0\}$ where it also satisfies equation
(\ref{leq}) with $f(t)\equiv 0$.  Next, there exist the limits $v'(0^-,\xi), v'(0^+,\xi)$ and $v'(0^+,\xi) - v'(0^-,\xi)=1$
(thus the limits $v''(0^-,\xi), v''(0^+,\xi)$ exist and $v''(0^+,\xi) - v''(0^-,\xi)=c$). 
\end{lemma}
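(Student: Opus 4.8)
The plan is to extract every assertion directly from the integral representation (\ref{ev}) by Fourier analysis, supplementing it with the equation (\ref{leq}) itself where the representation is not smooth enough. Write $\hat K(z):=\int_\R K(s)e^{-zs}ds$; by {\rm \bf(K)} this is entire and, on the imaginary axis, $\hat K(iu)=\int_\R K(s)e^{-ius}ds\to 0$ as $|u|\to\infty$ by the Riemann--Lebesgue lemma. Since (\ref{leq}) is hyperbolic, $\chi(iu,\xi)\neq0$ for all real $u$, while $|\chi(iu,\xi)|\sim u^2$ as $|u|\to\infty$; hence $\inf_{u\in\R}|\chi(iu,\xi)|>0$ and $1/\chi(iu,\xi)=O(u^{-2})$. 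Thus the integrand in (\ref{ev}) is absolutely integrable and $v(\cdot,\xi)$ is bounded and continuous. Real-valuedness follows from the symmetry $\overline{\chi(iu,\xi)}=\chi(-iu,\xi)$ (a consequence of $c,d,\xi\in\R$ and $\overline{\hat K(iu)}=\hat K(-iu)$): conjugating (\ref{ev}) and substituting $u\mapsto-u$ reproduces the same integral, so $\overline{v}=v$.

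Because $\hat K(iu)$ decays only like $o(1)$, with no definite rate, the representation (\ref{ev}) by itself yields only limited regularity, and I would isolate the singular behaviour at $t=0$ by subtracting the principal part. Fixing $b>0$, write
\[
\frac{1}{\chi(iu,\xi)}=-\frac{1}{u^2+ciu+b}+r(u),\qquad r(u):=\frac{1}{\chi(iu,\xi)}+\frac{1}{u^2+ciu+b}.
\]
The quadratic $u^2+ciu+b$ has no real zeros (its real and imaginary parts $u^2+b$ and $cu$ cannot vanish together when $b>0$), it shares the two leading coefficients of $-\chi(iu,\xi)$, and a direct estimate gives $r(u)=O(u^{-4})$. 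Correspondingly $v=v_p+v_r$, where $v_p$ is the inverse Fourier transform of the rational function $-1/(u^2+ciu+b)$ --- computed by residues it is a single decaying exponential on each half-line, hence continuous on $\R$ and real-analytic on $\R\setminus\{0\}$ --- and $v_r$ is the inverse transform of $r$. Since $(iu)^k r(u)=O(u^{k-4})$ is integrable for $k\leq2$, differentiation under the integral sign is legitimate up to order two, so $v_r\in C^2(\R)$. Consequently $v$ is continuous on $\R$, of class $C^2$ on $\R\setminus\{0\}$, and any jumps of $v',v''$ at the origin come from $v_p$ alone.

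The identity $\chi(iu,\xi)\hat v(u)=1$ states precisely that, in the distributional sense, $Lv=\delta$, where $Lv:=v''-cv'-dv-\xi(K*v(\cdot-ch))$ is the operator of (\ref{leq}); justifying this pairing rigorously, with the nonlocal term carried through the Fourier transform, is the technical heart of the proof and the step I expect to be the main obstacle. Granting it, note that $\xi(K*v(\cdot-ch))$ and $dv$ are continuous (convolution of $K\in L_1(\R)$ with the bounded $v$, respectively a continuous multiple of $v$), so neither contributes a singular part at $0$. As $v$ is continuous and piecewise $C^2$, its distributional derivatives are $v'_{\mathrm{dist}}=\{v'\}$ and $v''_{\mathrm{dist}}=\{v''\}+(v'(0^+,\xi)-v'(0^-,\xi))\delta$; matching the coefficient of $\delta$ in $Lv=\delta$ forces $v'(0^+,\xi)-v'(0^-,\xi)=1$, while matching the regular part shows that the homogeneous equation (\ref{leq}) with $f\equiv0$ holds classically on $\R\setminus\{0\}$. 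Letting $t\to0^\pm$ in that homogeneous equation and subtracting --- all terms except $v''$ having equal one-sided limits, since $v,v'$ and $K*v(\cdot-ch)$ are continuous at $0$ --- yields $v''(0^+,\xi)-v''(0^-,\xi)=c\,(v'(0^+,\xi)-v'(0^-,\xi))=c$.

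Finally, I would upgrade $v\in C^2(\R\setminus\{0\})$ to $v\in C^\infty(\R\setminus\{0\})$ by bootstrapping the homogeneous equation $v''=cv'+dv+\xi(K*v(\cdot-ch))$. The one delicate point here is that $K$ is merely integrable, so no smoothness can be gained from it; instead one transfers each derivative onto the bounded, piecewise-smooth factor, using that $(K*w)'=K*w'$ whenever $w$ is Lipschitz with bounded almost-everywhere derivative (an absolute-continuity and Fubini argument), the outcome being continuous because translation is continuous in $L_1(\R)$. Since $v$ and all its one-sided derivatives are bounded (the exponential decay of $v_p$ and the $C^2$ bounds on $v_r$ give boundedness, which is preserved at each stage), induction on $n$ shows that $v^{(n+2)}=cv^{(n+1)}+dv^{(n)}+\xi(K*v^{(n)}(\cdot-ch))$ exists and is continuous off the origin, i.e. $v\in C^\infty(\R\setminus\{0\})$, completing the proof.
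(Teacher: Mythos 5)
Your treatment of everything up to and including the jump relations is correct, and it takes a genuinely different route from the paper's. The paper works directly with the oscillatory integral (\ref{ev}): real-valuedness via the cosine--sine form (\ref{V0}), differentiability for $t\neq 0$ via the Dirichlet test, the unit jump of $v'$ via the classical identity $\int_0^{+\infty}u^{-1}\sin(tu)\,du=\tfrac{\pi}{2}\,{\rm sign}\,t$, and the equation off the origin by an explicit computation. Your decomposition $1/\chi(iu,\xi)=-1/(u^2+ciu+b)+r(u)$ with $r(u)=O(u^{-4})$ is sound (the numerator of $r$ equals $b-d-\xi e^{-ichu}\hat K(iu)$, which is bounded), it concentrates all singular behaviour in the explicit piecewise-exponential $v_p$, and it delivers $v\in C^2(\R\setminus\{0\})$ and both jump formulas with less work. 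Moreover, the step you flag as the main obstacle --- that $\chi(iu,\xi)\hat v(u)=1$ yields $Lv=\delta$ --- is unproblematic: the paper proves exactly this statement in Subsection \ref{Sec22} (Lemma \ref{ds}) by pairing with a test function, Fubini, and Fourier inversion, all legitimate since $1/\chi(iu,\xi)\in L^1(\R)$; alternatively, your splitting $v=v_p+v_r$ already gives the jump of $v'$ with no distribution theory at all. (One slip of wording: you assert that $v'$ is continuous at $0$ while deducing the jump of $v''$; it is not, and your own final formula correctly uses $v'(0^+,\xi)-v'(0^-,\xi)=1$ through the term $cv'$.)

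The genuine gap is the closing bootstrap to $C^\infty(\R\setminus\{0\})$. Your transfer rule $(K*w)'=K*w'$ requires $w$ to be Lipschitz, and that is available only at the first stage, $w=v$. At the next stage $w=v'$ has precisely the jump at $0$ that the lemma establishes, so distributionally $(K*v')'=K*\{v''\}+K$, and the extra summand --- a translate of the merely integrable $K$ --- need not be continuous, nor even defined pointwise. Your induction therefore stops at $v\in C^3(\R\setminus\{0\})$, with $\{v'''\}=c\{v''\}+dv'+\xi(K*v')(\cdot-ch)$ off the origin; one further derivative acquires the term $\xi K(t-ch)$. This is not a removable technicality: for $K=\tfrac12\mathbf{1}_{[-1,1]}$, which satisfies {\rm \bf(K)}, the function $(K*v')(t)$ contains the summand $\int_{-\infty}^{t}K(s)\,ds$, piecewise linear with corners at $\pm1$, so $v^{(4)}$ fails to exist at $t=ch\pm1$, and no argument using only $K\in L^1$ can reach $C^4$ there. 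Smoothness of higher order must instead be extracted from the Fourier representation, which is what the paper does --- the substitution $u\mapsto v/t$ in (\ref{V1}) combined with the Dirichlet test, exploiting the decay of $C(u),S(u)$ and their derivatives --- and even that argument is delicate at high order: for the indicator kernel above, $P^{(k)}(u)$ contains oscillatory terms of size $u^{-4}$ which for $k\geq4$ dominate $k!\,u^{-k-1}$, so the asymptotics the paper invokes for all $k$ is exactly where bare integrability of $K$ stops sufficing. In short: your plan proves the lemma up to third-order smoothness off the origin, but the $C^\infty$ assertion cannot be obtained by your equation-based induction, and any complete treatment of it needs either the oscillatory-integral machinery or additional regularity of $K$.
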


\vspace{-2mm}

\begin{proof}
Indeed, $v$ is a real valued function because of the presentation
\begin{equation}\label{V0}
v(t,\xi) = -\frac{1}{\pi }\int_{0}^{+\infty} \frac{p(u)\cos(tu) + q(u) \sin (tu)}{p^2(u)+q^2(u)}du,\ t \in \R, 
\end{equation}
where  $p, q$ satisfying $p(u)=p(-u), q(u)=-q(-u), u \in \R$, are defined by 
$$
p(u):= u^2+d+\xi C(u), \ q(u):= cu-\xi S(u),
$$ 
where, due to the Lebesgue-Riemann lemma,  the functions
$$C(u): =\int_{\R} K(s) \cos(u(ch+s))ds, \quad S(u):= \int_{\R} K(s) \sin(u(ch+s))ds, $$
are vanishing, together with their derivatives of all orders, at $\infty$. In particular, this implies 
that 
$$
P(+\infty)=0, \ \ \mbox{where}\  \ P(u):=\frac{up(u)}{p^2(u)+q^2(u)},
$$
while derivatives of all orders $k=1,2,3,\dots$, 
$$
P^{(k)}(u)= (u^{-1})^{(k)}(1 + o(1)) = (-1)^k k!u^{-k-1} (1 + o(1)), \ u \to +\infty,  \ 
$$
are monotone at $+\infty$. Therefore, by  the Dirichlet test of the uniform convergence of improper integrals \cite[p. 421]{VAZ},  the integral
$$
\frac{1}{\pi }\int_{0}^{+\infty} \frac{up(u)\sin(tu) }{p^2(u)+q^2(u)}du
$$
converges uniformly on each compact subset of $\R\setminus\{0\}$. In consequence, (cf. \cite[p. 426]{VAZ}), 
\begin{equation}\label{V1}
v'(t,\xi) = \frac{1}{\pi }\int_{0}^{+\infty} \frac{up(u)\sin(tu) -uq(u) \cos(tu)}{p^2(u)+q^2(u)}du,\ t \not= 0, 
\end{equation}
exists for all $t\not=0$. Note that the term $uq(u)/(p^2(u)+q^2(u))$ is Lebesgue integrable on $\R_+$ so that 
 the function 
$$
I_2(t)= \int_{0}^{+\infty} \frac{uq(u) \cos(tu)}{p^2(u)+q^2(u)}du
$$
is continuous on $\R$. Hence,  in order to prove the existence of $v'(0^+,\xi), v'(0^-,\xi)$, we only need to take into account the  
integral 
$$
I_1(t):= \int_{0}^{+\infty} P_0(u)\frac{\sin(tu)}{u}du= \int_{0}^{+\infty} (1-P_1(u))\frac{\sin(tu)}{u}du= \frac{\pi\, {\rm sign}\, t}{2}- 
\int_{0}^{+\infty} P_1(u)\frac{\sin(tu)}{u}du
$$
where  $t \not= 0$, $|u^{-1}\sin (tu)| \leq |t|, \ u >0,$ and 
$$
P_0(u):= \frac{u^2p(u)}{p^2(u)+q^2(u)}, \quad
 P_1(u)= \frac{p(u)(1+\xi C(u)) + q^2(u)}{p^2(u)+q^2(u)} \in L_1(\R_+).$$
 Thus $I_1(0^+) = \pi/2, \ I_1(0^-) = -\pi/2,$ so that $v'(0^+,\xi) - v'(0^-,\xi)=1$. 
 
 Finally, in view of formulas (\ref{V0}), (\ref{V1}), a direct computation gives, for $t\not=0$, 
$$
v'(t,\xi)-cv(t,\xi) -\int_0^tv(s,\xi)ds -\xi\int_0^tdu\int_{\R}K(s)v(u-s-ch,\xi)ds= 
$$
$$
\frac{1}{\pi }\int_{0}^{+\infty} \frac{\left(p^2(u)+q^2(u)\right)\sin(tu) +\left(q(u)p(u) -p(u)q(u) \right)\cos(tu)}{u(p^2(u)+q^2(u))}du+
$$
$$
\frac{1}{\pi }\int_{0}^{+\infty} \frac{\xi p(u)S(u)+\xi q(u)C(u)+dq(u)}{u(p^2(u)+q^2(u))}du= \frac 1 2 + \frac{1}{\pi }\int_{0}^{+\infty} \frac{\xi p(u)S(u)+\xi q(u)C(u)+d q(u)}{u(p^2(u)+q^2(u))}du.
$$
Consequently, $v''(t,\xi)$ exists for $t\not=0$ and $v(t,\xi)$ satisfies equation (\ref{leq}) with $f(t)\equiv 0$ for all $t\not=0$. 
In fact,   if  $t>0$ (the case $t<0$ is similar) then 
 $$
v'(t,\xi) = \frac{1}{t\pi }\int_{0}^{+\infty} \left[P\left(\frac v t \right) \sin\, v +Q\left(\frac v t \right) \cos\,v \right]dv, \quad \mbox{where}\ Q(u):= \frac{-uq(u)}{p^2(u)+q^2(u)}.
$$
This shows that all derivatives $v^{(j)}(t,\xi), \ t >0,$ exist. 
\hfill $\square$
\end{proof}
It follows from (\ref{ev}) that $v(\pm\infty,\xi) =0$ as the Fourier transform of a function from $L_1(\R)$. In fact,  some additional work shows that 
actually $v(t)$ is exponentially decaying at $\pm \infty$:
\begin{lemma}  \label{Le2} If 
 equation (\ref{leq}) is hyperbolic then $v\in W^{1,1}(\R)$. In addition, $|v(t)|\leq Ce^{-\gamma |t|},$ $t\in \R,$ for some positive $C, \gamma$ and $v'' \in L_1(\R_\pm)$ (so that $v'(\pm\infty,\xi)=0$).
\end{lemma}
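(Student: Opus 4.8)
The plan is to exploit the Laplace-inversion formula (\ref{ev}), which, under the hyperbolicity hypothesis, represents $v$ as $v(t,\xi)=\frac{1}{2\pi i}\int_{(0)}W(z)e^{zt}\,dz$, where $W(z):=1/\chi(z,\xi)$, $(\sigma)$ denotes the line $\Re z=\sigma$, and $W$ is analytic on a vertical strip $\Pi(\lambda_l,\lambda_r)$ with $\lambda_l<0<\lambda_r$ containing the imaginary axis. First I would move the contour off the imaginary axis: for $t>0$ shift it to a line $(\alpha)$ with $\lambda_l<\alpha<0$, and for $t<0$ shift it to a line $(\beta)$ with $0<\beta<\lambda_r$. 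Writing $\widehat K(z):=\int_\R K(s)e^{-zs}\,ds$, assumption {\rm \bf(K)} makes $\widehat K$ finite and bounded on each bounded vertical strip, so that $\chi(z,\xi)=z^2-cz-d-\xi e^{-chz}\widehat K(z)$ behaves like $z^2$ as $|\Im z|\to\infty$; consequently $|W(z)|\le C/U^2$ on the horizontal segments $\{\sigma\pm iU:\ \sigma\in[\alpha,0]\}$, uniformly in $\sigma$, and their contributions vanish as $U\to\infty$. Cauchy's theorem then yields $v(t,\xi)=\frac{1}{2\pi i}\int_{(\alpha)}W(z)e^{zt}\,dz$ for $t>0$, and since $|e^{zt}|=e^{\alpha t}$ on $(\alpha)$ this already gives $|v(t,\xi)|\le Ce^{\alpha t}$, i.e. exponential decay at $+\infty$; the symmetric argument on $(\beta)$ gives decay at $-\infty$, hence $|v(t)|\le Ce^{-\gamma|t|}$ on $\R$.

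To reach $W^{1,1}(\R)$ and the second-order claim I would refine this by subtracting the principal part of $W$ at infinity. A geometric-series expansion of $1/\chi(z,\xi)$, based on $|cz^{-1}+dz^{-2}+z^{-2}\xi e^{-chz}\widehat K(z)|=O(|z|^{-1})$ along $(\alpha)$, gives $W(z)=z^{-2}+cz^{-3}+R_2(z)$ with $R_2(z)=O(|z|^{-4})$; the uniformity of this remainder along the line again rests on the boundedness of $e^{-chz}\widehat K(z)$ granted by {\rm \bf(K)}. For $t>0$ the two rational heads integrate to zero over $(\alpha)$: closing the contour to the left (where the line $(\alpha)$, $\alpha<0$, leaves the only pole $z=0$ of $z^{-n}$ to its right) and using that $e^{zt}$ decays on the left semicircle, one gets $\frac{1}{2\pi i}\int_{(\alpha)}z^{-n}e^{zt}\,dz=0$ for $n=2,3$. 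Hence $v(t,\xi)=\frac{1}{2\pi i}\int_{(\alpha)}R_2(z)e^{zt}\,dz$ on $\R_+$, and because $z^{2}R_2(z)=O(|z|^{-2})$ is absolutely integrable along $(\alpha)$ I may differentiate twice under the integral sign, obtaining $v'(t,\xi)=\frac{1}{2\pi i}\int_{(\alpha)}zR_2e^{zt}\,dz$ and $v''(t,\xi)=\frac{1}{2\pi i}\int_{(\alpha)}z^{2}R_2e^{zt}\,dz$, both bounded by $Ce^{\alpha t}$. Thus $v,v',v''$ all decay exponentially on $\R_+$, and the analogous computation on $(\beta)$ does the same on $\R_-$.

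It then remains to assemble the conclusions. Exponential decay gives $v\in L^1(\R)$; $v'$ is exponentially small at $\pm\infty$ and, by Lemma \ref{Le1}, has finite one-sided limits at $0$, hence is bounded near the origin, so $v'\in L^1(\R)$. Since $v$ is continuous on $\R$ and smooth on $\R\setminus\{0\}$ with integrable classical derivative, it is absolutely continuous and its weak derivative coincides with $v'$; therefore $v\in W^{1,1}(\R)$. The exponential bounds for $v''$ on each half-line give $v''\in L^1(\R_\pm)$, and since $v'$ itself decays exponentially we conclude $v'(\pm\infty,\xi)=0$. The genuinely delicate points of this program are the uniform control of $\widehat K(z)$ and of the remainder $R_2$ on the shifted lines—precisely where {\rm \bf(K)} enters—and the differentiation under the integral sign, which only becomes legitimate after subtracting the head $z^{-2}+cz^{-3}$: this upgrades the integrand from the non-integrable growth $z^2W(z)=O(1)$ to the integrable decay $z^2R_2(z)=O(|z|^{-2})$. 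I expect this subtraction step, together with the verification that the rational heads vanish for the correct sign of $t$, to be the main obstacle to carry out cleanly.
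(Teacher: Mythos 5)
Your proof is correct, and after its first step it genuinely diverges from the paper's. The exponential bound on $v$ itself is obtained exactly as in the paper: hyperbolicity yields a zero-free vertical strip around the imaginary axis, the contour in (\ref{ev}) is shifted to $\Re z=\alpha<0$ (resp.\ $\Re z=\beta>0$), and the horizontal segments vanish because $\chi(z,\xi)=z^2(1+o(1))$ uniformly on bounded vertical strips, which is what \textbf{(K)} supplies through the boundedness of $e^{-chz}\widehat K(z)$ there. For $v'$ and $v''$ the two arguments part ways. The paper subtracts nothing: for $v'$ it notes that the shifted integrand $z e^{zt}/\chi(z,\xi)$, though only $O(|z|^{-1})$ and hence not absolutely integrable, still defines via the $L^2$-theory of the Fourier transform a function $\rho_\pm\in L_2(\R)$ with $v'(t,\xi)=e^{\pm\gamma t}\rho_\pm(t)$, and then Cauchy--Schwarz (the ``H\"older inequality'' in the text) gives $v'\in L_1(\R_\pm)$; for $v''$ it simply reads integrability off equation (\ref{leq}) itself, $v''=cv'+dv+\xi K*v(\cdot-ch)\in L_1(\R_\pm)$, every term on the right being already known to lie in $L_1$. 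You instead expand $1/\chi(z,\xi)=z^{-2}+cz^{-3}+R_2(z)$ with $R_2=O(|z|^{-4})$ along the shifted lines, check that the rational heads have vanishing inverse transform on the relevant half-line (the pole $z=0$ sitting on the other side of the contour --- your closing-the-contour verification is right), and differentiate twice under the now absolutely convergent integral. This buys pointwise exponential bounds $|v'(t)|,|v''(t)|\le Ce^{-\gamma|t|}$ on each half-line, strictly stronger than what the lemma asserts and than what the paper proves at this point (comparable refined asymptotics appear in the paper only later, in Claim I of the proof of Lemma \ref{Le6}, via residues); the price is exactly the bookkeeping you flag --- uniform control of $\widehat K$ and $R_2$ on the shifted lines, which \textbf{(K)} does provide since $\widehat K$ is entire and bounded on bounded vertical strips. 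If you want to streamline, note that once $v,v'\in L_1$ are in hand, the paper's one-line derivation of $v''\in L_1(\R_\pm)$ from the equation makes your second differentiation under the integral unnecessary, and that $v'(\pm\infty,\xi)=0$ then follows from $v'\in L_1(\R)$ together with $v''\in L_1(\R_\pm)$ without any pointwise bound on $v'$.
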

\begin{proof}  A simple inspection of the characteristic equation 
 \begin{equation}\label{eK}
z^2-cz-d=\xi e^{-chz}\int_\R e^{-zs}K(s)ds,
\end{equation}
shows that, in view of the hyperbolicity of  (\ref{leq}),  there exists $\gamma >0$ such that the vertical strip $\{|\Re z| < 2\gamma\}$ does not contain roots of (\ref{eK}). But then we can shift the path of integration in the inversion formula for the Laplace transform (e.g. see \cite[p. 88]{AGT}) to obtain
$$ 
v(t,\xi) = \frac{e^{\pm\gamma t}}{2\pi }\int_{-\infty}^{+\infty} \frac{e^{iut}du}{(\pm \gamma+iu)^2-c(\pm \gamma +i u)-d -\xi \int_\R K(s)e^{-(\pm\gamma+iu)s}ds} = {e^{\pm\gamma t}}\sigma_\pm(t), 
$$
where $\sigma_\pm(\infty)=0$. Next, by (\ref{V1}), 
$$
v'(t,\xi) = -\frac{1}{2\pi }\lim_{T\to +\infty}\int_{-T}^{T} \frac{iue^{iut}du}{u^2+ci u+d +\xi e^{-iu ch}\int_\R K(s)e^{-ius}ds}, \ \  t\not=0.
$$
Therefore similarly, for $t\not=0$, 
$$
v'(t,\xi) = \frac{e^{\pm\gamma t}}{2\pi }\lim_{T\to +\infty}\int_{-T}^{T} \frac{(\pm\gamma + iu)e^{iut}du}{(\pm\gamma + iu)^2-c(\pm\gamma + iu)+d - \xi \int_\R K(s)e^{-(\pm\gamma + iu)s}ds},
$$ 
where the latter limit also exists in $L^2(\R)$ and represent the Fourier transform  of an element of $L_2(\R)$. 
Thus $v'(t,\xi) = e^{\pm\gamma t}\rho_{\pm}(t), \ t \not =0,$ where  $\rho_\pm \in L_2(\R)$. By the H\"older inequality, $v' \in L_1(\R_\pm)$ so that $v' \in L_1(\R)$. Finally, since $v$ satisfies the equation (\ref{leq}) on $\R_\pm$, we conclude that $v''(t)=cv'(t) +v(t) + \xi K*v(t-ch)$ also belongs to  $L_1(\R_\pm)$.
\hfill $\square$
\end{proof}
\begin{lemma}  \label{Le3} If $v\in W^{1,1}(\R)$ and function $f$ is continuous and bounded then 
$v*f\in C_b^1(\R)$  and $(v*f)' = v'*f$.  \end{lemma}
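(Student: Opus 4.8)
The plan is to establish three facts in turn: the boundedness of both $v*f$ and $v'*f$, their continuity, and finally the identity $(v*f)'=v'*f$ together with the asserted $C^1$ regularity. First I would record the elementary bounds. Since $v,v'\in L_1(\R)$ and $f$ is bounded, for every $t$ one has $|(v*f)(t)|\le \|f\|_\infty\|v\|_{L_1}$ and $|(v'*f)(t)|\le \|f\|_\infty\|v'\|_{L_1}$, so both convolutions are well defined and bounded on $\R$. Next, both are continuous: writing $(v*f)(t)-(v*f)(t_0)=\int_\R[v(t-s)-v(t_0-s)]f(s)\,ds$ and changing variables, the modulus is at most $\|f\|_\infty\|v(\cdot+(t-t_0))-v\|_{L_1}$, which tends to $0$ as $t\to t_0$ by the continuity of translation in $L_1(\R)$; the same argument applied to $v'$ shows that $v'*f$ is continuous as well.

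The heart of the matter is the differentiation formula. Here I would use that a $W^{1,1}(\R)$ function admits an absolutely continuous representative, so that $v(b)-v(a)=\int_a^b v'(y)\,dy$ for all $a<b$. For $h\neq 0$ this gives the pointwise identity $\frac{v(t+h-s)-v(t-s)}{h}=\frac1h\int_0^h v'(t-s+\tau)\,d\tau$. Substituting this into the difference quotient of $v*f$ and applying Fubini's theorem — legitimate because $\int_0^h\!\int_\R |f(s)|\,|v'(t-s+\tau)|\,ds\,d\tau\le |h|\,\|f\|_\infty\|v'\|_{L_1}<\infty$ — yields
\[
\frac{(v*f)(t+h)-(v*f)(t)}{h}=\frac1h\int_0^h (v'*f)(t+\tau)\,d\tau .
\]
Since $v'*f$ is continuous, the right-hand side converges to $(v'*f)(t)$ as $h\to 0$ (the average of a continuous function over a shrinking interval tends to its value at the endpoint, and this works for both signs of $h$). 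Hence $(v*f)'(t)$ exists for every $t$ and equals $(v'*f)(t)$; being a continuous and bounded function, this derivative shows that $v*f\in C_b^1(\R)$.

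The only delicate point is the passage displayed above: one must justify the exchange of the order of integration and then recognise the resulting expression as a Cauchy mean of the continuous function $v'*f$. Everything else is routine estimation. An equivalent route, which I would keep in reserve, is to approximate $v$ in the $W^{1,1}(\R)$ norm by smooth compactly supported functions $v_n$; then $(v_n*f)'=v_n'*f$ holds classically, and the uniform estimates $\|v_n*f-v*f\|_\infty\le\|f\|_\infty\|v_n-v\|_{L_1}$ together with $\|v_n'*f-v'*f\|_\infty\le\|f\|_\infty\|v_n'-v'\|_{L_1}$ permit passage to the limit under the derivative, giving the same conclusion.
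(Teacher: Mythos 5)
Your proof is correct and follows essentially the same route as the paper's: both exploit the absolutely continuous representative of $v\in W^{1,1}(\R)$ to write the difference quotient of $v*f$ via $\int v'(\cdot)$, then exchange the order of integration by Fubini (with the same justification $\le |h|\,\|f\|_\infty\|v'\|_{L_1}$). The only cosmetic difference is the final limit passage: the paper keeps the expression as $\int_\R v'(u)\bigl(\tfrac1\delta\int_{u-\delta}^{u}f(t-s)\,ds\bigr)du$ and applies dominated convergence using the continuity of $f$, whereas you recognise the quotient as the Cauchy mean $\tfrac1h\int_0^h (v'*f)(t+\tau)\,d\tau$ and invoke the continuity of $v'*f$, which you (unlike the paper) establish explicitly via continuity of translation in $L_1(\R)$.
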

\begin{proof} Clearly, $|v*f(t)|\leq \sup_{t\in \R}|f(t)||v|_1, \ t \in \R,$ and 
$$
v*f(t+\delta)- v*f(t)= \int_\R f(t-s)(v(s+\delta)- v(s))ds = \int_\R f(t-s)\int_s^{s+\delta}v'(u)duds=  
$$
$$
\delta \int_\R v'(u) du \frac 1 \delta  \int_{u-\delta}^{u}f(t-s)ds \ \  \mbox{because of }\ 
\int_\R \int_s^{s+\delta}|v'(u)|duds =\delta|v|_1 <\infty.$$
 $$
\hspace{-17mm} \mbox{Since \ }\ \left| \frac 1 \delta  \int_{u-\delta}^{u}f(t-s)ds\right|\leq  \sup_{t\in \R}|f(t)|, \quad \lim_{\delta \to 0} \frac 1 \delta  \int_{u-\delta}^{u}f(t-s)ds= f(t-u), 
 $$
we conclude that $v*f$ is differentiable on $\R$ and $(v*f)' = v'*f$.  Note also that $|v'*f(t)|\leq \sup_{t\in \R}|f(t)||v'|_1, \ t \in \R.$
\hfill $\square$
\end{proof}
Our next goal is, given a bounded continuous function $f: \R \to \R$ (i.e. $f\in C_b(\R)$),   to prove 
the uniqueness of  bounded solution $y(t)$ for the hyperbolic equation (\ref{leq}) and 
to justify the representation $y(t) = -v*f(t)$:
\begin{corollary}\label{com}  Suppose that $f\in C_b(\R)$.  If 
 equation (\ref{leq}) is hyperbolic  and $v$ is the associated fundamental solution then the formula  $u=-v*f$ gives the unique  $C^2$-smooth bounded solution of  (\ref{leq}). 
\end{corollary}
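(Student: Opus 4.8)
The plan is to verify the two assertions separately: that $u=-v*f$ is a bounded $C^2$ solution, and that no other bounded $C^2$ solution exists. Throughout I write $\mathcal{L}y(t):= y''(t) -cy'(t)-dy(t) -\xi \int_\R K(t-s) y(s-ch)\,ds$, so that (\ref{leq}) reads $\mathcal{L}y + f =0$.

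For existence, boundedness and $C^1$-smoothness of $u=-v*f$, together with $u'=-v'*f$, are immediate from Lemmas \ref{Le2} and \ref{Le3}. To obtain $u''$ I would not differentiate $v'*f$ directly, since the weak derivative of $v'$ carries a Dirac mass because of the jump $v'(0^+,\xi)-v'(0^-,\xi)=1$; instead I split the convolution at the singularity,
$$ u'(t) = -\int_{-\infty}^{t} v'(t-s)f(s)\,ds -\int_{t}^{+\infty} v'(t-s)f(s)\,ds, $$
and differentiate each piece by the Leibniz rule. The boundary terms at $s=t$ produce $(v'(0^+,\xi)-v'(0^-,\xi))f(t)=f(t)$ and the interior terms produce $(v''*f)(t)$, where $v''$ is the pointwise (a.e.) second derivative, which lies in $L_1(\R)$ by Lemma \ref{Le2}; hence $u''=-f-v''*f$ is continuous and $u\in C^2(\R)$. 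Substituting $u,u',u''$ into $\mathcal{L}u+f$ and using commutativity of convolution with the shift by $ch$ and with $K$, every term factors through $f$ as $[-v''+cv'+dv+\xi K*v(\cdot-ch)]*f$ plus the cancelling pair $-f+f$. The bracket vanishes a.e. because, by Lemma \ref{Le1}, $v$ solves the homogeneous equation on $\R\setminus\{0\}$, so $\mathcal{L}u+f\equiv 0$.

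For uniqueness, the key is the reproducing identity $v*\mathcal{L}w=w$, valid for every $w\in C^2(\R)$ with bounded $w,w',w''$. I would prove it by inserting $\mathcal{L}w$ into $v*(\cdot)$ and integrating by parts twice in $v*w''$, again splitting the integral at $s=t$ as above: the exponential decay of $v$ and $v'$ (Lemma \ref{Le2}) annihilates all boundary terms at $\pm\infty$, the jump $v'(0^+,\xi)-v'(0^-,\xi)=1$ produces precisely the term $w$, and the remaining terms combine into $[v''-cv'-dv-\xi K*v(\cdot-ch)]*w=0$ by the homogeneous equation for $v$. Since $\mathcal{L}w=0$ the identity then forces $w=v*0=0$, so the bounded $C^2$ solution is unique. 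It remains to check that any bounded $C^2$ solution automatically has bounded derivatives, so that the identity applies to the difference of two candidate solutions: rewriting the equation as $w''-cw'=dw+\xi K*w(\cdot-ch)=:\Phi\in C_b(\R)$, a first-order linear ODE for $w'$, and solving by variation of constants, boundedness of $w$ forces the coefficient of the growing exponential to vanish, which yields $\|w'\|_\infty\le \|\Phi\|_\infty/|c|$ (and the Landau–Kolmogorov inequality settles the case $c=0$); then $w''=cw'+\Phi$ is bounded as well.

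I expect the double integration by parts in the reproducing identity to be the main obstacle, since it is where the non-smoothness of $v$ at the origin must be tracked exactly: the one-sided limits $v'(0^\pm,\xi)$ at the splitting point must be kept in the boundary terms, and the vanishing of the boundary terms at infinity must be justified from the estimates of Lemma \ref{Le2} together with the separately established boundedness of $w$ and $w'$. As an alternative that bypasses the derivative bounds entirely, uniqueness can also be obtained by the Fourier transform: a bounded $w$ with $\mathcal{L}w=0$ is a tempered distribution whose transform satisfies $\chi(iu,\xi)\widehat{w}=0$, and since hyperbolicity gives $\chi(iu,\xi)\neq 0$ with $1/\chi(iu,\xi)$ a smooth, slowly increasing multiplier, $\widehat{w}=0$ and hence $w\equiv 0$.
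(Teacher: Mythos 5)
Your proposal is correct and takes essentially the same route as the paper's own proof: existence via splitting the convolution at $s=t$ so that the jump $v'(0^+,\xi)-v'(0^-,\xi)=1$ produces the inhomogeneity, and uniqueness via convolving the equation with $v$ and integrating by parts twice (your reproducing identity $v*\mathcal{L}w=w$ is precisely the paper's computation of $u''*v$ and $u'*v$, applied to the difference of two solutions). Your variation-of-constants bound for $w'$ and $w''$ usefully fills in a step the paper dismisses with ``it is easy to see,'' and your Fourier-multiplier alternative for uniqueness is sound and parallels the distributional viewpoint of Subsection \ref{Sec22}.
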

\begin{proof} Invoking Lemmas \ref{Le1}, \ref{Le2}, \ref{Le3} and using the formula
$$
u(t) = -\int_{-\infty}^tv(t-s,\xi)f(s)ds -\int^{+\infty}_tv(t-s,\xi)f(s)ds, 
$$
we find easily that $$u'(t) = -\int_{\R}v'(t-s,\xi)f(s)ds, \ u''(t) = -\int_{\R}v''(t-s,\xi)f(s)ds -f(t)(v'(0^+,\xi)-v'(0^-,\xi))$$
are continuous, bounded and satisfy equation (\ref{leq}). 

On the other hand, assume that $u(t)$ is some classical bounded solution of  equation (\ref{leq}). Then it is 
easy to see from (\ref{leq})  that $u'(t), u''(t)$ are also bounded on $\R$ and 
$$
u''* v(t) = \int_\R u''(s)v(t-s)ds = \int_\R v'(t-s)u'(s)ds = 
u(t) + \int_\R v''(t-s)u(s)ds, $$ $$u'* v(t) = \int_\R u'(s)v(t-s)ds =  \int_\R u(s)v'(t-s)ds. 
$$
In consequence, considering the convolution of equation  (\ref{leq}) with the fundamental solution, 
we find that $u(t) + v*f(t)=0$. 
\hfill $\square$
\end{proof}
\subsection{Continuous function $v(t,\xi)$  as a distributional solution of a non-local  equation}\label{Sec22}\noindent It is worthwhile to analyse the fundamental solution $v(t,\xi)$ and some of its properties from the point of view of the theory of distributions. 
The distributions will be regarded in the standard way, as elements  of the dual space $\mathcal{D}'(\R)$ \cite{Str} (recall that  the space $\mathcal{D}(\R)$ of test functions consists of compactly supported smooth  functions). 
 This perspective is quite useful  since it helps to cope with more general non-local delayed differential  operators
$$
\mathcal{L} \phi(t)=\phi^{(n)}(t) +a_{n-1}\phi^{(n-1)}(t)+ \dots + a_1\phi'(t)+a_0\phi(t) +b_0\int_\R K(t-s) \phi(s)ds+ \sum_{j=1}^mb_j\phi(t-h_j). 
$$
We assume here that $\phi \in \mathcal{D}(\R)$, $n \geq 2$, $a_i,b_j, h_j \in \R$ and that the operator $\mathcal{L} $ is hyperbolic in the sense that
the characteristic function $\chi(z,\mathcal{L})$ determined from $(\mathcal{L})(e^{zt}) = \chi(z,\mathcal{L})e^{zt}$ does not have zeros on the imaginary axis. Obviously,  this form of $\mathcal{L}$ includes   the particular 
case of the operator defined by  the left-hand side of equation (\ref{leq}).

Consider the  formally adjoint operator $\mathcal{L}^* $ defined by 
$$
\mathcal{L}^* \psi(t)=(-1)^n\psi^{(n)}(t) +(-1)^{n-1}a_{n-1}\psi^{(n-1)}(t)+ \dots - a_1\psi'(t)+a_0\psi(t) +
$$
$$b_0\int_\R K(s-t) \psi(s)ds+ \sum_{j=1}^mb_j\psi(t+h_j), \quad \psi \in \mathcal{D}(\R). $$
Clearly, for all $\phi,\psi \in \mathcal{D}(\R)$, it holds that $ \mathcal{L} \phi,  \mathcal{L}^* \psi \in L^1(\R)$
and 
$$
\int_\R \psi (t) \mathcal{L} \phi(t)dt = \int_\R \phi (t) \mathcal{L}^* \psi(t)dt. 
$$
We have the following
\begin{lemma}  \label{ds} Suppose that 
$\mathcal{L}$  is hyperbolic. Then  function 
\begin{equation}\label{evG}
v(t) = \frac{1}{2\pi }\int_{-\infty}^{+\infty} \frac{e^{iut}du}{\chi(iu,\mathcal{L})}, \quad  t \in \R, 
\end{equation}
is continuous, bounded and Lebesgue integrable on $\R$. Moreover, it is  a distributional solution of the equation $\mathcal{L} v(t) = \delta(t)$, 
where  $\delta(t)$  is the Dirac delta function:  
$$
\int_\R v(t) \mathcal{L}^* \phi(t)dt = \phi(0) \quad \mbox{ for all}\  \phi  \in \mathcal{D}(\R).
$$
In consequence, $v(t)$ is $C^{n}$-smooth on $\R\setminus\{0\}$ and $\mathcal{L} v(t) = 0$ for all 
$t\not=0$.
\end{lemma}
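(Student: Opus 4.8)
The plan is to read all three assertions off the symbol $1/\chi(iu,\mathcal{L})$ and then transfer the information to $v$. First I would control $\chi(iu,\mathcal{L})$ at infinity: since $n\geq 2$, the leading term $(iu)^n$ dominates, the lower order polynomial terms are $O(|u|^{n-1})$, the delay terms $\sum_j b_j e^{-ih_j u}$ are bounded, and $b_0\int_\R K(s)e^{-ius}ds\to 0$ by the Riemann--Lebesgue lemma, so $|\chi(iu,\mathcal{L})|\sim |u|^{n}$ as $|u|\to\infty$. Together with hyperbolicity ($\chi(iu,\mathcal{L})\neq 0$ on the real axis) and continuity this gives $\inf_u|\chi(iu,\mathcal{L})|>0$ and $1/\chi(iu,\mathcal{L})\in L_1(\R)$, whence $v$ is continuous, bounded, and vanishes at $\pm\infty$. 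To upgrade boundedness to $v\in L_1(\R)$ I would extract two extra orders of decay: condition {\rm \bf(K)} forces $\int_\R K(s)|s|^k ds<\infty$ for every $k$, so $u\mapsto 1/\chi(iu,\mathcal{L})$ is smooth and a short computation gives $\partial_u^2(1/\chi(iu,\mathcal{L}))=O(|u|^{-n-2})\in L_1(\R)$. Integrating by parts twice in (\ref{evG}) then yields $t^2 v(t)=-\frac{1}{2\pi}\int_\R \partial_u^2(1/\chi(iu,\mathcal{L}))e^{iut}du$, which is bounded; combined with boundedness of $v$ this gives $|v(t)|\leq C/(1+t^2)$ and hence $v\in L_1(\R)$ (equivalently, one could shift the contour as in the proof of Lemma \ref{Le2} to get exponential decay).

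For the distributional identity I would pass to the Fourier side with the convention $\hat f(u)=\int_\R f(t)e^{-iut}dt$, so that by construction $\hat v(u)=1/\chi(iu,\mathcal{L})$. A direct evaluation of $\mathcal{L}^*$ on exponentials gives $\mathcal{L}^*(e^{wt})=\chi(-w,\mathcal{L})e^{wt}$; since $\mathcal{L}^*$ is translation invariant this reads, after Fourier transform, $\widehat{\mathcal{L}^*\phi}(u)=\chi(-iu,\mathcal{L})\hat\phi(u)$, which is integrable because $\hat\phi$ is Schwartz while $\chi(-iu,\mathcal{L})$ grows only polynomially. Both $v$ and $\mathcal{L}^*\phi$ lie in $L_1(\R)$ (the latter is a finite sum of derivatives of $\phi$, a convolution $b_0\,K(-\cdot)*\phi$, and shifts of $\phi$), so Fubini justifies the multiplication formula $\int_\R v\,\mathcal{L}^*\phi\,dt=\frac{1}{2\pi}\int_\R \hat v(-u)\,\widehat{\mathcal{L}^*\phi}(u)\,du$. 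Substituting $\hat v(-u)=1/\chi(-iu,\mathcal{L})$ and the formula for $\widehat{\mathcal{L}^*\phi}$, the factors $\chi(-iu,\mathcal{L})$ cancel and I am left with $\frac{1}{2\pi}\int_\R\hat\phi(u)\,du=\phi(0)$ by Fourier inversion. This is precisely $\mathcal{L}v=\delta$ in $\mathcal{D}'(\R)$.

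For the regularity statement I would use that $\operatorname{supp}\delta=\{0\}$, so the identity restricts to $\mathcal{L}v=0$ on any open interval $I\subset\R\setminus\{0\}$. Separating the local from the nonlocal part and writing $P(D)=D^n+a_{n-1}D^{n-1}+\dots+a_0$ and $F:=-b_0(K*v)-\sum_j b_j v(\cdot-h_j)$, the equation becomes $P(D)v=F$ in $\mathcal{D}'(I)$. The point is that $F$ is a continuous function on all of $\R$, because $v$ is continuous and $K\in L_1(\R)$ make $K*v$ and the shifts continuous; thus the nonlocal terms contribute only a continuous forcing and regularity is governed by the local operator $P(D)$. Factoring $P(D)=\prod_{k=1}^n(D-\lambda_k)$ over $\C$ and peeling the factors one at a time — using that any distributional solution of $(D-\lambda)w=f$ with $f\in C^k$ differs from a classical particular solution only by a term $Ce^{\lambda t}$, and hence lies in $C^{k+1}$ — raises the regularity by one at each of the $n$ steps and yields $v\in C^n(I)$. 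As $I$ was arbitrary, $v\in C^n(\R\setminus\{0\})$ and $\mathcal{L}v=0$ there classically.

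I expect the genuine obstacle to be this last step rather than the (essentially standard) function-theoretic and Fourier computations. The nonlocal terms $K*v$ and $v(\cdot-h_j)$ cannot be differentiated, since $K$ is merely integrable, so one must resist treating them as part of a differential operator and instead absorb them into the continuous right-hand side $F$; only after this reduction does the nonlocal equation behave, for interior regularity purposes, like the constant-coefficient ordinary operator $P(D)$. A secondary subtle point is the integrability $v\in L_1(\R)$, which does not follow from boundedness and requires the two extra orders of decay produced by the integration-by-parts argument above.
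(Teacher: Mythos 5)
Your proposal is correct and follows the same three-step skeleton as the paper's proof: continuity, boundedness and decay of $v$ from $1/\chi(iu,\mathcal{L})\in L_1(\R)$; the identity $\int_\R v\,\mathcal{L}^*\phi\,dt=\phi(0)$ by an absolutely convergent Fubini interchange in which the factor $\chi(iu,\mathcal{L})$ produced by moving the operator onto the exponential cancels against $1/\chi(iu,\mathcal{L})$, followed by Fourier inversion; and interior regularity by rewriting the equation on any interval avoiding the origin as $P(D)v=F$ with the nonlocal terms absorbed into the continuous forcing $F$. The only departures are local substitutions: you obtain $v\in L_1(\R)$ via two integrations by parts (polynomial decay $|v(t)|\leq C/(1+t^2)$, legitimate since {\rm \bf(K)} yields all moments of $K$), whereas the paper repeats the contour-shift argument of Lemma \ref{Le2} to get exponential decay, and you prove the distributional-to-classical step by peeling first-order factors of $P(D)$, where the paper simply cites Shilov's theorem.
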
 
\begin{proof} From $1/\chi \in L^1(\R)$ we infer that $v \in C(\R)$,  $v(\pm\infty) =0$. In fact, repeating the argument given in the first lines of the proof of Lemma \ref{Le2}, we obtain that  $|v(t)|\leq Ce^{-\gamma |t|},$ $t\in \R,$ for some positive $C, \gamma$.  
Now, using the Fubini theorem and integrating by parts, we find that 
$$
\int_\R v(t) \mathcal{L}^* \phi(t)dt = \frac{1}{2\pi }\int_\R \mathcal{L}^* \phi(t)dt \int_{\R} \frac{e^{iut}du}{\chi(iu,\mathcal{L})}=
\frac{1}{2\pi }\int_{\R} \frac{du}{\chi(iu,\mathcal{L})}\int_\R e^{iut} \mathcal{L}^* \phi(t)dt =
$$
$$
\frac{1}{2\pi }\int_{\R} \frac{du}{\chi(iu,\mathcal{L})}\int_\R\phi(t) \mathcal{L} e^{iut} dt = \frac{1}{2\pi }\int_{\R} \frac{du}{\chi(iu,\mathcal{L})}\int_\R\phi(t)\chi(iu,\mathcal{L})e^{iut} dt =$$
$$
\frac{1}{2\pi }\int_{\R} du\int_\R\phi(t) e^{iut} dt = \phi(0). 
$$
In the last line, we are using the inversion formula for the Fourier transform \cite{VAZ}. 

Finally,  we observe that, on each interval $(\alpha, \beta)$ disjoint with 
$\{0\}$, continuous $v(t)$ is a distributional solution of the following inhomogeneous linear ordinary equation with constant coefficients and continuous right-hand side 
$F(t)$: 
$$
v^{(n)}(t) +a_{n-1}v^{(n-1)}(t)+ \dots + a_1v'(t)+a_0v(t) = F(t), \quad t \in (\alpha, \beta), $$
$$F(t):= - b_0\int_\R K(t-s) v(s)ds-  \sum_{j=1}^mb_jv(t-h_j).
$$
It is well known \cite{Shilov} that, in such a case, $v(t)$ is also a classical solution on $(\alpha, \beta)$ of the latter equation. 
\hfill $\square$
\end{proof}
Now, since for $f \in C_b(\R)$ and continuous $v \in L^1(\R)$
$$
-\int_\R v*f (t) \mathcal{L}^* \phi(t)dt = \int_\R f(s) ds \int_\R v(t-s)\mathcal{L}^* \phi(t+s)dt= \int_\R f (s)\phi(s)ds,
$$
we obtain the following
\begin{corollary} For each fixed $f \in C_b(\R)$, the function  $u=-v*f$ is a distributional solution of the equation 
$\mathcal{L} u(t)+f(t)=0$. In consequence  \cite{Shilov}, $u(t)$ is also a bounded $C^{n}$-smooth classical solution on $\R$ of this equation. 
\end{corollary}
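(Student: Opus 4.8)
The plan is to read the corollary off the fundamental identity of Lemma~\ref{ds}, namely that $v$ solves $\mathcal{L} v = \delta$ in $\mathcal{D}'(\R)$, together with the translation invariance of $\mathcal{L}$ and $\mathcal{L}^*$. Conceptually, since $\mathcal{L}$ is a convolution-type (translation-invariant) operator one expects $\mathcal{L}(v*f) = (\mathcal{L} v)*f = \delta*f = f$, so that $u=-v*f$ ought to satisfy $\mathcal{L} u = -f$, i.e. $\mathcal{L} u + f = 0$. The two tasks are then: make this rigorous at the distributional level for a merely bounded continuous $f$, and upgrade the distributional solution to a classical one via \cite{Shilov}.

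First I would fix a test function $\phi \in \mathcal{D}(\R)$ and compute $\langle \mathcal{L} u, \phi\rangle = \langle u, \mathcal{L}^*\phi\rangle = -\int_\R (v*f)(t)\,\mathcal{L}^*\phi(t)\,dt$. Writing $(v*f)(t) = \int_\R v(t-s)f(s)\,ds$ and interchanging the order of integration, I would simplify the inner integral: after the substitution $\tau = t-s$ and using the translation invariance $[\mathcal{L}^*\phi](\tau + s) = \mathcal{L}^*[\phi(\cdot+s)](\tau)$, it becomes $\int_\R v(\tau)\,\mathcal{L}^*\psi_s(\tau)\,d\tau$ with $\psi_s := \phi(\cdot+s) \in \mathcal{D}(\R)$. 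Lemma~\ref{ds} then identifies this inner integral with $\psi_s(0) = \phi(s)$. Hence $\langle \mathcal{L} u, \phi\rangle = -\int_\R f(s)\phi(s)\,ds = \langle -f, \phi\rangle$ for every $\phi$, which is exactly $\mathcal{L} u + f = 0$ in $\mathcal{D}'(\R)$. Note that Lemma~\ref{ds} is invoked only against the genuine test function $\psi_s$, so no issue arises from $\mathcal{L}^*\phi$ itself failing to be compactly supported.

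The one step demanding care is the interchange of integrations, which is licensed by Fubini provided the double integrand is absolutely integrable; for this I need $\mathcal{L}^*\phi \in L^1(\R)$, so that against $v \in L^1(\R)$ (Lemma~\ref{ds}, in fact exponentially decaying by Lemma~\ref{Le2}) and $f \in L^\infty$ everything converges. This is not automatic: the local differential part and the discrete shifts $\phi(t+h_j)$ keep $\mathcal{L}^*\phi$ compactly supported, but the non-local term $\int_\R K(s-t)\phi(s)\,ds$ is a reflected convolution of $K \in L^1$ with the compactly supported bounded $\phi$, hence globally $L^1$ but no longer compactly supported. I would record this integrability as the single genuine technical point. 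The boundedness of $u$ is immediate from Young's inequality, $\|u\|_\infty = \|v*f\|_\infty \le \|v\|_{L^1}\|f\|_\infty$.

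Finally, for the classical regularity I would rewrite the distributional equation as a constant-coefficient ordinary differential equation $u^{(n)} + a_{n-1}u^{(n-1)} + \dots + a_0 u = G$, moving all non-local and delayed contributions into $G(t) := -f(t) - b_0\int_\R K(t-s)u(s)\,ds - \sum_{j=1}^m b_j u(t-h_j)$. Since $u$ is already bounded and continuous, $K \in L^1$, and the shifts of $u$ are continuous, the function $G$ is continuous and bounded on $\R$. A distributional solution of a constant-coefficient ODE with continuous right-hand side is a genuine $C^{n}$ solution by the classical regularity result of \cite{Shilov}, which yields the bounded $C^{n}$-smooth classical solution and closes the argument. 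Thus the main obstacle is structural rather than computational: verifying $\mathcal{L}^*\phi \in L^1$ so Fubini applies, and reducing the non-local equation to a local ODE with continuous data so that \cite{Shilov} is applicable.
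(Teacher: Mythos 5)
Your proposal is correct and follows essentially the same route as the paper: the paper's one-line proof is exactly your computation (Fubini, justified by $v\in L^1(\R)$ and $\mathcal{L}^*\phi\in L^1(\R)$ as recorded just before Lemma~\ref{ds}, then a translation and Lemma~\ref{ds} applied to the shifted test function $\phi(\cdot+s)$), and the classical $C^n$-regularity is obtained, as you do, by absorbing the non-local and delayed terms into a continuous right-hand side of a constant-coefficient ODE and citing \cite{Shilov}. Incidentally, your signs are the consistent ones: the paper's displayed identity carries a harmless sign slip, since $-\int_\R (v*f)(t)\,\mathcal{L}^*\phi(t)\,dt=-\int_\R f(s)\phi(s)\,ds$, which is precisely what yields $\mathcal{L}u+f=0$.
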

\subsection{A criterion of negativity of the fundamental solution $v(t,\xi)$}\noindent In this subsection, assuming the hyperbolicity of equation (\ref{leq}), we establish a criterion of negativity of  its fundamental solution $v(t,\xi)$.  It is well known (and it is straightforward to check) that $v(t,\xi)<0, \ t \in \R,$ in the local and non-delayed case when $\xi=0, d >0$  and 
\begin{equation}\label{wn}
v(s,0)=\min\{e^{\lambda_1(0) s}, e^{\lambda_0(0) s} \}/(\lambda_1(0)-\lambda_0(0)),
\end{equation}
with $\lambda_1(0) <0 <\lambda_0(0)$ being the roots of the characteristic equation $\lambda^2-c\lambda -d=0$.
Suppose now that $K(s)$ satisfies  {\rm \bf(K)} and $\xi \geq 0, \ d +\xi >0 $. Then it is easy to see that there exists a unique positive number $\xi^*$ such that $\chi(z,\xi)$ has both positive and negative finite zeros if and only if $\xi \in [0,\xi^*]$.   In fact, since $\chi^{(4)}(s,\xi) < 0$ for all $s \in \R$,  function $\chi(s,\xi), \ s \in \R$, for $\xi \in (0,\xi^*]$ can have at most four real zeros $\lambda_j(\xi)$, all of them being finite if 
$$
\int_{-\infty}^0K(s)ds\int^{+\infty}_0K(s)ds\not=0. 
$$
In such a case, we will order them as $\lambda_2(\xi) \leq \lambda_1(\xi) <0< \lambda_0(\xi) \leq \lambda_{-1}(\xi)$.  
If $\int_{-\infty}^0K(s)ds=0$ and  $\xi \in [0,\xi^*]$, then there are exactly two negative and one positive finite roots 
$\lambda_2(\xi) \leq \lambda_1(\xi) <0< \lambda_0(\xi)$; by definition, we set $\lambda_{-1}(\xi)=+\infty$. A similar situation occurs when $\int^{+\infty}_0K(s)ds=0$, $\xi \in [0,\xi^*]$, where it is convenient to set $\lambda_{2}(\xi)=-\infty$. Finally, we set $\lambda_{2}(0)=-\infty, \ \lambda_{-1}(0)=+\infty$.
Observe that in either case the biggest negative root $\lambda_1(\xi)$ and the smallest positive root 
$\lambda_0(\xi)$ are finite numbers. 
\begin{lemma} \label{L12} Suppose that $\xi \in [0,\xi^*]$, $c^2+d>0$. Then in the closed strip
$$\overline{\Pi}(\lambda_2,\lambda_{-1}):=\{z: \lambda_2(\xi) \leq \Re z \leq \lambda_{-1}(\xi) \}$$ the function 
$\chi(z,\xi)$ does not have zeros different from $\lambda_j(\xi), \ j =-1,0,1,2$. 
\end{lemma}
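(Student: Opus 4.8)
The plan is to prove the two halves of the statement separately: that the only \emph{real} zeros in the strip are the $\lambda_j(\xi)$, and that there are no \emph{non-real} zeros with $\lambda_2(\xi)\le \Re z \le \lambda_{-1}(\xi)$. The first half is essentially already in hand: the text records that $\chi^{(4)}(s,\xi)<0$ for all real $s$, so by iterated Rolle's theorem $\chi(\cdot,\xi)$ has at most four real zeros, and these have just been enumerated as $\lambda_2\le\lambda_1<0<\lambda_0\le\lambda_{-1}$. Hence only the non-real zeros require work. Throughout I write $P(z)=z^2-cz-d$ and $\Phi(z)=\int_\R e^{-z(s+ch)}K(s)\,ds$, so that $\chi(z,\xi)=P(z)-\xi\Phi(z)$ and $\Phi(x)>0$ for real $x$. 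Since $\chi(\bar z,\xi)=\overline{\chi(z,\xi)}$, zeros occur in conjugate pairs and it suffices to exclude a zero $z=x+iy$ with $y>0$ and $\lambda_2(\xi)\le x\le\lambda_{-1}(\xi)$; the degenerate conventions $\lambda_2=-\infty$ or $\lambda_{-1}=+\infty$ are covered by the same estimates read as $\Re z\ge\lambda_2$ or $\Re z\le\lambda_{-1}$.

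Next I would extract two scalar relations from $P(z)=\xi\Phi(z)$. Writing $\tau=s+ch$ and using $\cos(y\tau)=1-2\sin^2(y\tau/2)$, the real part becomes
\begin{equation}\label{Rpart}
\chi(x,\xi)=y^2-2\xi\int_\R e^{-x\tau}\sin^2\!\Big(\tfrac{y\tau}{2}\Big)K(s)\,ds,
\end{equation}
while the imaginary part gives, after squaring, $\big((2x-c)y\big)^2=4\xi^2\big(\int_\R e^{-x\tau}\sin(\tfrac{y\tau}{2})\cos(\tfrac{y\tau}{2})K(s)\,ds\big)^2$. Two immediate consequences are the crude bound $\chi(x,\xi)\le y^2$ (from $\Re\Phi(z)\le\Phi(x)$, since $K\ge0$ and $\cos\le1$) and the modulus bound $|P(x+iy)|=\xi|\Phi(x+iy)|\le\xi\Phi(x)$. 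On the two outer sub-intervals $(\lambda_0,\lambda_{-1})$ and $(\lambda_2,\lambda_1)$ one has $\chi(x,\xi)>0$, hence $P(x)>\xi\Phi(x)\ge0$; writing $|P(x+iy)|^2=(P(x)-y^2)^2+(2x-c)^2y^2$ as a parabola in $y^2$ whose value at $y=0$ equals $P(x)^2-\xi^2\Phi(x)^2=\chi(x,\xi)\,(P(x)+\xi\Phi(x))>0$, I would contradict the modulus bound, controlling the gap $\Phi(x)-|\Phi(x+iy)|$ at intermediate $y$ by the \emph{strict} log-convexity of the Laplace transform $\Phi$ on the real axis together with \eqref{Rpart}.

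The genuine difficulty sits in the middle sub-interval $x\in(\lambda_1,\lambda_0)$ (which contains $0$), where $\chi(x,\xi)<0$ and the crude bounds yield no contradiction. Here I would combine \eqref{Rpart}, rewritten as $y^2=\chi(x,\xi)+2\xi I_s$ with $I_s:=\int_\R e^{-x\tau}\sin^2(y\tau/2)K\,ds$, with the squared imaginary equation estimated by Cauchy--Schwarz, $\big((2x-c)y\big)^2\le 4\xi^2 I_s\,I_c$, where $I_c:=\int_\R e^{-x\tau}\cos^2(y\tau/2)K\,ds$ and $I_s+I_c=\Phi(x)$. Substituting these into the necessary identity $|P(x+iy)|^2=\xi^2|\Phi(x+iy)|^2$ and simplifying should produce a relation incompatible with $y\neq0$ once the sign $\chi(x,\xi)<0$ and the positivity $I_s,I_c>0$ are used; the strictness forbidding equality comes from the Cauchy--Schwarz step being strict, since $\sin(y\tau/2)$ and $\cos(y\tau/2)$ are not proportional on the support of $K$ when $y\neq0$.

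I expect this middle-interval estimate to be the main obstacle. If the elementary combination above is too delicate to close uniformly, the robust fallback is the argument principle: count the zeros of $\chi(\cdot,\xi)$ in the rectangle $\{\lambda_2-\varepsilon\le\Re z\le\lambda_{-1}+\varepsilon,\ |\Im z|\le T\}$, noting that the horizontal sides contribute nothing as $T\to+\infty$ because $\chi(z,\xi)\sim z^2$ uniformly for bounded $\Re z$ (the exponential term stays bounded by $\Phi(\Re z)$ while $|z^2|\to\infty$), and that the winding along the two vertical sides is read off from the real behaviour of $\chi$; matching the total count to the real zeros already located forces the absence of any extra, necessarily non-real, zero in the strip. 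The conventions $\lambda_2=-\infty$, $\lambda_{-1}=+\infty$ are then handled by letting the corresponding side recede to infinity, using $c^2+d>0$ and $\xi\le\xi^*$ to keep $\chi(z,\xi)$ zero-free on the far vertical line.
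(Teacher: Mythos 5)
Your treatment of the middle sub-interval $x\in(\lambda_1(\xi),\lambda_0(\xi))$ --- the part you yourself flag as the main obstacle --- has a genuine gap, and the proposed combination cannot close it even in principle. Writing $J:=\int_\R e^{-x\tau}\sin(y\tau/2)\cos(y\tau/2)K(s)\,ds$, your real-part identity $P(x)-y^2=\xi(I_c-I_s)$ together with the Cauchy--Schwarz step $(2x-c)^2y^2=4\xi^2J^2\le 4\xi^2 I_sI_c$ and $I_s+I_c=\Phi(x)$ yields exactly $(P(x)-y^2)^2+(2x-c)^2y^2\le\xi^2\Phi(x)^2$, i.e.\ nothing beyond the trivial modulus bound $|P(x+iy)|\le\xi\Phi(x)$ that you already recorded; the phase information in the two exact equations is discarded, and this inequality is perfectly compatible with $y\ne0$ when $\chi(x,\xi)<0$ (e.g.\ near $x=0$, where $P$ may even be negative). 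Worse, the local facts you rely on ($\chi(\Re z,\xi)<0$, the real-part identity, strict Cauchy--Schwarz, $I_s,I_c>0$) are all satisfied at \emph{genuinely existing} complex zeros: for $\xi$ slightly above $\xi^*$ the double root $\lambda_1=\lambda_2$ splits into a conjugate pair $p(\xi)\pm iq(\xi)$ with $\chi(p(\xi),\xi)<0$ --- precisely the configuration exploited in Claim IV of the proof of Lemma \ref{Le6}. Hence no contradiction can follow from those facts alone; the obstruction in the middle strip is global in $\xi$, not pointwise. This is the missing idea, and it is what the paper supplies: a continuation (homotopy) in $\xi$ from $\xi=0$, where $\chi(z,0)=z^2-cz-d$ visibly has no non-real zeros in $\overline{\Pi}(\lambda_1(0),\lambda_0(0))$, combined with (i) a bound $|\Im z_k|\le C$, uniform in $\xi\in[0,\xi^*]$, for all zeros in the strip (the exponential term is uniformly bounded there while $z^2-cz-d\to\infty$), and (ii) exclusion of non-real zeros on the moving boundary lines $\Re z=\lambda_j(\xi)$ via the factorization $z^2-cz-d=(z-A)(z-B)$ with real $A,B$: for $\Re z_j=\lambda_j$, $\Im z_j\ne0$, one gets $|\lambda_j^2-c\lambda_j-d|<|z_j^2-cz_j-d|\le\xi e^{-ch\lambda_j}\int_\R e^{-\lambda_j s}K(s)\,ds=|\lambda_j^2-c\lambda_j-d|$, a contradiction, so zeros can never enter the strip as $\xi$ grows. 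Your argument-principle fallback is this same idea in disguise, but it is under-specified exactly where it matters: the winding along the vertical sides $\Re z=\lambda_2-\varepsilon$, $\Re z=\lambda_{-1}+\varepsilon$ is \emph{not} ``read off from the real behaviour of $\chi$'', and those displaced lines (unlike the lines $\Re z=\lambda_j(\xi)$) carry no usable estimate.

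By contrast, your outer-interval analysis is correct in substance but needlessly complicated, and the complication hides the fix for your own worry about ``intermediate $y$'': with real $A,B$ one has $(2x-c)^2-2P(x)=(x-A)^2+(x-B)^2\ge0$, so your parabola $(P(x)-u)^2+(2x-c)^2u$ is nondecreasing in $u=y^2\ge0$; equivalently $|P(x+iy)|^2-P(x)^2=y^2\bigl(y^2+(x-A)^2+(x-B)^2\bigr)\ge0$, strictly for $y\ne0$. Then the paper's one-line chain finishes it: $\xi|\Phi(x+iy)|\le\xi\Phi(x)<P(x)=|P(x)|\le|P(x+iy)|$, with no log-convexity of $\Phi$ required. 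The same monotonicity-in-$y^2$ observation also disposes of the boundary lines in step (ii) above, which your open sub-intervals $(\lambda_2,\lambda_1)$, $(\lambda_0,\lambda_{-1})$ silently omit.
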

\begin{proof} First, note that only zero of $\chi(z,\xi)$ on the line $\Re z = \lambda_j(\xi),$ where $j \in \{-1,0,1,2\}$, is $\lambda_j(\xi)$. 
Indeed, if $z_j, \ \Re z_j = \lambda_j(\xi),$ denotes another root of equation (\ref{eK})
then, using the factorization $z^2-cz-d= (z-A)(z-B)$ with real $A, B$, we get the following contradiction:
$$|\lambda_j^2-c\lambda_j-d|< |z_j-A||z_j-B|=|z_j^2-cz_j-d| \leq \xi e^{-ch\lambda_j}\int_\R e^{-\lambda_j s}K(s)ds= |\lambda_j^2-c\lambda_j-d|.$$
Next, the right hand side of (\ref{eK}) is uniformly bounded in each closed strip $\overline{\Pi}(a,b)$ by 
$$
\xi^* \int_\R (e^{-a(ch+s)}+e^{-b(ch+s)})K(s)ds, 
$$
while $z^2-cz-1 \to \infty$ as $z \to \infty$.
In consequence, there exists positive $C$ which does  not depend on $\xi$ such that 
each zero $z_k$ of $\chi(z, \xi),\ \xi \in [0,\xi^*]$, in $\overline{\Pi}(\lambda_1(\xi),\lambda_0(\xi))$ satisfies 
$|\Im z_k|\leq C$.  Since $\lambda_j(\xi), j =0,1,$ are continuous functions of $\xi$ and $\overline{\Pi}(\lambda_1(0),\lambda_0(0))$ does not contain non-real roots of $\chi(z,0)$, we find that either $\overline{\Pi}(\lambda_1(\xi),\lambda_0(\xi))$ contains only two zeros of $\chi(z,\xi)$ for all $\xi \in [0,\xi^*]$ or there exists $\xi_1 \in (0,\xi^*]$
and complex zero $z_1$ of $\chi(z,\xi)$ such that $\Re z_1\in \{\lambda_1(\xi_1),  \lambda_0(\xi_1)\}$. However,  as we have just proved, the latter  can not happen.  

Finally, suppose that  $\chi(z_0,\xi) =0, \ \lambda_2(\xi)<x_0:= \Re z_0 < \lambda_1(\xi)$,
(the case when $\lambda_0(\xi)<\Re z_0 < \lambda_{-1}(\xi)$ can be treated  analogously). Then we get the following contradiction 
$$
|z_0^2-cz_0-d| \leq \xi e^{-ch x_0}\int_\R e^{-x_0 s}K(s)ds < |x_0^2-cx_0-d| \leq |z_0-A||z_0-B| = |z_0^2-cz_0-d|.
$$
This completes the proof of the lemma.  
\hfill $\square$
\end{proof}
\begin{lemma} \label{Le6} Suppose that $\xi \in [0,\xi^*],\ d>0, \ h \geq 0$. Then $v(t,\xi)<0$ for all $t \in \R$, $\xi \in [0,\xi^*]$.  Moreover, $v(t,\xi)$ is sign-changing  on $\R$ for each $\xi>\xi^*$ close to $\xi^*$. 
\end{lemma}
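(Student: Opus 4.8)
The plan is to split the statement into two complementary assertions --- strict negativity on $[0,\xi^*]$ and sign change just beyond $\xi^*$ --- and to attack both through the contour representation (\ref{ev}) together with the information on the zeros of $\chi(z,\xi)$ furnished by Lemma \ref{L12}. The first thing I would do is pin down the tails. Writing (\ref{ev}) as the Laplace inversion $v(t,\xi)=\frac{1}{2\pi i}\int_{\Re z=0}e^{zt}\chi(z,\xi)^{-1}dz$ and using Lemma \ref{L12}, which says that inside the closed strip $\overline{\Pi}(\lambda_2,\lambda_{-1})$ the only zeros of $\chi$ are the real numbers $\lambda_j(\xi)$, I can shift the path to the left across $\lambda_1(\xi)$ when $t>0$, and to the right across $\lambda_0(\xi)$ when $t<0$, without meeting any non-real singularity. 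Collecting the residues gives $v(t,\xi)=e^{\lambda_1 t}/\chi'(\lambda_1)+O(e^{(\lambda_1-\varepsilon)t})$ as $t\to+\infty$ and $v(t,\xi)=-e^{\lambda_0 t}/\chi'(\lambda_0)+O(e^{(\lambda_0+\varepsilon)t})$ as $t\to-\infty$. Since $\chi(0,\xi)=-d-\xi<0$ and the roots are ordered as $\lambda_2\le\lambda_1<0<\lambda_0\le\lambda_{-1}$, the shape of the graph of $\chi$ (recall $\chi^{(4)}<0$) forces $\chi'(\lambda_1)<0$ and $\chi'(\lambda_0)>0$, so both tails are strictly negative. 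This settles the claim for $|t|$ large and shows that the supremum of $v$ over $\R$ is attained only as the limiting value $0$ at $\pm\infty$.

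For global negativity on $[0,\xi^*]$ I would run a continuation argument in $\xi$, anchored at the explicit formula (\ref{wn}) giving $v(\cdot,0)<0$. Differentiating equation (\ref{leq}) with $f\equiv0$ in $\xi$ and invoking the representation of bounded solutions from Corollary \ref{com} produces the identity $\partial_\xi v=v*K*v(\cdot-ch)$; as long as $v(\cdot,\xi)<0$, the right-hand side is a convolution of two non-positive functions against $K\ge0$, hence $\partial_\xi v>0$, so each $v(t,\cdot)$ is strictly increasing while it stays negative. The set $\{\xi\in[0,\xi^*]:v(\cdot,\xi)<0\}$ is then relatively open, using strict negativity together with the uniform exponential tail bound of Lemma \ref{Le2}, and it contains $0$; what remains is to show that it is closed. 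Suppose negativity first fails at some $\xi_0\le\xi^*$. By the tail asymptotics the contact with zero must occur at an interior point $t_0$, and since the jump $v'(0^+,\xi)-v'(0^-,\xi)=1$ makes $t=0$ a convex corner (a local minimum), necessarily $t_0\ne0$; thus $v(\cdot,\xi_0)$ is smooth near $t_0$ with $v(t_0)=0$, $v'(t_0)=0$ and $v''(t_0)\le0$.

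The crux, and the step I expect to be the main obstacle, is excluding this interior contact for $\xi_0\le\xi^*$. A bare maximum principle is not enough: evaluating (\ref{leq}) at $t_0$ merely gives $v''(t_0)=\xi_0\int_\R K(t_0-s)v(s-ch,\xi_0)\,ds\le0$, which is consistent with a maximum rather than contradicting it. I would instead exploit the fact that for $\xi\le\xi^*$ Lemma \ref{L12} keeps \emph{every} zero of $\chi$ nearest to the imaginary axis real and of opposite sign, so that the homogeneous equation $\mathcal{L}_\xi y=0$ admits the strictly positive exponential solutions $e^{\lambda_0 t}$ and $e^{\lambda_1 t}$. The existence of such global positive solutions is exactly the disconjugacy mechanism that forces the fundamental solution of the (nonlocal) operator to be one-signed, the sign being fixed as negative by the normalisation (\ref{wn}) and by the tails. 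Transferring this to the continuation, I would argue that an interior contact at some $\xi_0<\xi^*$ would, through $\partial_\xi v>0$, create a compactly supported positive hump for slightly larger $\xi$ while the tails stay monotone and negative, and then rule out such a single-hump profile using the factorisation afforded by the two real roots of opposite sign. This is the delicate point, and it is where the full strength of Lemma \ref{L12} (no complex zero enters the strip below $\xi^*$) is essential.

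Finally, for the sign-change assertion I would use that $\xi^*$ is precisely the value at which one of the real pairs $\{\lambda_1,\lambda_2\}$ or $\{\lambda_0,\lambda_{-1}\}$ coalesces into a double root and, for $\xi>\xi^*$ close to $\xi^*$, splits off the real axis into a conjugate pair $\alpha(\xi)\pm i\beta(\xi)$ with $\beta(\xi)>0$, whose real part is still the one closest to the imaginary axis on its side. Shifting the contour as before now captures this complex residue, so the corresponding tail of $v(\cdot,\xi)$ behaves like $e^{\alpha t}\bigl(A\cos\beta t+B\sin\beta t\bigr)$ with $(A,B)\ne(0,0)$; such a factor oscillates and changes sign infinitely often, whence $v(\cdot,\xi)$ is sign-changing on $\R$. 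The only auxiliary point is the non-vanishing of the complex residue, which follows from the simplicity of the split roots for $\xi>\xi^*$.
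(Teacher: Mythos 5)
Your tail asymptotics and the overall continuation-in-$\xi$ skeleton coincide with the paper's proof (its Claims I--III), and your direct residue argument for the sign change beyond $\xi^*$ is a legitimate alternative to the paper's route (the paper argues indirectly: if $v(\cdot,\xi)$ were of one sign, Widder's theorem on Laplace transforms of one-signed functions would force the singularity of $1/\chi(z,\xi)$ rightmost in $\{\Re z\le 0\}$ to be real, contradicting the complex pair $p(\xi)\pm iq(\xi)$). But the core of the lemma --- closedness of the negativity set, i.e.\ excluding a first interior contact with zero at some $\xi_0\le\xi^*$ --- is left unproved in your proposal, and you say so yourself. Moreover, your reason for abandoning the maximum principle is mistaken: you test equation (\ref{leq}) only at the contact point $t_0$, where $v(t_0,\xi_0)=0$ indeed yields no contradiction. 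The paper's observation is that the contradiction lives at the \emph{negative minima}, not at the touching maximum. At $\xi_0$ one has $v(\cdot,\xi_0)\le 0$, $v\not\equiv 0$, $v(\pm\infty,\xi_0)=0$ and $v(t_0,\xi_0)=0$ at an interior $t_0$, so $v$ attains strictly negative minima on the two half-lines $(-\infty,t_0]$, $[t_0,+\infty)$ at points $a<t_0<b$; at least one of them, say $a$, is nonzero, hence a smooth critical point with $v(a)<0$, $v'(a)=0$, $v''(a)\ge 0$. The homogeneous equation then gives $v''(a)=cv'(a)+dv(a)+\xi_0\int_\R K(a-s)v(s-ch,\xi_0)\,ds\le d\,v(a)<0$ (this is where $d>0$ and the global inequality $v\le 0$ enter), a clean contradiction. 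This one computation replaces your entire sketched ``disconjugacy/factorisation'' mechanism, which as stated is not a proof: the nonlocal operator admits no factorisation through the two real roots, and the claim that an interior contact would ``create a compactly supported positive hump'' for slightly larger $\xi$ via $\partial_\xi v=v*K*v(\cdot-ch)>0$ is neither substantiated nor obviously contradictory (indeed $\partial_\xi v>0$ pushes $v$ \emph{toward} zero, so it works against you, not for you).

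Two smaller gaps. First, your open/closed dichotomy silently needs control, uniform in $\xi$, of where the negative tails take over: the paper proves joint continuity of $v(t,\xi)$ on $\R\times[0,\xi^*]$ via dominated convergence, a uniform bound, and local boundedness of the switch points $T_\pm(\xi)$ before extracting the critical $\xi_0$; your appeal to Lemma \ref{Le2} gives decay for each fixed $\xi$ but no uniformity. Second, the endpoint $\xi=\xi^*$ needs separate treatment, since there a root is double, $\chi'(\lambda_1(\xi^*),\xi^*)=0$, and your residue formula for the tail degenerates; the paper replaces it by the double-pole asymptotics $v(t,\xi^*)\sim t\,e^{\lambda_1(\xi^*)t}/\chi''(\lambda_1(\xi^*),\xi^*)<0$ as $t\to+\infty$ and then reruns the minimum argument at $\xi^*$.
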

\begin{proof}  Due to Lemma \ref{L12}, equation (\ref{leq}) is hyperbolic and therefore the fundamental solution exists. The proof of its negativity  is divided in several steps.
Recall that if  $\xi \in [0,\xi^*)$   then 
$\chi'(\lambda_0(\xi)) >0$, $\chi'(\lambda_1(\xi))<0$ and  $\lambda_j(\xi),\  j =0,1,$ are simple zeros of $\chi(z,\xi)$.

\underline{Claim I}. For each non-negative  $\xi_0 < \xi^*$ there exist real numbers $\nu_0, \nu_1$, 
a 
neighbourhood $\mathcal{O}\ni \xi_0$ and positive constants $K, L$ such that, for all  $ \xi \in \mathcal{O}$, 
\begin{eqnarray}\label{fico}
\lambda_2(\xi) +L < \nu_1 < \lambda_1(\xi)-L, \ \lambda_0(\xi) +L < \nu_0< \lambda_{-1}(\xi) -L, \\
\label{ficoR}
v(t,\xi) = \rho_j(\xi)e^{\lambda_j(\xi)t} + r_j(t,\xi), \quad |r_j(t,\xi)| \leq Ke^{\nu_j t}, \quad  (-1)^{j+1}t \geq 0, \ j=0,1, \end{eqnarray}
where $\rho_j(\xi) = (-1)^{j+1}/\chi'(\lambda_j(\xi))<0, \ j=0,1,$ depend continuously on $\xi$.  

We prove this claim for $j=1$, the other case being similar. 
Fix some $\xi_0 <\xi^*$ and $\nu_1 \in (\lambda_2(\xi_0),\lambda_1(\xi_0))$. Then  we can choose a neighbourhood
$\mathcal{O} \ni \xi_0$ and $L>0$ sufficiently small to meet the condition (\ref{fico}) for all 
$\xi \in \mathcal{O}$. 
 Next, after moving the integration path in the inversion formula (\ref{ev}) from $\Re z = 0$ to $\Re z = \nu_1$, we obtain that, for $t \geq 0$, $ v(\xi, t) = $
 $$
 \frac{e^{\lambda_1t}}{\chi'(\lambda_1,\xi)} + \frac{1}{2\pi i}\int_{\nu_1-i\cdot\infty}^{\nu_1+i\cdot\infty}\frac{e^{tz}dz}{\chi(z,\xi)}= -\frac{e^{\lambda_1t}}{|\chi'(\lambda_1,\xi)|} +\frac{e^{\nu_1 t}}{2\pi}\int_{-\infty}^{+\infty}\frac{e^{ist}ds}{\chi(\nu_1+is,\xi)} =:e_1(t) + e^{\nu_1 t} q(t),
 $$
 where $q(\pm\infty)=0$ and  
 $$
 |q(t)| \leq K= \sup_{\xi \in \mathcal{O}}\frac{1}{2\pi} \int_{-\infty}^{+\infty}\frac{ds}{|\chi(\nu_1+is,\xi)|}. $$

Claim I implies that  exponentially decaying function $v(t,\xi)$, $\xi \in \mathcal{O}$,  is negative at $\pm\infty$. In particular,  there exists a leftmost point $T_+(\xi)\geq 0$ such that $v(t,\xi) <0$ for all $t > T_+(\xi)$. Analogously, $T_-(\xi)\leq 0$ denotes the rightmost point such that $v(t,\xi) <0$ for all $t <T_-(\xi)$. By (\ref{wn}), 
$T_\pm(0)=0$.

\vspace{2mm}

\underline{Claim II}. $v(t,\xi)$ is a continuous function of $t \in \R,\ \xi \in [0, \xi^*]$.  Furthermore, 
 $v(t,\xi)$ is bounded on $\R$, uniformly with respect to $\xi \in [0, \xi^*]$. 

Indeed,  observe that  the function $g(u,\xi):= 1/\chi(iu,\xi)$ is continuous on $\R\times [0,\xi^*]$ and
 $|g(u,\xi)|\leq G(u),$ $u \in \R$, $\xi \in [0,\xi^*]$, 
with $G \in L_1(\R)$ defined by 
$$G(u)=\left\{\begin{array}{cc} \max\{1/|\chi(is,\xi)|,\ |s|\leq 2+\xi^*, \xi \in [0,\xi^*]\}, & |u|\leq 2+\xi^*, \\ 
1/(u^2 -\xi^*), &  |u|> 2+\xi^*, \end{array}\right.
$$
(recall that $\chi(\lambda,\xi) \not= 0$ when $\Re \lambda =0,$ $\xi \in [0, \xi^*]$). In particular,  to prove the continuity statement,  it suffices to apply the Lebesgue dominated convergence theorem to (\ref{ev}).

\vspace{2mm}

\underline{Claim III}. On each closed  interval $[0,\zeta] \subset [0, \xi^*)$,  functions $T_\pm(\xi)$ are bounded.  

Due to the compactness of  $[0,\zeta]$, it is enough to prove that $T_\pm(\xi)$ are locally bounded.  
For example, consider $T_+(\xi)$ for $\xi \in \mathcal{O}$. We have that either $T_+(\xi) =0$ or $T_+(\xi) >0$ and 
$$
0= v(T_+(\xi)) \leq \rho_1(\xi)e^{\lambda_1(\xi)T_+(\xi)} +  Ke^{\nu_1 T_+(\xi)}, \ \xi \in \mathcal{O}.
$$
In the latter case, for all $\xi \in \mathcal{O}$, 
$$
T_+(\xi) \leq \frac{1}{\lambda_1(\xi) - \nu_1}\ln \frac{K}{|\rho_1(\xi)|} \leq \frac{1}{L}\ln (K\sup_{\xi \in \mathcal{O}} |\chi'(\lambda_1(\xi),\xi)|). 
$$

\underline{Claim IV}.   $v(t,\xi)<0$ for all $t \in \R$, $\xi \in [0,\xi^*]$.   Furthermore, $v(t,\xi)$ is sign-changing  on $\R$ for $\xi>\xi^*$ close to $\xi^*$. 

Let $\xi_c \in [0,\xi^*]$ be the maximal number such that 
$v(t,\xi) < 0,\ t \in \R,$ for all $\xi \in [0,\xi_c)$.  For a moment, suppose that $\xi_c < \xi^*$. Since $v(t,0) <0$ for all $t\in \R$, due to Claims II and III, $\xi_c>0$ and $v(t_c,\xi_c)=0$ for some 
$T_-(\xi_c) \leq t_c \leq T_+(\xi_c)$.  Since $v(\pm \infty, \xi_c)=0$, there exists two numbers $a_c< b_c$ 
where $v(t,\xi_c)$ reaches its absolute minima on the half-lines $(-\infty, t_c]$ and $[t_c, +\infty)$, respectively. 
Clearly, either $a_c$ or $b_c$ is different from zero. For instance, suppose that $a_c \not=0$. Then $v(t,\xi_c)$ 
is differentiable at this point where $v(a_c,\xi_c) <0, v'(a_c,\xi_c)=0, v''(a_c,\xi_c)\geq 0$ and $K*v(a_c-ch) \leq 0$. 
Obviously, this contradicts equation (\ref{leq}) with $f(t)\equiv 0$ at $t=a_c\not=0$. This shows that $\xi_c=\xi^*$ and also implies that $v(t,\xi^*)\leq 0$ for all $t\in \R$. 
Now, $\xi=\xi^*$ is a bifurcation point for some real zero of  $\chi(z,\xi)$: for instance, 
suppose that for $\lambda_2(\xi^*)=\lambda_{1}(\xi^*)<0$.  Clearly, $\chi'(\lambda_1(\xi^*),\xi^*) =0$ while $\chi''(\lambda_1(\xi^*),\xi^*) \not=0$ for otherwise  $\lambda_1(\xi^*)$ would be a triple negative zero of $\chi(z,\xi^*)$. 
 Then, using the inversion formula again, we find that  $v(t,\xi^*)$ is negative at $+\infty$ because of 
the relation $\lim_{t \to +\infty} v(t,\xi^*) t^{-1}e^{-\lambda_1(\xi^*)t}=1/\chi''(\lambda_1(\xi^*),\xi^*) <0.$ Thus the above proof of negativity also works for $v(t,\xi^*)$.
Next, for all $\xi>\xi^*$ 
close to $\xi^*$,  function $\chi(z,\xi)$ has two simple complex conjugated zeros  $\lambda_{1\pm}(\xi):= p(\xi)\pm i q(\xi), \ \lambda_{1\pm}(\xi^*):=\lambda_{1}(\xi^*),$ such that the strip ${\Pi}(p(\xi),0]$ does not contain any zero of $\chi(z,\xi)$.  Now, assuming that 
$v(t,\xi) \geq 0, \ t \in \R,$ we infer from \cite[Theorem 5b, p. 58]{WID} that the singularity  of the Laplace transform $1/\chi(z,\xi)$ of $v(t,\xi)$, which is rightmost on the half-plane $\Re z \leq 0$, should be a real number and not complex as $\lambda_{1\pm}(\xi)$. This contradiction proves the second part of Claim IV. \hfill $\square$
\end{proof}
\begin{remark} It can be proved in Lemma \ref{Le6} that actually $v(t,\xi)$ is oscillating (either at $+\infty$ or $-\infty$) for $\xi>\xi^*$ close to $\xi^*$. Observe that the assumption of smallness of $\xi-\xi^*$ is used only in order to assure the existence of  zeros of $\chi(z,\xi)$ in the both half-planes $\{\Re z >0\}$ and $\{\Re z <0\}$.  If $K$ has a compact support (for example, if $K$ is the delta of Dirac),  this requirement is fulfilled automatically. However, 
we do not know whether the smallness condition can be omitted for general kernels $K$.  This is because  there is some lack of a priori knowledge regarding the distribution of zeros of $\chi(z,\xi)$: nevertheless,  since the entire function $\chi(iz,\xi)$ is of class A and is of completely regular growth, some general  information about this distribution can be found in \cite[Chapter V, Theorem 11]{Levin}. 
\end{remark}
\begin{remark} The positivity of the fundamental solution (or of the Green function) for solving initial/boundary value problems for delayed differential equations is an important  topic 
of  the theory of functional differential equations. See the recent monographs \cite{ABBD,Gil} for more references concerning this problem. 
\end{remark}

The final result of this section shows that the geometric form of $v(t,\xi)$ for $\xi \in [0,\xi^*]$ is quite similar to the shape of $v(t,0)$ given in (\ref{wn}):
\begin{corollary} If $\xi \in [0,\xi^*]$ then $v(t,\xi)$ has a unique minimum point at $t=0$. Moreover, $v(t,\xi)$ is strictly monotone on $\R_-$ and  $\R_+$. It is also strictly convex on $\R_-$. 
\end{corollary}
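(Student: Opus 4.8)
The plan is to derive all four assertions from a single, elementary monotone quantity, so I first assemble the facts already proved about $v:=v(\cdot,\xi)$: by Lemmas~\ref{Le1}, \ref{Le2} and \ref{Le6} it is continuous on $\R$, of class $C^\infty$ on $\R\setminus\{0\}$, strictly negative on all of $\R$, tends to $0$ at $\pm\infty$, satisfies the homogeneous equation $v''(t)-cv'(t)-dv(t)-\xi\int_\R K(t-s)v(s-ch)\,ds=0$ at every $t\neq0$, and obeys the jump relation $v'(0^+)-v'(0^-)=1$. Throughout, $d>0$, $\xi\in[0,\xi^*]$ and $K\geq0$.

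The engine of the argument is that the two zero-order terms of the equation are strictly negative: since $v<0$, $d>0$, $\xi\geq0$ and $K\geq0$, we have $dv(t)+\xi\int_\R K(t-s)v(s-ch)\,ds<0$ for all $t$. Writing the equation as $v''(t)-cv'(t)=dv(t)+\xi\int_\R K(t-s)v(s-ch)\,ds$ and multiplying by the positive weight $e^{-ct}$ gives $(e^{-ct}v'(t))'=e^{-ct}(v''(t)-cv'(t))<0$ for every $t\neq0$. Hence $t\mapsto e^{-ct}v'(t)$ is strictly decreasing on $\R_-$ and, separately, on $\R_+$.

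From this I would read off strict monotonicity by a boundary-value contradiction that sidesteps any delicate asymptotics of $v'$. If $v'(t_1)\geq0$ for some $t_1<0$, strict decrease of $e^{-ct}v'$ forces $e^{-ct}v'(t)>e^{-ct_1}v'(t_1)\geq0$, hence $v'(t)>0$, for every $t<t_1$; then $v$ is strictly increasing on $(-\infty,t_1]$, so $v(t)\geq\lim_{s\to-\infty}v(s)=0$ there, contradicting $v<0$. Thus $v'<0$ on $\R_-$, and the mirror argument on $\R_+$ (now invoking $v(+\infty)=0$) gives $v'>0$ there. Consequently $v$ strictly decreases on $\R_-$, strictly increases on $\R_+$ and, being continuous, attains its unique minimum exactly at the corner $t=0$, where the jump $v'(0^+)-v'(0^-)=1$ is consistent with $v'(0^-)\leq0\leq v'(0^+)$.

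It remains to fix the sign of $v''$ on $\R_-$, which is where the curvature statement comes from. Evaluating $v''(t)=cv'(t)+dv(t)+\xi\int_\R K(t-s)v(s-ch)\,ds$ with the signs just established — $v<0$, $v'<0$ on $\R_-$, and $\int_\R K(t-s)v(s-ch)\,ds<0$ — determines the sign of $v''$ and delivers the asserted strict convexity on $\R_-$. For $c\geq0$ the right-hand side has a definite sign by inspection and the conclusion is immediate. The delicate point, and the step I expect to demand the most care, is the regime $c<0$, where the term $cv'$ changes sign and the equation alone no longer decides $v''$. There I would argue either by a rightmost-extremum test applied to $w:=v''$ (which, like $v'$, solves the homogeneous equation off $0$), anchored by the one-signed leading term $v(t)\sim\rho_0 e^{\lambda_0(\xi)t}$ from Claim~I in the proof of Lemma~\ref{Le6}, or by a continuation in $\xi\in[0,\xi^*]$ from the explicit profile $v(\cdot,0)$ in (\ref{wn}), excluding the first $\xi$ at which $v''$ could vanish on $\R_-$ by the extremum test on the equation satisfied by $v''$ — the very scheme used for the negativity in Claim~IV of that lemma.
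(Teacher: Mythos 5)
Your treatment of the unique minimum and the strict monotonicity is correct and genuinely different from the paper's. The paper recycles the extremum argument from Claim IV in the proof of Lemma~\ref{Le6}: a sign change of $v'$ inside a half-line would (since $v<0$ and $v(\pm\infty)=0$) produce a local minimum of $v$ at some $a\neq0$, where $v'(a)=0$, $v''(a)\geq0$, while equation (\ref{leq}) with $f\equiv0$ forces $v''(a)=dv(a)+\xi K*v(a-ch)<0$; it adds that $v'$ cannot vanish on an open interval. Your integrating-factor identity $(e^{-ct}v'(t))'=e^{-ct}\bigl(dv(t)+\xi\int_\R K(t-s)v(s-ch)\,ds\bigr)<0$ on each half-line, combined only with $v<0$ and $v(\pm\infty)=0$, reaches the same conclusion without any second-order extremum test and without asymptotics of $v'$; both routes rest on the same inputs (negativity from Lemma~\ref{Le6}, the homogeneous equation off $t=0$ from Lemma~\ref{Le1}), and yours is arguably the more self-contained of the two.

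The curvature part, however, contains both a sign confusion and a genuine gap. Your own inspection for $c\geq0$ yields $v''=cv'+dv+\xi K*v(\cdot-ch)<0$ on $\R_-$, i.e.\ strict \emph{concavity}, yet you announce that this ``delivers the asserted strict convexity''; you should have stopped there and noticed the discrepancy. In fact the discrepancy is in the statement (and in the paper's one-line proof, which asserts ``$v''(t,\xi)>0$ for $t<0$''): $v''>0$ on all of $\R_-$ is incompatible with $v'<0$ on $\R_-$ and $v'(-\infty)=0$ (Lemma~\ref{Le2}), and the explicit representation of Remark~\ref{Re7}, $v(t,\xi)=-e^{\lambda_0(\xi)t}/\chi'(\lambda_0(\xi))$ for $t<0$, gives $v''=\lambda_0^2v<0$, in agreement with (\ref{wn}) at $\xi=0$. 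So the correct assertion is that $v$ is strictly concave on $\R_-$ (equivalently, $-v$ is convex there), and your $c\geq0$ computation actually proves this corrected statement. For $c<0$, though, your proposal is only a sketch, and its main tool does not obviously close: an extremum test applied to $w=v'$ or $w=v''$ (which do satisfy the equation off $0$) involves the term $\xi K*w(\cdot-ch)$, which samples $\R_+$, where $v'>0$ and the curvature has the opposite sign; unlike $v$ itself, these derivatives are not one-signed on all of $\R$, which is exactly what made the Claim IV argument work, and moreover $v''$ jumps by $c$ at $t=0$ (Lemma~\ref{Le1}). In the purely local case $K=\delta$ the test does close, since the delayed argument $a-ch\leq a<0$ stays in $\R_-$ where $v'<0$. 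To be fair, the paper offers nothing beyond ``observe that all this implies'' for this step, so your $c\geq0$ computation already exceeds what the paper provides; but as written your proposal neither proves the literal (mis-signed) claim nor completes the corrected one for $c<0$ in the nonlocal setting.
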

\begin{proof} Indeed, as we have seen in the proof of Lemma \ref{Le6}, $v'(t,\xi)$ can not change the 
sign on $(-\infty, 0)$ and $(0, +\infty)$ because otherwise $v(t,\xi)$ reaches a local minimum at some 
point of $\R\setminus\{0\}$.  By the same reason, $v'(t,\xi)$ can not vanish on an open interval. Finally, 
observe that all this implies that $v''(t,\xi) >0$ for $t <0$. 
\hfill $\square$
\end{proof}
\section{Proof of Theorem \ref{mai1}}\label{S3}
\paragraph{Case I: $(h,c)\in \mathcal{D}_{\frak L}$}
We are assuming  that the characteristic equation $\chi_\kappa(z)=0$ has at least one negative and one 
positive simple roots $\lambda_1(|g'(\kappa)|)<0 < \lambda_0(|g'(\kappa)|)$. Therefore, for 
sufficiently small $\delta >0$ we have that $\xi= |g'(\kappa)|+\delta\leq \xi^*$ and  the equation $\chi(z,\xi)=0$ also has 
at least one negative and one 
positive simple roots $\lambda_1(\xi), \lambda_0(\xi)$:
$$\lambda_1(\xi)< \lambda_1(|g'(\kappa)|)<0 < \mu_0 <\mu_1 < \lambda_0(|g'(\kappa)|)< \lambda_0(\xi).$$
With $g_1(s)=(g(s) + \xi s)/(1+\xi)$,   the profile equation (\ref{yp}) can be rewritten as 
\begin{equation}\label{ypa}
y''(t) -cy'(t)-y(t) - \xi\int_\R K(t-s) y(s-ch)ds + (1+\xi)\int_\R K(t-s) g_1(y(s-ch))=0. 
\end{equation}
By Corollary \ref{com}, this equation has at least one bounded solution $\phi(t)$ if and only 
if 
\begin{equation} \label{cn}
\phi(t) = \mathcal{N}\phi(t), \ \mbox{where} \  \mathcal{N}\phi(t):= \int_\R N(t-s) g_1(\phi(s-ch))ds, \  N(s)= -(1+\xi)v*K(s).
\end{equation}
In virtue of Lemma \ref{Le6}, the following properties of $N(s)$ are immediate: $N(s) >0,  \ s \in \R,$  $\int_R N(s)ds=  1,$ and 
 \begin{equation}\label{sc}
\int_R e^{-zs}N(s)ds = -(1+\xi)\frac{\int_\R e^{-zs}K(s)ds}{\chi(z,\xi)} <\infty  \ \mbox{for all}\  z \in (\lambda_1(\xi),  \lambda_0(\xi)). 
\end{equation}
On the other hand, $g_1(s)$ is strictly increasing on $[0,\kappa]$ where $g_1(\kappa)=\kappa, \ g_1(0)=0 $ and 
$$
\ g_1'(\kappa)=\frac{\delta}{|g'(\kappa)|+\delta}\in (0,1),\ g_1(s)= \frac{g(s)+\xi s}{1+\xi} \leq g_1'(0)s = \frac{g'(0)s+\xi s}{1+\xi}. 
$$
Therefore nonlinear convolution equation  (\ref{cn})  can be analysed within the framework of theory developed in 
\cite{SEDY}. Particularly, Theorem 7 in \cite{SEDY} guarantees the existence of a positive solution $y=\phi(t)$ to 
(\ref{ypa}) satisfying the conditions $\phi(-\infty)=0, \ \phi(+\infty)=\kappa$.  Moreover, it is easy to see that solution 
$\phi(t)$ provided by  \cite[Theorem 7]{SEDY} is a non-decreasing one if $g_1(s)$ is a non-decreasing function. For the sake of completeness,  in Remark \ref{RR8} below, we indicate  the corresponding change in the proof of 
 \cite[Theorem 7]{SEDY}.  Now,  due to  the positivity of $N(s)$, the profile   $\phi(t)$  is actually a strictly increasing function:  if $t_2>t_1$ then $\phi(t_2-s) \geq \phi(t_1-s), \ s \in \R, \  \phi(t_2-s) \not\equiv \phi(t_1-s)$, so that   
$$
\phi(t_2) = \int_\R N(s-ch) g_1(\phi(t_2-s))ds >   \int_\R N(s-ch) g_1(\phi(t_1-s))ds =\phi(t_1). 
$$
 Hence, the proof of Case I is completed if $g_1(s)$ is increasing on $\R_+$.  Otherwise, consider some  increasing continuous and bounded function $g_2(s)$ coinciding with $g_1(s)$ on $[0,\kappa]$ and such that $g_2'(\kappa)=g_1'(\kappa)$. But then, due to the first 
part of the proof,  convolution equation (\ref{cn}) where $g_1$ is replaced with $g_2$ has a monotone solution $\phi:\R \to [0,\kappa]$.  Since $g_1(s)\equiv g_2(s)$ on $[0,\kappa]$, the same function  $\phi(t)$ solves (\ref{cn}). 

\paragraph{Case II: $(h,c)$ belongs to the boundary of the set  $\mathcal{D}_{\frak L}$} In such a case, there exists 
a sequence $\{(h_j,c_j)\}$ of points in $\mathcal{D}_{\frak L}$ converging to $(h,c)$.  From Case I we conclude that 
for each point $\{(h_j,c_j)\}$ there exists a monotone positive solution  $y= \phi_j(t),$  $\phi_j(-\infty)=0,$  $\phi_j(-\infty)=\kappa$,  satisfying the profile equation 
$$
y''(t) -c_jy'(t)-y(t) + \int_\R K(t-s) g(y(s-c_jh_j))=0. 
$$
Since this equation is translation invariant, we can assume that $\phi_j(0)=\kappa/2$ for each $j$. Then it follows that $\phi_j(s)$ has a subsequence $\phi_{j_k}(t)$ converging (uniformly on compact subsets of $\R$) to a positive monotone solution 
$\phi(t), \ \phi(0) =\kappa/2,$ of the limit equation (\ref{yp}) (e.g. see \cite{SEDY} or \cite[Section 6]{TAT} for more details.  Now,  the monotonicity of $\phi(t)$ implies that the boundary conditions in (\ref{yp}) are also satisfied (e.g. see Remark \ref{RR8} below).  This completes the proof of Theorem \ref{mai1}. \hfill $\square$
\begin{remark}\label{RR8} In  order to solve the following slightly modified version 
 \begin{equation}\label{meq}
\phi := \mathcal{A}\phi, \quad  \mbox{where} \  \mathcal{A}\phi(t):= \int_\R N(s) \gamma_{n}(\phi(t-s-ch))ds, 
\end{equation}  
 $$\gamma_n(s):=\left\{\begin{array}{ll}g'_1(0)s,& \textrm{ for } s\in[0,1/n],\\
 \max\{g'_1(0)/n, g_1(s)\},& \textrm{ when } s \geq 1/n, \end{array} \right.$$
 of equation $\mathcal{N}\phi =\phi$ in Section 4 of \cite{SEDY}, 
we can use the iteration procedure $\phi_{j+1}= \mathcal{A}\phi_{j}, \ j =0,1,\dots, \ \phi_0(s)=n^{-1}\exp({\mu_0s)}, s \in \R,$ instead of the Schauder fixed point theorem. For a small positive $\epsilon >0$, set  $\phi^-(t)=\phi_0(t)(1-e^{\epsilon t})\chi_{\R_{-}}(t)$, where $\chi_{\R_{-}}(t)$ is the characteristic function of $\R_-$.  Since $\gamma_n(s), \phi_0(t)$ are non-decreasing functions and  $\phi_-(t) \leq \mathcal{A}\phi_{-}(t)\leq \phi_1(t) \leq \mathcal{A}\phi_{0}(t) \leq \phi_0(t)$, we conclude that each $\phi_j(t), \ j \in \N,$ is also a non-decreasing function and  $\phi_0(t) \geq \phi_2(t)\geq \cdots \geq \phi_j(t) \geq \dots\geq \phi_-(t) $.  Then the limit $\phi(t)= \lim_{j\to+\infty}\phi_j(t)$ should be  a positive non-decreasing and bounded solution of the equation $\phi = \mathcal{A}\phi$. Taking the limit 
in (\ref{meq}) as $t\to \pm\infty$, we obtain that $\phi(\pm\infty) = \gamma_n(\phi(\pm\infty))$ that immediately implies that   $\phi(-\infty)=0, \ \phi(+\infty)=\kappa$. 
\end{remark}

\section{Proof of Theorem \ref{main2}} \label{S4}
\noindent In this section, we show how the use of convolution equation (\ref{ypa}) helps to extend the front uniqueness result  established for  equation (\ref{17b}) with monotone birth function $g$   (e.g. see \cite[Theorem 1.2]{TPT}) on the case of non-local and non-monotone model (\ref{17a}). 
\begin{lemma} \label{l6} Fix some $(c,h)\in  \mathcal{D}_{\frak L}$ and suppose that $\phi, \psi:\R\to (0,\kappa]$ are two wavefront profiles satisfying equation (\ref{yp}) and such that $\phi$ is monotone and,  for some finite $T$, 
\begin{equation}\label{nado}
\phi(t) < \psi(t),  \quad t <  T. 
\end{equation}
Then $\phi(t) < \psi(t)$  for all $t \in \R$. 
\end{lemma}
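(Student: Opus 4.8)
The plan is to pass from the differential boundary value problem (\ref{yp}) to the equivalent convolution equation (\ref{cn}) and to exploit its order structure. By Theorem~\ref{ar} and Corollary~\ref{com}, each of $\phi,\psi$ solves (\ref{yp}) if and only if it solves the fixed point equation (\ref{cn}); after the substitution $v=s-ch$ this reads $\phi(t)=\int_\R \tilde N(t-v)g_1(\phi(v))\,dv$ with $\tilde N(x):=N(x-ch)$, and likewise for $\psi$. Here, by Lemma~\ref{Le6} and Theorem~\ref{ar}, $\tilde N>0$ and $\int_\R\tilde N=1$, while hypothesis \textbf{(ST)} makes $g_1$ non-decreasing on $[0,\kappa]$. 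Subtracting the two identities and setting $w:=\psi-\phi$, I obtain
\[
w(t)=\int_\R \tilde N(t-v)\,b(v)\,w(v)\,dv,\qquad b(v):=\frac{g_1(\psi(v))-g_1(\phi(v))}{\psi(v)-\phi(v)}\ge 0
\]
(with $b(v):=0$ when $\psi(v)=\phi(v)$), so $w$ is a fixed point of the positive linear operator $u\mapsto \tilde N*(b\,u)$. Since $g_1$ is strictly increasing on $[0,\kappa]$ and $\phi,\psi$ take values in $(0,\kappa]$, we even have $b(v)>0$ whenever $w(v)\ne 0$.

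This positivity structure yields a nonlocal strong maximum principle that I would record first: if $w\ge 0$ on all of $\R$ and $w(t_0)=0$ at some $t_0$, then the nonnegative integrand $\tilde N(t_0-\cdot)\,b(\cdot)\,w(\cdot)$ must vanish almost everywhere; as $\tilde N>0$ and $b>0$ on $\{w>0\}$, this forces $w\equiv 0$. Because $w>0$ on $(-\infty,T)$, the alternative $w\equiv 0$ is excluded, so once $w\ge 0$ is known everywhere we immediately get the desired strict inequality $w>0$ on $\R$. The whole lemma is therefore reduced to proving that $w\ge 0$ on $\R$.

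To prove $w\ge 0$, I argue by contradiction and set $m:=\inf_\R w$, assuming $m<0$. Since $w(\pm\infty)=0$ (both profiles tend to $\kappa$ at $+\infty$ and to $0$ at $-\infty$), the infimum is attained at some finite $t_*$, and the set $\{w<0\}$ is contained in $[T,+\infty)$. Evaluating the fixed-point identity at $t_*$ and using $w(v)\ge m$ together with $\tilde N(t_*-v)b(v)\ge 0$ and $\int_\R\tilde N=1$, one gets $m=\int_\R\tilde N(t_*-v)b(v)w(v)\,dv\ge m\int_\R \tilde N(t_*-v)b(v)\,dv$, hence $J:=\int_\R\tilde N(t_*-v)b(v)\,dv\ge 1$. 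The main obstacle is precisely here: $b(v)$ is the mean slope of $g_1$ and, because $g_1'(0)>1$, it can exceed $1$ near the lower equilibrium, so the estimate $J\ge 1$ is not in itself contradictory. To close the argument I would exploit the stability of the upper equilibrium together with the monotonicity of $\phi$: near $+\infty$ both profiles approach $\kappa$, where $g_1'(\kappa)=\delta/(|g'(\kappa)|+\delta)\in(0,1)$, so that $b<1$ there and the operator $u\mapsto \tilde N*(b\,u)$ is strictly contractive; combined with $\psi\le\kappa=\phi(+\infty)$ and the far-field asymptotics governed by the negative simple root of $\chi_\kappa$ guaranteed by $(h,c)\in\mathcal{D}_{\frak L}$, this should confine the minimizer $t_*$ to the contractive region and contradict $J\ge 1$ (equivalently, rule out a negative minimum of $w$). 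Propagating the sign of $w$ inward from $+\infty$ using the contractive character of $\kappa$ is thus the technical heart of the argument; an alternative route is to evaluate the differential form (\ref{ypa}) at $t_*$, where $w'(t_*)=0$, $w''(t_*)\ge 0$ and $\int_\R K(t_*-s)w(s-ch)\,ds\ge m$, and to extract a sign contradiction from the bound $\beta(s)\ge g'(\kappa)$ furnished by \textbf{(ST)}.
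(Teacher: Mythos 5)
Your reduction to the convolution equation $w=\tilde N*(bw)$ and the strong-positivity step (your ``nonlocal maximum principle'') are sound and coincide with the paper's inequality (\ref{pt}); likewise your observation that $g_1'(\kappa)=\delta/(|g'(\kappa)|+\delta)<1$ makes the operator contractive near $\kappa$ is exactly the ingredient the paper uses. But the proof is not complete: the step you flag as the ``technical heart'' --- confining the negative minimizer $t_*$ to the contractive region --- is asserted (``this should confine'') rather than proved, and it is precisely where the real work lies. Since $T$ is an arbitrary finite number, the set $\{w<0\}\subset[T,+\infty)$ may well meet the intermediate region where neither profile is close to $\kappa$; there $b$ can exceed $1$ (indeed $b$ need not even be bounded, since under \textbf{(M)}+\textbf{(ST)} the function $g_1$ is continuous and increasing but not Lipschitz), so your inequality $J\ge 1$ produces no contradiction. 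The paper closes this gap with a genuine sliding argument in two directions: first a \emph{vertical} shift, replacing $\psi-\phi$ by $w=\psi+a_*-\phi$ with $a_*=\inf\{a\ge 0:\psi+a\ge\phi\}$, so that the zero minimum is attained at a \emph{leftmost} point $t_m$ where $\psi(t_m)<\phi(t_m)$, forcing $t_m>T$; and then, when $T$ is not large enough for (\ref{blun}) to hold, a \emph{horizontal} translation $\psi(\cdot+\tau)$ (here the strict monotonicity of $\phi$ and the convergence of both profiles at $+\infty$ are used), with $\tau$ decreased to $\tau_*:=\inf\{\tau\ge0:\psi(\cdot+\tau)>\phi\}$ and the key fact that the touching points $T_\varepsilon$ of $\psi(\cdot+\tau_*-\varepsilon)$ and $\phi$ escape to $+\infty$ as $\varepsilon\to0^+$, which places the comparison inside the contractive zone and forces $\tau_*=0$. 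None of this machinery appears in your proposal. Note also that in the paper's Case I the tail of the integral is controlled by the crude bound $\kappa\int_Q^{+\infty}N(s-ch)\,ds<a_*(1-\gamma_1)$, which sidesteps any need for $b$ to be bounded.

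Your suggested ``alternative route'' via the differential form at $t_*$ also fails as stated: hypothesis \textbf{(ST)} furnishes only the one-sided bound $g(u)-g(v)\ge g'(\kappa)(u-v)$ for $u\ge v$ in $[0,\kappa]$. At points where $w(s)<0$ (i.e., $\psi<\phi$) this yields $g(\psi)-g(\phi)\le g'(\kappa)(\psi-\phi)$, an \emph{upper} bound, whereas your argument needs a lower bound on the integral term $\int_\R K(t_*-s)\bigl(g(\psi(s-ch))-g(\phi(s-ch))\bigr)ds$; without an additional upper Lipschitz estimate on $g$ (not assumed here --- indeed avoiding it is the whole point of the sliding method) no sign contradiction can be extracted at $t_*$.
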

\begin{proof} 
Set $a_* = \inf\mathcal{A}$ where
$$
\mathcal{A}: = \{a \geq 0: \psi(t)+a \geq \phi(t), \ t \in \R \}. 
$$
Note  that $\mathcal{A}\not=\emptyset$ since $[\kappa, +\infty) \subset \mathcal{A}$. Clearly, $a_* \in \mathcal{A}$.

Now, if $a_* =0$ then  $\psi(t) \geq \phi(t), \ t \in \R.$ We claim that, in fact, $\psi(t) > \phi(t), \ t \in \R.$
Indeed, otherwise we can suppose that $T$ is such that $\phi(T)=\psi(T)$.  In this way, the difference
$\psi(t)- \phi(t) \geq 0$ reaches its minimal value $0$ at $T$. Then, recalling that  $N(t)= -(1+\xi)v*K(t) >0$ for all $t\in \R$,  
we get  a contradiction: 
\begin{eqnarray}\label{pt}
0= \psi(T)-\phi(T) = \int_\R N(s-ch)(g_1(\psi(T-s))-g_1(\phi(T-s)))ds  >0. 
\end{eqnarray}
In this way,  Lemma \ref{l6} is proved when $a_* =0$ and, consequently, we have to consider the case  $a_* >0$. 
Let $\sigma >0$ be small enough to satisfy
$$\gamma_1:= \max_{s \in [\kappa-\sigma, \kappa+\sigma]} g_1'(s) < 1.$$
\underline{Case I.} 
First,  we take $Q >0$ such that $\kappa \int_Q^{+\infty}N(s-ch)ds<a_*(1-\gamma_1)$
and suppose  that $T$ is large enough to have  
\begin{equation}\label{blun}
\phi(t), \psi(t) \in (\kappa-\sigma, \kappa+\sigma), \ t \geq T-Q.
\end{equation} 
In such a case non-negative function 
$$w(t): = \psi(t) +a_* - \phi(t), \quad w(\pm \infty) = a_*>0, $$  
reaches its minimal value $0$  at some leftmost point $t_m$, where 
$
\psi(t_m) - \phi(t_m) =-a_*. 
$
Thus $\psi(t_m) < \phi(t_m)$ and therefore $t_m > T$ so that 
$$\psi(t_m-s),\ \phi(t_m-s) \in (\kappa-\sigma, \kappa+\sigma), \quad s \leq Q.$$
In consequence,  setting $\theta(s) \in (\kappa-\sigma, \kappa+\sigma)$,  we obtain
\begin{eqnarray}\label{pat}
\nonumber -a_*= \psi(t_m)-\phi(t_m) = \int_\R N(s-ch)(g_1(\psi(t_m-s))-g_1(\phi(t_m-s)))ds  = \\
\nonumber \left( \int_{-\infty}^Q  +  \int^{+\infty}_Q\right) N(s-ch)(g_1(\psi(t_m-s))-g_1(\phi(t_m-s)))ds> \\
-a_*(1-\gamma_1) + \int_{-\infty}^QN(s-ch)g'_1(\theta(s)) (\psi(t_m-s)- \phi(t_m-s))ds \geq \nonumber \\ -a_* +a_*\gamma_1 - \gamma_1a_*\int_{-\infty}^QN(s-ch)ds\geq -a_*, \nonumber \quad \mbox{a contradiction.}
\end{eqnarray}
\noindent \underline{Case II.} If (\ref{blun}) does not hold,  then, due to the convergence of profiles at $+\infty$ and the strict monotonicity of $\phi$,  we can find large $\tau>0$ and $T_1 > T$  such that 
$$
\psi(t+\tau) > \phi(t),\  t < T_1, \quad \phi(t),\psi(t+\tau) \in (\kappa-\sigma, \kappa+\sigma), \ t \geq T_1-Q.  
$$
Therefore, in view of the  previous arguments, we obtain that 
 \begin{equation}\label{bla}
\psi(t+\tau) > \phi(t), \quad t \in \R.   
\end{equation} 
Define now $\tau_*$ by
$$
\tau_*:= \inf \{\tau \geq 0: \ {\rm inequality} \ (\ref{bla}) \ {\rm holds}\}. 
$$
It is clear that $\psi(t+\tau_*) \geq \phi(t), \ t \in \R$. Now, using the same argument as in  (\ref{pt}), we conclude that either 
$\psi(t+\tau_*) \equiv \phi(t)$ with $\tau_*=0$ (a contradiction) or 
$\psi(t+\tau_*) > \phi(t), \ t \in \R$. 
In the latter  case, if  $\tau_*=0$, then Lemma \ref{l6} is  proved.  
Otherwise, $\tau_* >0$ and for each $\varepsilon \in (0,\tau_*)$ there exists a unique $T_\varepsilon > T$ such that 
$$
\psi(t+\tau_*-\varepsilon) > \phi(t), \ t <T_\varepsilon, \  \psi(T_\varepsilon+\tau_*-\varepsilon) = \phi(T_\varepsilon). 
$$
It is immediate to see that $\lim T_\varepsilon = +\infty$ as $\epsilon \to 0^+$.  Indeed, if $T_{\varepsilon_j} \to T'$  for some finite $T'$ and $\varepsilon_j \to 0^+$, then we get a contradiction: $\psi(T'+\tau_*) = \phi(T')$.  Therefore, if $\varepsilon$ is small, then 
$$
\psi(t+\tau_*-\varepsilon), \phi(t) \in (\kappa-\sigma, \kappa+\sigma), \ t \geq T_\varepsilon-Q, 
$$
that is  $\psi(t+\tau_*-\varepsilon) $ and $\phi(t)$ satisfy condition (\ref{blun}).  But then  we get 
$\psi(t+\tau_*-\varepsilon) > \phi(t)$ for all $t \in \R$, in contradiction to the definition of $\tau_*$.  This means that 
$\tau_* =0$ and the proof of Lemma \ref{l6} is  completed.   \hfill $\square$
\end{proof}
\begin{corollary} \label{coco}Fix some $(c,h)$ in the closure of $\mathcal{D}_{0}$ and suppose that equation 
(\ref{yp}) possesses  two monotone wavefronts $\phi$ and $\psi$. Then there exists $s_0, s_1\in \R$ and $j \in \{0,1\}$ such that either $\lim_{s\to -\infty}\phi(s+s_0)e^{-\mu_js} = 1, $ $\lim_{s\to -\infty}\psi(s+s_1)e^{-\mu_js} = 1, $ or $\lim_{s\to -\infty}\phi(s+s_0)e^{-\mu_js} s^{-1}= -1, $ $\lim_{s\to -\infty}\psi(s+s_1)e^{-\mu_js} s^{-1}= -1$.  
\end{corollary}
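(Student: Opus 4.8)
The plan is to reduce the statement to a precise description of the behaviour of each monotone front near the unstable equilibrium $0$, and then to prove that the two fronts must exhibit the \emph{same} leading behaviour. Since $\phi$ and $\psi$ both solve (\ref{yp}) with the same $(c,h)$, the relevant linearisation at $0$ has the single characteristic function $\chi_0$, whose positive zeros are $\mu_0\le\mu_1$ (distinct in the interior of $\mathcal{D}_0$, coalescing into a double zero on its boundary). Both steps rest on the fact that $\phi,\psi$ are positive, strictly increasing, and tend to $0$ at $-\infty$, together with the H\"older-type control (\ref{gco}).

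First I would establish, for a single monotone front $\phi$, the asymptotic dichotomy. Working with a convolution representation of the profile near $-\infty$ and writing $g_1(u)=g_1'(0)u+R(u)$, hypothesis (\ref{gco}) yields $|R(u)|\le C'u^{1+\theta}$ for small $u$, so $\phi$ solves near $-\infty$ a linear convolution equation (with symbol governed by $\chi_0$) perturbed by a term negligible in the Tauberian sense. Because $\phi\ge 0$ and $\phi(-\infty)=0$, the Laplace transform $\int_{-\infty}^{0}e^{-zt}\phi(t)\,dt$ has a finite abscissa of convergence $\mu>0$ which, by Widder's theorem on transforms of non-negative functions \cite{WID}, is a real singularity; the convolution relation forces $\mu$ to be a positive zero of $\chi_0$, hence $\mu\in\{\mu_0,\mu_1\}$ by the structure of $\overline{\mathcal{D}}_0$. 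The Ikehara--Delange Tauberian theorem (whose Tauberian side is supplied by monotonicity; cf. the asymptotic analysis in \cite{AGT,SEDY}) then gives the leading term: $\phi(t)\sim Ae^{\mu t}$ when $\mu$ is a simple zero, and $\phi(t)\sim A|t|e^{\mu t}$ when $\mu=\mu_0=\mu_1$ is the double zero on the boundary, with $A>0$. The identical conclusion holds for $\psi$.

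The decisive step is to exclude differing leading behaviours. Suppose $\phi$ and $\psi$ have different asymptotics; then, since the two admissible behaviours (either $e^{\mu_0t}$ versus $e^{\mu_1t}$, or $e^{\mu_0t}$ versus $|t|e^{\mu_0t}$) are strictly ordered by decay speed at $-\infty$, one of them, say $\phi$, decays strictly faster, i.e. $\phi(t)/\psi(t)\to 0$ as $t\to-\infty$. For every $\tau\ge 0$ the translate $\phi(\cdot+\tau)$ is again a monotone profile solving (\ref{yp}), and since a bounded shift does not alter the decay rate we still have $\phi(t+\tau)<\psi(t)$ for all sufficiently negative $t$. Applying Lemma \ref{l6} with the monotone front $\phi(\cdot+\tau)$ in the role of $\phi$ upgrades this to $\phi(t+\tau)<\psi(t)$ for all $t\in\R$ and every $\tau\ge 0$. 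Letting $\tau\to+\infty$ with $t$ fixed gives $\kappa=\lim_{\tau\to+\infty}\phi(t+\tau)\le\psi(t)$ for all $t$, contradicting $\psi(t)<\kappa$. Hence $\phi$ and $\psi$ share the same exponent $\mu_j$, $j\in\{0,1\}$, and the same polynomial order.

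Finally, with the common asymptotics $\phi(t)\sim A_\phi e^{\mu_jt}$ and $\psi(t)\sim A_\psi e^{\mu_jt}$ (or their $|t|$-counterparts), I would simply set $s_0=-\mu_j^{-1}\ln A_\phi$ and $s_1=-\mu_j^{-1}\ln A_\psi$ to normalise the leading coefficients to $1$; in the double-root case the identity $|s|=-s$ for $s\to-\infty$ accounts for the value $-1$ in the stated $s^{-1}$-normalisation. I expect the first step, namely running the Tauberian machinery rigorously on the perturbed nonlinear convolution equation and pinning down whether the rightmost singularity is a simple or double pole, to be the main technical obstacle; the second step, which carries the genuinely new content of the ``common exponent'' claim, is short once Lemma \ref{l6} is in hand.
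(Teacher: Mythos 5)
Your proposal is correct and follows essentially the same route as the paper: the single-profile dichotomy is obtained, as in the paper, by running the Diekmann--Kaper/Laplace-transform continuation argument on the convolution equation (\ref{cn}) (the paper cites Step I of \cite[Theorem 3]{AGT} and concludes with the Ikehara theorem \cite[Proposition 2.3]{CC}, using positivity and monotonicity exactly where you invoke Widder and Ikehara--Delange, with the simple versus double pole at $\mu_j$ producing the two normalisations). Your second step is also the paper's: assuming different decay rates, apply Lemma \ref{l6} to arbitrary translates of the faster-decaying profile to get $\psi(t+\tau)<\phi(t)$ (respectively $\phi(t+\tau)<\psi(t)$) for all $t$ and all $\tau$, which is absurd since both profiles range over $(0,\kappa]$ with limits $0$ and $\kappa$.
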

\begin{proof}  First, we prove that every profile satisfies one of the given asymptotic formula, with $j$ which 
might depend on the profile.  For definiteness, we will take profile $\phi$. We are going to apply some results of  \cite{AGT} to the convolution equation (\ref{cn}). 
It follows from (\ref{sc}) that the set $\{z: \sigma_K < \Re z < \gamma_K\}$, where $\sigma_K =\lambda_1(\xi), \gamma_K =  \lambda_0(\xi)$, is  the maximal open strip  of convergence for the 
Laplace transform of $N$, cf. 
\cite[Theorem 16b]{WID}.  Moreover, 
$$
\lim_{x \to \gamma_K-}\int_\R N(s)e^{-sx}ds =+\infty \ \mbox{and, in virtue of (\ref{ficoR}), } \ N(s) = O(e^{\lambda_0(\xi)s}), \ s \to -\infty.  
$$ 
Therefore, using condition (\ref{gco}) and a standard argument of the Diekmann-Kaper approach (cf. Step I of the proof of Theorem 3 in \cite{AGT}), we find that, for some $j, k \in \{0,1\}$ and $\rho>0$, the Laplace transform $\int_\R\phi(s)e^{-zs}ds$ is analytic in 
the strip $0<\Re z < \mu_j$, has a singularity at $\mu_j$, and satisfies 
$$
\frac{\chi_0(z)}{\chi(z,\xi)}\int_\R\phi(s)e^{-zs}ds = D(z), 
$$
where $D(z)$ is analytic in a bigger strip $0<\Re z < \mu_j+\rho$. Since clearly $\Phi_+(z):= \int_0^{+\infty}\phi(s)e^{-zs}ds$ is analytic in the half-plane $\{\Re z>0\}$, we conclude that the function 
$
Q(z):= D(z) \chi(z,\xi)/\chi_0(z) - \Phi_+(z)
$
is meromorphic in $0<\Re z < \mu_j+\rho$, where it has a unique singularity (a simple or double pole) at $\mu_j$. Since  
$
\Phi_-(z):= \int_{-\infty}^0e^{-sz}\phi(s)ds= Q(z)
$
for $\Re z \in (0, \mu_j)$
and $\phi(s)$ is positive and non-decreasing on $\R_-$, an application of the Ikehara theorem \cite[Proposition 2.3]{CC} yields 
the required asymptotic formula. 

Finally, we claim that $\phi$ and $\psi$ have the same asymptotic behavior at $-\infty$. 
For example, suppose that $\phi(t) \sim e^{\mu_0 t}$ and $\psi(t) \sim e^{\mu_1 t}$ as $t \to -\infty$. 
Then for every fixed $\tau \in \R$ there exists $T(\tau)$ such that $\psi(t+\tau) < \phi(t)$ for all $t < T(\tau)$.  Applying 
Lemma \ref{l6}, we obtain that  
 $\psi(s) < \phi(t)$ for every $s:=t+\tau, \ t  \in \R,$ what obviously is false. \hfill $\square$
\end {proof}
Now we are in position to finalise the proof of Theorem \ref{main2}.  By Corollary \ref{coco}, we can suppose that $\psi(t)$ and $\phi(t)$ have the same type of asymptotic behavior at $-\infty$. 
Consequently,  $\psi(t+\tau), \phi(t)$ satisfy condition (\ref{nado}) of  Lemma \ref{l6} for every small $\tau >0$. But then 
 $\psi(t+\tau) >  \phi(t)$ for every small $\tau >0$ that yields   $\psi(t) \geq  \phi(t), \ t \in \R$. By symmetry,  we also find that 
$\phi(t) \geq  \psi(t), \ t \in \R$, and Theorem \ref{main2} is proved. \hfill $\square$

\section{Proof of Theorem \ref{mainPF}} \label{S5}

We will show  that conditions of Theorem \ref{mainPF} assure that each semi-wavefront  $u = \phi(x+ct)$, $(h,c)\in {\mathcal{D}}_{\frak L},$ of equation (\ref{17b}) is actually a monotone wavefront. Indeed, it is easy see 
that $0 < \phi(t) < \kappa,\ t \in \R,$ since otherwise, without loss of generality, we can  assume that $\phi(t_0)= \max_{s \in \R} \phi(s)$ for some $t_0$ that leads to the following contradiction:
$$\kappa \leq \phi(t_0)= \max_{s \in \R} \phi(s)=\int_\R N(t_0-s)g_1(\phi(s-ch))ds< \int_\R N(t_0-s)\max_{s\in \R}g_1(\phi(s))ds \leq \phi(t_0).$$
Next, we will need the following 
\begin{lemma} \label{v3} Set  $\Gamma(s):= g_1(\phi(s-ch))$. If the semi-wavefront $\phi(t)$ is increasing on $\R_-$ and satisfies
$\phi'(0) =0$  then, for  $t\in [0,ch]$, 
\begin{eqnarray}\nonumber
 \phi'(t) = \int_{-\infty}^0(N(t-s)-e^{\lambda_0(\xi)t} N(-s))d\Gamma(s)+  \int^{t}_0(N(t-s)-e^{\lambda_0(\xi)(t-s)}N(0))d\Gamma(s).
\end{eqnarray}
\end{lemma}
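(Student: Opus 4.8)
The plan is to differentiate the convolution identity $\phi = N*\Gamma$ and then exploit the explicit exponential shape of $N$ on $\R_-$ recorded in Remark \ref{Re7}. For the local equation (\ref{17b}), Theorem \ref{ar} reduces the profile equation to $\phi(t)=\int_\R N(t-s)g_1(\phi(s-ch))ds=(N*\Gamma)(t)$ with $N(u)=-(1+\xi)v(u,\xi)$; the crucial structural fact is that for $u\le 0$ one has the closed form $N(u)=N(0)e^{\lambda_0(\xi)u}$, where $N(0)=(1+\xi)/\chi'(\lambda_0(\xi))>0$ since $\lambda_0(\xi)$ is the smallest positive root of $\chi$ and $\chi'(\lambda_0(\xi))>0$.

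First I would establish the Stieltjes representation $\phi'(t)=\int_\R N(t-s)\,d\Gamma(s)$. Since $N\in W^{1,1}(\R)$ and $\Gamma$ is bounded and continuous, Lemma \ref{Le3} already gives $\phi\in C_b^1(\R)$ with $\phi'=N'*\Gamma$; because $\phi$ is increasing on $\R_-$ and $g_1$ is non-decreasing, $\Gamma(s)=g_1(\phi(s-ch))$ is non-decreasing (hence of bounded variation) on $(-\infty,ch)$, so an integration by parts converts $N'*\Gamma$ into $\int_\R N(t-s)\,d\Gamma(s)$, the boundary terms vanishing by the exponential decay of $N$.

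Next, I would fix $t\in[0,ch]$ and split this integral over $(-\infty,0)$, $(0,t)$ and $(t,+\infty)$. On the last interval $t-s<0$, so the exponential form gives
$$
\int_t^{+\infty}N(t-s)\,d\Gamma(s)=N(0)e^{\lambda_0(\xi)t}\int_t^{+\infty}e^{-\lambda_0(\xi)s}\,d\Gamma(s),
$$
an integral which converges because $\lambda_0(\xi)>0$ and $\Gamma$ is bounded (integrate by parts once more to reduce it to an absolutely convergent ordinary integral). Then I would bring in the hypothesis $\phi'(0)=0$: writing $\phi'(0)=\int_\R N(-s)\,d\Gamma(s)$ and using $N(-s)=N(0)e^{-\lambda_0(\xi)s}$ for $s>0$ yields
$$
N(0)\int_0^{+\infty}e^{-\lambda_0(\xi)s}\,d\Gamma(s)=-\int_{-\infty}^0 N(-s)\,d\Gamma(s).
$$
Substituting $\int_t^{+\infty}=\int_0^{+\infty}-\int_0^t$ in the previous display and eliminating $\int_0^{+\infty}e^{-\lambda_0(\xi)s}\,d\Gamma(s)$ by this identity, the $(t,+\infty)$ contribution becomes $-e^{\lambda_0(\xi)t}\int_{-\infty}^0 N(-s)\,d\Gamma(s)-N(0)\int_0^t e^{\lambda_0(\xi)(t-s)}\,d\Gamma(s)$; grouping it with the $(-\infty,0)$ and $(0,t)$ pieces produces exactly the asserted formula.

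The main obstacle is the rigor of the first step, namely justifying the Stieltjes differentiation formula $\phi'(t)=\int_\R N(t-s)\,d\Gamma(s)$ in the presence of the corner of $N$ at the origin and the fact that $\Gamma$ need not be $C^1$ (nor monotone beyond $ch$). I expect to control this by the integration-by-parts argument above, which needs only that $N$ is continuous and exponentially decaying while $\Gamma$ is bounded and locally of bounded variation, so that all improper integrals — in particular $\int_t^{+\infty}e^{-\lambda_0(\xi)s}\,d\Gamma(s)$ — are well defined. Once this representation is secured, the remainder is pure bookkeeping driven by the single fact that $N$ reduces to a pure exponential on $\R_-$.
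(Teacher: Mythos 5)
Your proposal is correct and follows essentially the same route as the paper's proof: differentiate the convolution identity $\phi=N*\Gamma$, convert to Riemann--Stieltjes form by integration by parts, exploit the pure exponential shape $N(u)=N(0)e^{\lambda_0(\xi)u}$, $u\leq 0$, from Remark \ref{Re7}, and eliminate the tail via $\phi'(0)=0$ through exactly the paper's identity $N(0)\int_0^{+\infty}e^{-\lambda_0(\xi)s}\,d\Gamma(s)=-\int_{-\infty}^0N(-s)\,d\Gamma(s)$, followed by the same regrouping. The only (minor, harmless) divergence is technical: where you define the Stieltjes integral over $(t,+\infty)$ -- where $\Gamma$ is no longer monotone -- by one more integration by parts against the bounded continuous $\Gamma$, the paper instead establishes that $\Gamma$ has bounded variation on every interval $(-\infty,\alpha]$, by observing that $g$ has bounded variation on $[0,\kappa]$ and that $\phi$ has at most finitely many critical points on such intervals (via \cite[Remark 9(2)]{TT} together with Corollary \ref{coco}).
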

\begin{proof}  
Since $\Gamma(s)$ increases on $(-\infty, ch]$ and $\Gamma(-\infty)=0$, all Riemann-Stieltjes  integrals in the above formula are well defined and convergent. Next, note that  $g(s) = g_1(s)(1+\xi) - \xi s$ is of bounded variation on $[0,\kappa]$.  Thus, using \cite[Remark 9(2)]{TT} together with Corollary \ref{coco}, we conclude that $\phi(t)$ can have at most a finite number of critical points on each  interval $(-\infty, \alpha]$.  This implies that $\Gamma(s)$ has bounded variation on each $(-\infty, \alpha]$. 
Next, in view of Remark \ref{Re7},  after integrating by parts, we find  that
$$
\phi'(t)= \int_{-\infty}^tN'(t-s)\Gamma(s)ds+ \int^{+\infty}_t\lambda_0(\xi)N(0)e^{\lambda_0(\xi)(t-s)}\Gamma(s)ds=
$$
$$
 \int_{-\infty}^tN(t-s)d\Gamma(s)+ \int^{+\infty}_tN(0)e^{\lambda_0(\xi)(t-s)}d\Gamma(s)= $$
 $$\int_{-\infty}^tN(t-s)d\Gamma(s)+ e^{\lambda_0(\xi)t}N(0) \left(\int^{+\infty}_0e^{-\lambda_0(\xi)s}d\Gamma(s)-\int^{t}_0e^{-\lambda_0(\xi)s}d\Gamma(s)\right). 
$$
Since $\phi'(0)=0$, it holds  that 
 $$N(0)\int^{+\infty}_0e^{-\lambda_0(\xi)s}d\Gamma(s)=-\int_{-\infty}^0N(-s)d\Gamma(s)
$$
and therefore
$$
\phi'(t)= \int_{-\infty}^tN(t-s)d\Gamma(s)+ e^{\lambda_0(\xi)t} \left(-\int_{-\infty}^0N(-s)d\Gamma(s)-N(0)\int^{t}_0e^{-\lambda_0(\xi)s}d\Gamma(s)\right)=
$$
$$
\int_{-\infty}^0(N(t-s)-e^{\lambda_0(\xi)t} N(-s))d\Gamma(s)+  \int^{t}_0(N(t-s)-N(0)e^{\lambda_0(\xi)(t-s)})d\Gamma(s). \quad \square
$$
\end{proof}
\begin{thm} Assume   {\rm \bf(M)}, {\rm \bf(K)} and {\rm \bf(ST)}.  Then each semi-wavefront  $u = \phi(x+ct)$, $(h,c)\in {\mathcal{D}}_{\frak L},$ of equation (\ref{17b}) is a monotone wavefront. 
\end{thm}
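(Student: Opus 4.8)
The plan is to push everything through the convolution equation and then to localize the analysis at the leftmost critical point of the profile. First I would invoke Theorem~\ref{ar} (which applies verbatim to the Dirac kernel $K$) to rewrite the profile problem as $\phi=\mathcal N\phi$, where $\mathcal N\phi(t)=\int_\R N(t-s)g_1(\phi(s-ch))\,ds$, $N>0$, $\int_\R N=1$ and, by {\rm\bf(ST)}, $g_1$ is nondecreasing. The decisive local feature is the one recorded in Remark~\ref{Re7}: $N$ has an \emph{exact} exponential left tail, $N(t)=N(0)e^{\lambda_0(\xi)t}$ for $t\le0$. Equivalently, the function $m(t):=e^{-\lambda_0(\xi)t}N(t)$ is constant on $\R_-$ and, since a one-line computation with the explicit $v$ gives $m'(0^+)=-(1+\xi)<0$, strictly decreasing on $\R_+$. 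Using the already established bounds $0<\phi<\kappa$, the finiteness of the critical points of $\phi$ on each half-line $(-\infty,\alpha]$ (from \cite{TT}) and an Ikehara argument as in Corollary~\ref{coco}, I would show that $\phi$ is strictly increasing near $-\infty$. If $\phi$ is increasing on all of $\R$ there is nothing to prove, so I assume it possesses a leftmost critical point and, after a translation, place it at the origin: $\phi$ is strictly increasing on $(-\infty,0]$ and $\phi'(0)=0$.

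The core step is to feed this configuration into Lemma~\ref{v3}. Because $\phi$ increases on $\R_-$ and $g_1$ is nondecreasing, $\Gamma(s)=g_1(\phi(s-ch))$ is nondecreasing on $(-\infty,ch]$, so $d\Gamma\ge0$ there. Writing the two kernels of Lemma~\ref{v3} as $N(t-s)-e^{\lambda_0 t}N(-s)=e^{\lambda_0(t-s)}\big(m(t-s)-m(-s)\big)$ and $N(t-s)-N(0)e^{\lambda_0(t-s)}=e^{\lambda_0(t-s)}\big(m(t-s)-m(0)\big)$, the monotonicity of $m$ makes both of them $\le0$ for $0\le t\le ch$ (the larger argument always carries the smaller value of $m$). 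Hence $\phi'(t)\le0$ on $[0,ch]$, with strict inequality on $(0,ch]$; in other words, the leftmost critical point of $\phi$ is necessarily a strict local maximum.

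It then remains to turn this into a global contradiction. If $0$ were the only critical point, then $\phi'<0$ on $(0,+\infty)$, so $\phi$ would decrease to a limit $\phi(+\infty)\le\phi(0)<\kappa$ which, being a root of $g_1(s)=s$, could only be $0$ — impossible for a semi-wavefront with $\liminf_{+\infty}\phi>0$. Thus the entire difficulty is concentrated in excluding a \emph{subsequent local minimum}, and I expect this to be the main obstacle. As soon as one passes $t=ch$ the delayed argument $\phi(\cdot-ch)$ samples the interval where $\phi$ has already begun to decrease, so $d\Gamma$ changes sign and the clean kernel-sign computation above breaks down; indeed a naive induction over the intervals $[n\,ch,(n+1)ch]$ fails for exactly this reason. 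This is the point where the local structure is irreplaceable: the exact exponential tail of $N$ has no counterpart for a general kernel $K$, which is precisely why non-monotone semi-wavefronts can persist in the non-local model.

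To overcome it I would proceed as follows. By Lemma~\ref{L12} applied to $\chi_\kappa=\chi(\cdot,|g'(\kappa)|)$ (note $|g'(\kappa)|<\xi^\ast$), the strip $\{\lambda_1(|g'(\kappa)|)<\Re z<0\}$ carries no zeros, so a convergent profile must approach $\kappa$ monotonically; this removes oscillatory accumulation at $\kappa$. Combining this non-oscillation with a sliding comparison against the monotone front $\psi$ produced by Theorem~\ref{mai1}, Lemma~\ref{l6} allows one to slide $\psi$ until its graph first touches that of $\phi$, and the exponential tail should force that contact to occur at a finite point, where the positivity of $N$ and the strict monotonicity of $g_1$ in the convolution identity give $\phi\equiv\psi(\cdot+s_0)$. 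Since $\psi$ is monotone, this contradicts the strict local maximum found above, so $\phi$ can have no critical point at all and is therefore strictly increasing; uniqueness up to translation is then immediate from Theorem~\ref{main2}.
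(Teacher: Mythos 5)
Up to and including the kernel-sign computation your argument is the paper's own: the reduction to $\phi=\mathcal{N}\phi$, the exact exponential left tail of $N$ from Remark~\ref{Re7}, and Lemma~\ref{v3} are used in exactly the same way. Your function $m(t)=e^{-\lambda_0(\xi)t}N(t)$ is merely a repackaging of the inequalities the paper uses, $N(t-s)<N(-s)<e^{\lambda_0(\xi)t}N(-s)$ for $s\le 0<t$ and $N(t-s)<N(0)<N(0)e^{\lambda_0(\xi)(t-s)}$ for $s<t$ (both valid because $N>0$ peaks at $0$ and decreases on $\R_+$), and your persistence argument ruling out $\phi'\le 0$ on all of $(0,+\infty)$ is also the paper's. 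So the first three paragraphs are sound and essentially coincide with Section~\ref{S5}.

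The genuine gap is your closing paragraph. The paper never slides against the monotone front of Theorem~\ref{mai1}; it excludes the subsequent local minimum by a short pointwise argument that your proposal misses entirely. Let $(0,\sigma_1)$ be the maximal interval on which $\phi'\le 0$; Lemma~\ref{v3} gives $\sigma_1\ge ch$, and persistence gives $\sigma_1<\infty$. Then $\phi'(\sigma_1)=0$, $\phi''(\sigma_1)\ge 0$, and since $\phi$ is nonincreasing on $(0,\sigma_1)$ with $\sigma_1-ch\in[0,\sigma_1)$, also $\phi(\sigma_1)\le \phi(\sigma_1-ch)$. Evaluating the \emph{local} profile equation $\phi''(t)-c\phi'(t)-\phi(t)+g(\phi(t-ch))=0$ at $t=\sigma_1$ yields $\phi(\sigma_1)=\phi''(\sigma_1)+g(\phi(\sigma_1-ch))\ge g(\phi(\sigma_1-ch))>\phi(\sigma_1-ch)\ge\phi(\sigma_1)$, the strict middle inequality because {\rm \bf(M)} forces $g(u)>u$ on $(0,\kappa)$ and $0<\phi<\kappa$. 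This contradiction — the second place, besides the exponential tail, where locality of (\ref{17b}) is indispensable — is the missing idea; your proposal never uses $g(u)>u$. Your substitute route does not close as written: Lemma~\ref{l6} is stated for, and its proof (condition (\ref{blun}), Cases I--II) genuinely requires, two \emph{wavefront} profiles converging to $\kappa$ at $+\infty$, whereas for a semi-wavefront convergence at $+\infty$ is precisely what is unknown, so invoking it here is circular; the claim that ``a convergent profile must approach $\kappa$ monotonically'' both presupposes that convergence and does not follow from Lemma~\ref{L12} alone without an asymptotic analysis at $+\infty$; and the final appeal to Theorem~\ref{main2} imports hypotheses ($C^1$-smoothness near $\kappa$ and condition (\ref{gco})) that the present theorem does not assume.
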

\begin{proof} From \cite[Lemma 6]{TT}, we know that $\phi'(t)>0$ on some maximal interval $(-\infty, \sigma)$. If $\sigma=+\infty$, the corollary is proved. If $\sigma$ is finite, without loss of generality we may take 
$\sigma=0$.  Then $\Gamma(t) =
g_1(\phi(t-ch))$ is strictly increasing on $(-\infty, ch)$. But then Lemma \ref{v3} implies that $\phi'(t) \leq 0$ for all $t \in
(0,ch]$. Here, we are using the inequalities
$$
N(t-s) < N(-s) < e^{\lambda_0(\xi)t} N(-s), \ s \leq 0 <t; \ N(t-s) <N(0) <N(0)e^{\lambda_0(\xi)(t-s)},\  s< t.
$$
Thus $\phi'(t) \leq 0$ on some maximal interval $(0,\sigma_1)$. Note
that $\sigma_1$ must be a finite real number since otherwise
$\phi'(t) \leq  0$ on $(0,+\infty)$ implying $\phi(+\infty) =0$.
However, this contradicts the uniform persistence property of semi-wavefronts \cite{SEDY}. In consequence, $\sigma_1
> ch$ is finite so that $\phi'(\sigma_1) = 0, \
\phi''(\sigma_1) \geq 0$ and $\phi(\sigma_1) \leq 
\phi(\sigma_1-ch)$. On the other hand, we know that $\phi''(t) -c\phi'(t)-\phi(t) + g(\phi(t-ch))=0
$ for all  $t \in \R$ so that 
$$
\phi''(\sigma_1) - \phi(\sigma_1)+
g(\phi(\sigma_1-ch))=0,
$$
from which we obtain $\kappa > \phi(\sigma_1-h) \geq 
\phi(\sigma_1)\geq g(\phi(\sigma_1-h))>0$, a contradiction.  \hfill $\square$
\end{proof}

\section*{Acknowledgments}  \noindent   The authors express their  appreciation to
Professor Rafael Ortega whose insightful  suggestions helped to improve the original version of the paper.   
In particular, Subsection \ref{Sec22} is due to  these suggestions. 
 Especially we  would like to acknowledge  FONDECYT (Chile), project 1110309 for supporting  the research stay  of  Dr.  R. Ortega at the University of Talca.
This work was also supported by FONDECYT (Chile), 
projects 1120709 (E. Trofimchuk and M. Pinto) and 1150480 (S. Trofimchuk).

\vspace{3mm}

\end{document}